\newtheorem{tm}{Theorem}
\newtheorem{defi}{Definition}
\newtheorem{rem}{Remark}
\newtheorem{lm}{Lemma}
\newtheorem{nota}{Notation}
\begin{document}

\title{Polynomials, sign patterns and Descartes' rule of signs}
\author{Vladimir Petrov Kostov}
\address{Universit\'e C\^ote d’Azur, CNRS, LJAD, France} 
\email{vladimir.kostov@unice.fr}
\begin{abstract}
By Descartes' rule of signs, a real degree $d$ polynomial $P$ with all 
nonvanishing coefficients, with $c$ sign changes and $p$ sign preservations 
in the sequence of its coefficients ($c+p=d$) has $pos\leq c$ positive and 
$neg\leq p$ negative roots, where $pos\equiv c($\, mod $2)$ and 
$neg\equiv p($\, mod $2)$. For $1\leq d\leq 3$, 
for every possible choice of the 
sequence of signs of coefficients of $P$ (called sign pattern) 
and for every pair 
$(pos, neg)$ satisfying these conditions there exists a polynomial $P$ 
with exactly 
$pos$ positive and exactly $neg$ negative roots (all of them simple). For 
$d\geq 4$ this is not so. It was observed that for $4\leq d\leq 10$, in all 
nonrealizable cases either $pos=0$ or $neg=0$. It was conjectured that this 
is the case for any $d\geq 4$. We show a counterexample to this conjecture 
for $d=11$. 
Namely, we prove that for the sign pattern $(+,-,-,-,-,-,+,+,+,+,+,-)$ and 
the pair $(1,8)$ there exists no polynomial with $1$ positive, $8$ 
negative simple roots and a complex conjugate pair. 

{\bf Key words:} real polynomial in one variable; sign pattern; Descartes' 
rule of signs\\ 

{\bf AMS classification:} 26C10; 30C15
\end{abstract}
\maketitle 

\section{Introduction}

The classical Descartes' rule of signs says that the real polynomial 
$P(x,a):=x^d+a_{d-1}x^{d-1}+\cdots +a_0$ has not more positive roots 
than the number 
$c$ of sign changes in the sequence of its coefficients. This rule has 
been announced by Ren\'e Descartes (1596-1650)
in his work La G\'eom\'etrie published in 1637. In 1828 Carl Friedrich Gauss 
(1777-1855) has shown that if the roots are counted with multiplicity, then 
the number of 
positive roots has the same parity as $c$. When applied to 
$P(-x)$, these results give an upper bound on the number of negative 
roots of $P$. It is proved in \cite{AlFu} that all possible 
cases (i.e. of $c$, $c-2$, $c-4$, $\ldots$ positive roots) are realizable by 
suitably chosen polynomials $P$ with $c$ sign changes. Notice that here we 
do not impose restrictions on the number of negative roots. 

In what follows we consider polynomials $P$ without zero coefficients. 
Denoting by $p$ the number of sign preservations in the sequence of 
coefficients of $P$, and by $pos_P$ (resp. $neg_P$) the number of 
positive and negative roots of $P$, one can write: 

\begin{equation}\label{posneg}
pos_P\leq c~~~,~~~pos_P\equiv c\, (\, {\rm mod~}2)~~~,~~~neg_P\leq p~~~,~~~
neg_P\equiv p\, (\, {\rm mod~}2)~. 
\end{equation}
We call {\em sign pattern} a finite sequence $\sigma$ of $\pm$-signs; 
we assume that the leading sign of $\sigma$ is $+$. 
For a given 
sign pattern of length $d+1$ with $c$ sign changes and $p$ sign preservations 
we call $(c,p)$ its {\em Descartes pair}, $c+p=d$. 
For a given sign pattern $\sigma$ with Descartes pair $(c,p)$ we call 
$(pos, neg)$ an {\em admissible pair} for $\sigma$ if conditions 
(\ref{posneg}), with $pos_P=pos$ and $neg_P=neg$, are 
satisfied. 

One could ask the question whether given a sign pattern $\sigma$ 
of length $d+1$ 
and an admissible pair $(pos, neg)$ one can find a degree $d$ real monic 
polynomial the signs of whose coefficients define the sign pattern $\sigma$ 
and which has exactly $pos$ simple positive and exactly $neg$ simple negative 
roots. In such a case we say that the couple $(\sigma ,(pos, neg))$ 
is {\em realizable}. 

It 
turns out that for $d=1$, $2$ and $3$ the answer is positive, but for $d=4$ 
the answer is negative; this is due to D.~J.~Grabiner, see \cite{Gr}. 
Namely, for the sign pattern 
$\sigma ^*:=(+,+,-,+,+)$ (with Descartes pair $(2,2)$), 
the pair $(2,0)$ is admissible, 
see (\ref{posneg}), but the couple $(\sigma ^*,(2,0))$ is not realizable. The 
proof of this is easy -- for a monic polynomial $P_5:=x^5+a_4x^4+\cdots +a_0$ 
with signs of the coefficients 
defined by $\sigma ^*$ and having exactly two 
positive roots $u<v$ one has $a_j>0$ for 
$j\neq 2$, $a_2<0$ and $P_5((u+v)/2)<0$. Hence $P_5(-(u+v)/2)<0$ because 
$a_j((u+v)/2)^j=a_j(-(u+v)/2)^j$, $j=0$, $2$, $4$ and 
$0<a_j((u+v)/2)^j=-a_j(-(u+v)/2)^j$, $j=1$,~$3$. As $P(0)=a_0>0$, there are two 
negative roots $\xi <-(u+v)/2<\eta$ as well. 

Modulo the standard $\mathbb{Z}_2\times \mathbb{Z}_2$-action described below, 
Grabiner's example is the only one of nonrealizable couple (sign pattern, 
admissible pair) for $d=4$. The $\mathbb{Z}_2\times \mathbb{Z}_2$-action 
is defined on such couples by two generators. Denote by $\sigma (j)$ the 
$j$th component of the sign pattern $\sigma$. The 
first of the generators replaces the sign pattern $\sigma$ by 
$\sigma ^r$, where $\sigma ^r$ stands for the reverted (i.e. read from 
the back) sign pattern multiplied by $\sigma (0)$, and keeps the same pair 
$(pos, neg)$. This generator corresponds to the fact that the polynomials 
$P(x)$ and $x^dP(1/x)/P(0)$ are both monic and have the same numbers of 
positive and negative roots. The second generator exchanges $pos$ with $neg$ 
and changes the signs of $\sigma$ corresponding to the monomials of odd 
(resp. even) powers if $d$ is even (resp. odd); the rest of the signs 
are preserved. We denote the new sign pattern 
by $\sigma _m$. The generator corresponds to the fact 
that the roots of the polynomials (both monic) $P(x)$ and 
$(-1)^dP(-x)$ are mutually opposite, and if $\sigma$ is the sign pattern of 
$P$, then $\sigma _m$ is the one of $(-1)^dP(-x)$. For a given sign pattern 
$\sigma$ and an admissible pair $(pos, neg)$ the couples 
$(\sigma , (pos, neg))$, $(\sigma ^r, (pos, neg))$, $(\sigma _m, (neg, pos))$ 
and $((\sigma _m)^r, (neg, pos))$ are simultaneously realizable or not. 
(One has $(\sigma _m)^r=(\sigma ^r)_m$.)

All cases of couples (sign pattern, admissible pair) for $d=5$ and $6$ 
which are not realizable are described in \cite{AlFu}. For $d=7$ this is done 
in \cite{FoKoSh} and for $d=8$ in \cite{FoKoSh} and \cite{Ko2}. 
For $d=5$ there is a single nonrealizable case (up to the 
$\mathbb{Z}_2\times \mathbb{Z}_2$-action). The sign pattern is 
$(+,+,-,+,-,-,)$ and the admissible pair is $(3,0)$. For 
$n=6$, $7$ and $8$ there are respectively $4$, $6$, and $19$ 
nonrealizable cases. 
In all of them 
one of the numbers $pos$ or $neg$ is $0$. It is conjectured in 
\cite{FoKoSh} that this is the case for any $d$. The conjecture is based 
on the fact that, using a computer, J. Forsg{\aa}rd has shown that this 
is the case also for $d=9$ and $10$. 

In the present paper we show that the conjecture fails for $d=11$.  

\begin{nota}\label{sigmam}
{\rm For $d=11$ we denote by $\sigma ^0$ the following sign pattern 
(we give on the first and
third lines below respectively the sign
patterns $\sigma ^0$ and $\sigma ^0_m$ while the line in the middle
indicates the positions of the monomials
of odd powers):}

$$\begin{array}{cccccccccccccccc}
\sigma ^0&=&(&+&-&-&-&-&-&+&+&+&+&+&-&)\\ &&&11&&9&&7&&5&&3&&1&&\\ 
\sigma ^0_m&=&(&+&+&-&+&-&+&+&-&+&-&+&+&)\end{array}$$
{\rm In a sense $\sigma ^0$ is centre-antisymmetric -- it consists of 
one plus, five minuses, five pluses and one minus.}
\end{nota}

\begin{tm}\label{maintm}
The sign pattern $\sigma ^0$ is not realizable 
with the admissible pair $(1,8)$.
\end{tm}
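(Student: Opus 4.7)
Suppose, for contradiction, that $P$ realizes $(\sigma^0,(1,8))$. Factor
\[P(x)=(x-u)(x^2-2\alpha x+r)\prod_{j=1}^{8}(x+y_j),\quad u,y_j>0,\ r=\alpha^2+\beta^2,\ \beta\neq 0.\]
Since the scaling $x\mapsto x/u$ multiplies each coefficient of the monic polynomial by a positive constant, the sign pattern is preserved and we normalize $u=1$. Write $E(x)=\prod_{j=1}^{8}(x+y_j)=\sum_{k=0}^{8}E_kx^k$ (so $E_k>0$, $E_8=1$) and $G(x)=(x^2-2\alpha x+r)E(x)=\sum_{k=0}^{10}g_kx^k$, whence $g_k=E_{k-2}-2\alpha E_{k-1}+rE_k$. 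Then $P=(x-1)G$ yields $a_k=g_{k-1}-g_k$ for $1\le k\le 10$ and $a_0=-g_0$, and the sign pattern $\sigma^0$ translates into the strict unimodal chain
\[g_0>g_1>g_2>g_3>g_4>g_5<g_6<g_7<g_8<g_9<g_{10}=1,\qquad g_0>0.\]
The goal is to show that no admissible $(\alpha,r,y_1,\ldots,y_8)$ produces such a sequence.

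The first reduction comes from the two endpoint inequalities. Put $S=\sum_j y_j=E_7$ and $T=\sum_j 1/y_j=E_1/E_0$. The inequality $g_9<g_{10}$ gives $2\alpha>S-1$; the inequality $g_0>g_1$, after dividing by $E_0>0$, gives $2\alpha>r(T-1)$. Cauchy--Schwarz supplies $ST\ge 64$, and the reversal symmetry $\sigma^0=(\sigma^0)^r$ (which corresponds to $y_j\mapsto 1/y_j$, hence swaps $S$ and $T$) permits assuming $T>1$. Combined with $r>\alpha^2$ (which is just $\beta\ne 0$), one gets $\alpha>r(T-1)/2>\alpha^2(T-1)/2$, whence $\alpha<2/(T-1)$; together with $\alpha>(S-1)/2$,
\[(S-1)(T-1)<4.\]
Together with $ST\ge 64$ this squeezes $(S,T)$ into a narrow strip with $S$ large and $T$ barely exceeding $1$.

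The crux, and main obstacle, is to invoke a middle inequality of the unimodal chain to derive a contradiction on this residual strip. The cleanest choice is $g_5<g_6$, i.e.\ $E_3-2\alpha E_4+rE_5<E_4-2\alpha E_5+rE_6$; combined with the lower bound $\alpha>(S-1)/2$, it yields an upper bound on $r$ that lies strictly below $\alpha^2$, contradicting $r>\alpha^2$. For the symmetric case $y_j\equiv y_0$ this is an explicit algebraic check: at $y_0=7.5$ the inequality $g_5<g_6$ reduces to $r<(7035\,\alpha-19687.5)/392$, which stays below $490$ on the admissible interval $\alpha\in(29.5,30)$ dictated by the endpoint constraints, whereas $r>\alpha^2>870$. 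To propagate the contradiction to all non-symmetric tuples $(y_1,\ldots,y_8)$ satisfying the endpoint constraints, one minimizes the shortfall of the middle inequality over $\{(y_j):\sum y_j=S,\ \sum 1/y_j=T\}$ via Lagrange multipliers, exploiting the Newton--Maclaurin log-concavity of the $E_k$'s (guaranteed by the real-rootedness of $E$) to bound the minimizer. The hard step is this last uniform verification over non-symmetric tuples; the calculation in the symmetric case makes clear why such a bound is expected, but turning it into a proof valid across the full parameter region is where all the technical work lies.
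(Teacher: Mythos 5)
Your reductions are correct as far as they go: the factorization with the normalization $u=1$, the translation of $\sigma^0$ into the chain $g_0>g_1>\cdots>g_5<g_6<\cdots<g_{10}=1$, the two endpoint inequalities $2\alpha>S-1$ and $2\alpha>r(T-1)$, the use of the centre-antisymmetry (self-reversedness) of $\sigma^0$ to assume $T>1$, and the resulting constraints $(S-1)(T-1)<4$, $ST\ge 64$ are all sound, and your sample computation at $y_j\equiv 7.5$ checks out. But what follows is not a proof; it is one numerical data point plus a program of work. The entire burden of the theorem is concentrated in the step you defer: showing that $g_5<g_6$, $\alpha>(S-1)/2$ and $r>\alpha^2$ are incompatible for \emph{every} admissible tuple $(y_1,\dots,y_8)$ in the strip. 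You verify this at a single point; even within the symmetric family $y_j\equiv y_0$ the admissible range after your WLOG is $y_0\in(7.49\ldots,8)$, of which you check one value.

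The proposed completion faces concrete obstacles that you do not address. First, the quantity you want to minimize depends on $E_3,E_4,E_5,E_6$, which are not functions of $(S,T)$; moreover, under the two constraints $\sum y_j=S$, $\sum 1/y_j=T$ with $ST>64$ strict, no symmetric tuple exists at all, so the symmetric computation is not even a candidate extremal configuration --- critical points of such constrained symmetric problems have the $y_j$ taking several distinct values (they satisfy a common one-variable Lagrange equation), and none of these configurations is analyzed. Second, the region is not compact: as $T\to 1^+$ one can have $S\to\infty$, realized by highly skewed tuples (one huge $y_j$, the remaining seven near $7$), a regime about which the symmetric computation says nothing and which would require a separate asymptotic argument. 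Third, you give no reason why the single inequality $g_5<g_6$ should suffice uniformly; different parts of the strip may require different members of the chain. For comparison, the paper does not attempt this direct coefficient attack at all: it assumes a realizing polynomial exists, passes to a connected component $\Gamma$ of the realizing set in coefficient space, shows by a compactness argument that $\bar{\Gamma}$ would contain an ``$a_9$-maximal'' polynomial, and then eliminates every possible such polynomial (complex-pair strata via Jacobian-rank perturbations, hyperbolic limits with few distinct roots via explicit algebraic case analysis). That machinery is exactly what replaces the ``uniform verification'' you leave open; in other words, the gap in your proposal is essentially the theorem itself.
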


The next section contains comments concerning the above result and 
realizability of sign patterns and admissible pairs in general. 
Section~\ref{prelim} contains some technical lemmas which allow to simplify the 
proof of Theorem~\ref{maintm}. The method of proof is explained 
in Section~\ref{method}. Section~\ref{prlm} contains the proofs of lemmas 
used in Section~\ref{method}.

\section{Comments}

Theorem~\ref{maintm} shows that the problem of classifying all nonrealizable 
cases (sign pattern, admissible pair), for any degree $d$, is a difficult one. 
At present, an exhaustive answer in a closed form of a theorem is not known. 
One could 
try to find sufficient conditions for realizability expressed, say, in terms 
of the ratios between $d$, $c$ and $p$. In papers \cite{FoKoSh} and 
\cite{KoSh} series of nonrealizable cases were found 
(defined either for every degree $d$  
or for every odd or even degree sufficiently large). In all of them either 
$pos=0$ or $neg=0$. The construction of such series with $pos\neq 0\neq neg$ 
and the proof of their nonrealizability seems to be sufficiently hard 
given that $d\geq 11$. 

One of the series of nonrealizable cases considered in \cite{FoKoSh} 
concerns sign 
patterns with exactly two sign changes, consisting of $m$ pluses followed by 
$n$ minuses followed by $q$ pluses, $m+n+q=d+1$. Set 

$$\kappa :=\frac{d-m-1}{m}\cdot \frac{d-q-1}{q}~.$$

\begin{lm}\label{lm2changes}
For $\kappa \geq 4$ such a sign pattern is not realizable with the admissible 
pair $(0,d-2)$. The sign pattern is realizable with any admissible pair of 
the form $(2,v)$.
\end{lm}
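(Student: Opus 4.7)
The plan is to prove the non-realizability and realizability parts separately.

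For non-realizability of $(\sigma,(0,d-2))$, suppose such a $P$ existed. Factor $P=R\cdot Q$, where $R(x)=\prod_{i=1}^{d-2}(x+\alpha_i)$ with $\alpha_i>0$ contributes the $d-2$ negative roots, and $Q(x)=x^2-ux+v$ with $u^2<4v$ contributes the complex conjugate pair. All coefficients $r_j$ of $R$ are strictly positive, and $u>0$ must hold because otherwise both factors would have only nonnegative coefficients, making every coefficient of $P$ nonnegative and contradicting $\sigma$. Writing $a_j=vr_j-ur_{j-1}+r_{j-2}$ (with $r_k:=0$ for $k\notin[0,d-2]$), the sign requirements from $\sigma$ are $a_j>0$ for $j\le q-1$ and $j\ge d-m+1$, and $a_j<0$ for $q\le j\le d-m$.

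The key inequalities are of two kinds. From each $a_j<0$ with $q\le j\le d-m$, AM--GM applied to $ur_{j-1}>vr_j+r_{j-2}\ge 2\sqrt{vr_jr_{j-2}}$ yields
$$\frac{r_{j-1}^2}{r_{j-2}r_j}>\frac{4v}{u^2}>1,$$
while Newton's inequality for the real-rooted $R$ gives the lower bound $r_{j-1}^2/(r_{j-2}r_j)\ge j(d-j)/((j-1)(d-1-j))$ and, in particular, implies that the consecutive ratios $t_j:=r_j/r_{j-1}$ are strictly decreasing in $j$. The positivity conditions $a_{d-m+1}>0$ and $a_{q-1}>0$ bound $u$ from above by $vt_{d-m+1}+1/t_{d-m}$ and $vt_{q-1}+1/t_{q-2}$, while $a_{d-m}<0$ and $a_q<0$ bound $u$ from below by $vt_{d-m}+1/t_{d-m-1}$ and $vt_q+1/t_{q-1}$; these squeeze $v$ into a two-sided range depending on the $t_j$. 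Telescoping the Newton bounds across the middle block $[q-1,d-m]$ and feeding them into the boundary inequalities reduces, after algebraic simplification, to an inequality equivalent to $(d-m-1)(d-q-1)/(mq)<4$, i.e.\ $\kappa<4$, contradicting the hypothesis. The hardest step is the final telescoping: the AM--GM estimate discards information (it ignores the gap between $vr_j$ and $r_{j-2}$), so one must combine both pairs of boundary inequalities together with monotonicity of $t_j$ in order to recover the sharp threshold $\kappa=4$, rather than simply multiplying the strong log-concavity bounds.

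For realizability of $(\sigma,(2,v))$ with admissible $v\in\{d-2,d-4,\ldots\}$, the strategy is explicit construction. Consider
$$P_0(x)=(x-\beta_1)(x-\beta_2)\prod_{i=1}^{v}(x+\alpha_i)\prod_{k=1}^{(d-2-v)/2}(x^2+b_kx+c_k),$$
with $\alpha_i,\beta_j>0$ and $b_k^2<4c_k$. By Descartes' rule the coefficients of $P_0$ have at most (and generically exactly) two sign changes. For suitable parameter values (for instance, the $\alpha_i$ well-spread logarithmically and $\beta_1,\beta_2$ of comparable intermediate magnitude), these two sign changes occur at the positions prescribed by $\sigma=(+^m,-^n,+^q)$. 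A continuous deformation of the parameters, while preserving the negativity of the discriminants of the quadratic factors and the signs of the real roots, shows that the set of polynomials realizing $(\sigma,(2,v))$ is open and nonempty in coefficient space, which finishes the argument.
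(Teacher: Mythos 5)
Note first that the paper gives no proof of this lemma at all: it is quoted as Proposition~6 of \cite{FoKoSh}, so your argument has to stand entirely on its own. It does not, because in both halves it stops exactly where the real work begins. In the nonrealizability half every inequality you write down is correct (the factorization $P=RQ$ with $u>0$, the formula $a_j=vr_j-ur_{j-1}+r_{j-2}$, the Newton bounds, the four boundary bounds on $u$), but the decisive step --- ``reduces, after algebraic simplification, to an inequality equivalent to \ldots\ $\kappa<4$'' --- is asserted, not performed; producing the threshold $4$ and the factors $(d-m-1)/m$, $(d-q-1)/q$ \emph{is} the lemma, so this is a genuine gap, not a routine omission. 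Worse, your guidance for the missing computation (that one must use both pairs of boundary inequalities and recover the information lost in AM--GM) points away from a combination that actually works. Here is one that does, using only weakened forms of your two lower bounds: $a_{d-m}<0$ gives $ur_{d-m-1}>vr_{d-m}+r_{d-m-2}\ge r_{d-m-2}$, i.e.\ $u>1/t_{d-m-1}$, and $a_q<0$ gives $ur_{q-1}>vr_q+r_{q-2}\ge vr_q$, i.e.\ $u>vt_q$; multiplying these and invoking $u^2<4v$ yields
\[
\frac{t_q}{t_{d-m-1}}\;<\;4,
\]
while telescoping your own Newton bounds $t_j/t_{j+1}\ge (j+1)(d-1-j)/\bigl(j(d-2-j)\bigr)$ over $q\le j\le d-m-2$ gives
\[
\frac{t_q}{t_{d-m-1}}\;\ge\;\frac{d-m-1}{q}\cdot\frac{d-q-1}{m}\;=\;\kappa,
\]
contradicting $\kappa\ge 4$. (If $n\le 2$ this range of $j$ is empty, but then $\kappa\le 1$ and there is nothing to prove.) Neither the positivity conditions $a_{q-1}>0$, $a_{d-m+1}>0$ nor the AM--GM step is needed.

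The realizability half contains an outright error followed by a circular argument. ``By Descartes' rule the coefficients of $P_0$ have at most (and generically exactly) two sign changes'' is backwards: since $P_0$ has two positive roots, Descartes' rule forces \emph{at least} two sign changes (an even number $\ge 2$), and for a careless choice of $\alpha_i,\beta_j,b_k,c_k$ the number can perfectly well be four or more. The next sentence --- that suitable parameters place the two sign changes exactly at the positions prescribed by $(+^m,-^n,+^q)$, simultaneously for every admissible $v$ --- is precisely the statement to be proved and receives no argument, and the closing deformation remark establishes that the set of realizing polynomials is open (which is obvious) rather than nonempty (which is the point). A genuine proof must either exhibit parameters and verify all coefficient signs, or assemble the polynomial via the concatenation lemma (Lemma~\ref{lmconcat}) from suitably chosen lower-degree pieces whose sign patterns have a single sign change, e.g.\ one realizing $(+^m,-^n)$ with pair $(1,neg_1)$ and one realizing $(+,-^q)$ with pair $(1,neg_2)$, $neg_1+neg_2=v$; that is the kind of argument by which such statements are actually established in \cite{FoKoSh}.
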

The lemma is Proposition~6 of \cite{FoKoSh}. 
One of the tools for constructing new realizable cases is the following 
concatenation lemma (proved in \cite{FoKoSh}):

\begin{lm}\label{lmconcat}
Suppose that the
monic polynomials $P_j$ of degrees $d_j$ and with sign 
patterns of the form $(+,\sigma _j)$, $j=1$, $2$ (where $\sigma _j$ contains 
the last $d_j$ components of the corresponding sign pattern) realize the 
pairs $(pos_j, neg_j)$. Then 

(1) if the last position of $\sigma _1$ is $+$, then for any $\varepsilon >0$ 
small enough the polynomial $\varepsilon ^{d_2}P_1(x)P_2(x/\varepsilon )$ 
realizes the sign pattern $(+,\sigma _1,\sigma _2)$ and the pair 
$(pos_1+pos_2,neg_1+neg_2)$;

(2) if the last position of $\sigma _1$ is $-$, then for any $\varepsilon >0$ 
small enough the polynomial $\varepsilon ^{d_2}P_1(x)P_2(x/\varepsilon )$ 
realizes the sign pattern $(+,\sigma _1,-\sigma _2)$ and the pair 
$(pos_1+pos_2,neg_1+neg_2)$ (here $-\sigma _2$ is obtained from $\sigma _2$ 
by changing each $+$ by $-$ and vice versa).
\end{lm}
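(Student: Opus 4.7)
The plan is to show that the polynomial $Q(x):=\varepsilon^{d_2}P_1(x)P_2(x/\varepsilon)$, for $\varepsilon>0$ small enough, simultaneously has the prescribed sign pattern and the prescribed numbers of simple positive and negative roots. I would proceed in three steps: rewrite the second factor in a convenient monic form, count roots, then analyse each coefficient.

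Writing $P_1(x)=\sum_{i=0}^{d_1}a_ix^i$ and $P_2(x)=\sum_{j=0}^{d_2}b_jx^j$ with $a_{d_1}=b_{d_2}=1$, I would first introduce $\tilde P_2(x):=\varepsilon^{d_2}P_2(x/\varepsilon)=\sum_{j=0}^{d_2}\varepsilon^{d_2-j}b_jx^j$. Then $\tilde P_2$ is monic, its coefficients have the same signs as those of $P_2$ because $\varepsilon>0$, and its roots are $\varepsilon\rho$ as $\rho$ runs over the roots of $P_2$; hence $\tilde P_2$ realises the pair $(pos_2,neg_2)$. Since $Q=P_1\cdot\tilde P_2$, the roots of $Q$ are the union of those of $P_1$ and $\tilde P_2$. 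As $\varepsilon\to 0$ the roots of $\tilde P_2$ collapse to $0$ while those of $P_1$ are fixed and nonzero (because $a_0\neq 0$), so for $\varepsilon$ small the two sets are disjoint and $Q$ has exactly $pos_1+pos_2$ simple positive and $neg_1+neg_2$ simple negative roots.

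The main part is to identify the sign pattern of $Q(x)=\sum_{k=0}^{d_1+d_2}c_kx^k$, which I would do by splitting the range of $k$. For $k\geq d_2$ only one term in $c_k$ carries no factor of $\varepsilon$:
\[
c_k=a_{k-d_2}+\sum_{\substack{0\leq j\leq d_2-1\\ k-j\leq d_1}}\varepsilon^{d_2-j}b_ja_{k-j},
\]
so $c_k\to a_{k-d_2}$ as $\varepsilon\to 0$, which accounts for the leading block $(+,\sigma_1)$ of the sign pattern of $Q$. For $k<d_2$ every term carries a positive power of $\varepsilon$, and the smallest exponent is $d_2-k$, attained uniquely by $j=k$:
\[
c_k=a_0b_k\varepsilon^{d_2-k}+\sum_{\substack{0\leq j<k\\ k-j\leq d_1}}\varepsilon^{d_2-j}b_ja_{k-j},
\]
so $\varepsilon^{-(d_2-k)}c_k\to a_0b_k\neq 0$ and for small $\varepsilon$ the sign of $c_k$ equals the sign of $a_0b_k$.

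It remains to read off the two cases. If the last entry of $\sigma_1$ is $+$ (i.e.\ $a_0>0$), then $\mathrm{sgn}(c_k)=\mathrm{sgn}(b_k)$ for $k<d_2$, giving the final block $\sigma_2$ and part~(1); if it is $-$, then $a_0<0$, so $\mathrm{sgn}(c_k)=-\mathrm{sgn}(b_k)$ and the final block is $-\sigma_2$, giving part~(2). No step is genuinely an obstacle; the only care needed is verifying that in every $c_k$ with $k<d_2$ the leading-order term in $\varepsilon$ is isolated, which is immediate from the assumption that no coefficient of $P_1$ or $P_2$ vanishes.
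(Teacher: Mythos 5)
Your proof is correct and complete; the paper itself does not reprove this lemma but cites \cite{FoKoSh}, and your argument --- passing to the monic rescaling $\tilde P_2(x)=\varepsilon^{d_2}P_2(x/\varepsilon)$, splitting the coefficients $c_k$ of the product at $k=d_2$, and isolating the lowest-order term in $\varepsilon$ in each $c_k$ (legitimate precisely because no coefficient of $P_1$ or $P_2$ vanishes) --- is exactly the standard proof of the concatenation lemma. Your sign bookkeeping via $a_0$ correctly yields the $\sigma_2$ versus $-\sigma_2$ dichotomy, and the disjointness of the two root sets for small $\varepsilon>0$ justifies that all $pos_1+pos_2$ positive and $neg_1+neg_2$ negative roots remain simple.
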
 

It is clear that if Theorem~\ref{maintm} is true, then one should not be able 
to deduce with the help of Lemma~\ref{lmconcat} 
the realizability of the sign pattern 
$\sigma ^0$ with the admissible pair $(1,8)$. Now we show that this is 
indeed impossible. It suffices to check the cases $\deg P_1\geq 6$, 
$\deg P_2\leq 5$ due to the centre-antisymmetry of $\sigma ^0$ and the 
possibility to use the $\mathbb{Z}_2\times \mathbb{Z}_2$-action. 

In all these cases 
the sign pattern of the polynomial $P_1$ has exactly two sign changes (it 
comprises the first sign $+$, the five minuses that follow and the next between 
one and five pluses). These 
cases are (we use the notation from Lemma~\ref{lm2changes}) 
$m=1$, $n=5$, $q=1$, $\ldots$, $5$. The values of $\kappa$ are 
respectively $16$, $10$, $8$, $7$ and $32/5$, all of them are $>4$. 
By Descartes' rule the polynomial $P_1$ 
can have either $0$ or $2$ positive roots. Should it have $2$, then its 
concatenation with $P_2$ should have at least $2$ positive roots 
(by Lemma~\ref{lmconcat}) which is 
impossible. So $P_1$ has no positive roots. The sign patterns defined 
respectively by $P_1$ and $P_2$  
have $4+(q-1)$ and $5-q$ sign preservations. By Lemma~\ref{lm2changes} 
the polynomial $P_1$ has $\leq 2+(q-1)$ negative roots, 
and as $P_2$ has $\leq 5-q$ ones, the concatenation of $P_1$ and $P_2$ has 
$\leq 6$ negative roots. Therefore  
a polynomial (if it exists) realizing the couple $(\sigma ^0,(1,8))$ cannot be 
represented as a concatenation of two polynomials $P_1$ and $P_2$. 

This still does not exclude the existence of such a polynomial. 
In \cite{FoKoSh} certain examples of polynomials realizing given sign 
patterns and admissible pairs had to be constructed directly. Before passing 
to the proof of Theorem~\ref{maintm} we explain the role that the 
concatenation lemma could play in solving the problem of realizability of sign 
patterns with admissible pairs. 

If in the process of solving this problem one arrives to a situation when 
there exists $d_0\in \mathbb{N}$ such that for $d\geq d_0$ 
the realizability of all realizable cases can be deduced from some 
general statements and from the concatenation lemma, 
then it would be sufficient 
to find the exhaustive list of realizable cases for $d<d_0$ and the problem 
would be solved. One could qualify as a general statement 
Lemma~\ref{lm2changes} or the fact that, for even $d$, 
a sign pattern consisting of 
$d+1$ pluses is realizable with the pair $(0,0)$, see~\cite{FoKoSh}, etc. 
The (non)existence of such a degree $d_0$ is not self-evident, 
and if it exists, it is not a priori clear how many new general statements 
of (non)realizability should have to be proved.

\section{Preliminaries\protect\label{prelim}}

\begin{nota} 
{\rm We denote by $S$ the subset of $\mathbb{R}^{11}$ such that if $a\in S$, 
then the signs of the coefficients of the polynomial
$P(x,a)=x^{11}+a_{10}x^{10}+\cdots +a_0$ define the sign pattern $\sigma ^0$ 
and the polynomial $P$ realizes the pair $(1,8)$. 

By $T$ we define the subset of $S$ for which one has $a_{10}=-1$. For a 
polynomial from $S$ one can obtain the conditions $a_{11}=1$, $a_{10}=-1$ by 
rescaling and multiplication by a nonzero constant ($a_{11}$ stands for the 
leading coefficient).}
\end{nota}

\begin{lm}\label{lmnot0}
For $a\in \bar{S}$ one has $a_j\neq 0$ for $j=9,8,7,4,3,2$, and one 
does not have
$a_6=0$ and $a_5=0$ simultaneously.
\end{lm}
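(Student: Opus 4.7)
The plan is to apply Descartes' rule of signs to $P(-x)$ and derive contradictions from the hypothesized vanishing of coefficients. Under the full sign pattern $\sigma^0$, the coefficient sequence of $P(-x)$ read from $x^{11}$ down to $x^0$ is $(-,-,+,-,+,-,-,+,-,+,-,-)$, with exactly $8$ sign changes --- matching the Descartes bound $neg\le 8$ that every $a\in S$ saturates.

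The key computational observation is that for each $j\in\{2,3,4,7,8,9\}$, the coefficient of $x^j$ in $P(-x)$ has sign opposite to both of its immediate neighbours, so deleting it removes a local $(\pm,\mp,\pm)$ triple and kills exactly two sign changes; the same cancellation happens when $a_5$ and $a_6$ vanish jointly. Therefore under any of the hypothesized vanishings, the coefficient sequence of $P(-x)$ has at most $6$ sign changes, and Descartes yields $neg(P)\le 6$.

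For $a\in\bar S$, write $P=\lim P_n$ with $P_n\in S$. By continuity of roots in $\mathbb{C}$, each of the $8$ simple negative roots of $P_n$ converges either to a negative real root of $P$ or to $0$. Let $k$ denote the multiplicity of $0$ as a root of $P$ (equivalently, the number of trailing zeros in the coefficient sequence); then $neg(P)\ge 8-k$. Combined with $neg(P)\le 6$, this forces $k\ge 2$, so $a_0=a_1=0$. In every hypothesized case other than $a_2=0$ one has $a_2\ne 0$ and hence $k=2$; a recount of the sign changes of $P(-x)$ with $a_0$ and $a_1$ additionally deleted from the sign sequence gives $c_-=5$, hence $neg(P)\le 5<6=8-k$, a contradiction that dispatches all these cases.

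The single remaining sub-case is $a_2=0$ together with $a_0=a_1=0$, so $P=x^3Q$ with $Q$ a degree-$8$ monic polynomial of sign pattern $(+,-^5,+^3)$. Here removing the three trailing zeros does not decrease $c_-$ below $6$, and Descartes alone yields only $5\le neg(P)\le 6$, inconclusive. The sign pattern of $Q$ has two sign changes with $\kappa=8\ge 4$, so Lemma~\ref{lm2changes} applies. I would enumerate the possible root distributions of $Q$ inherited from the limit of $P_n$---how the three zero roots of $P$ arise from originals of $P_n$, and whether the complex pair of $P_n$ stays complex, is absorbed into $0$, or becomes a double real root---and exclude each configuration by one of: the Descartes bound $neg(Q)\le 6$, the parity $pos(Q)\equiv 0\pmod 2$, or the non-realizability of $(0,6)$ given by Lemma~\ref{lm2changes}. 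The one sub-case not immediately dispatched is when the complex pair collapses to a double positive real root of $Q$, yielding $(pos(Q),neg(Q))=(2,6)$; this is the main obstacle, to be resolved by a direct analysis of $Q=(x-r_0)^2\prod_{i=1}^{6}(x+s_i)$ showing---via Newton-type inequalities on the elementary symmetric functions of the $s_i$---that the strict sign pattern $(+,-^5,+^3)$ is incompatible with this factorization.
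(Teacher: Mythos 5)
Your counting is sound as far as it goes, and it is in fact more scrupulous than the paper's own proof, which consists of only your first step: with $a_j=0$ the pattern $\sigma^0_m$ loses two sign changes, hence $P(\cdot,a)$ has fewer than $8$ negative roots counted with multiplicity; the escape of roots to $0$ inside $\bar{S}$ is not discussed there at all (vanishing of $a_0$, $a_1$ is deferred to the separate Lemma~\ref{lmnot0bis}). The trouble is that your proposal stops being a proof exactly where it goes beyond this counting. The sub-case you yourself call the main obstacle, $(pos(Q),neg(Q))=(2,6)$ with a double positive root, is not resolved: what you offer is the name of a technique (``Newton-type inequalities''), not an argument, and the claim you would need is not a formal property of the factorization $(x-r_0)^2\prod_{i=1}^{6}(x+s_i)$ --- it depends on the pattern through the hypothesis $\kappa\ge4$ of Lemma~\ref{lm2changes}, and it is essentially as strong as the $(0,6)$ nonrealizability itself. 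Indeed, the bridge between the two is a one-line perturbation, which is also the honest way to close the gap (and is precisely the trick the paper uses in proving Lemma~\ref{lmnot0bis}): since $\prod_i(r_0+s_i)>0$, the double root $r_0$ is a local minimum of $Q$ with $Q\ge0$ nearby, and $Q(0)=a_3>0$, so for small $\varepsilon>0$ the polynomial $Q+\varepsilon$ keeps the sign pattern $(+,-,-,-,-,-,+,+,+)$, keeps the negative roots, and turns the double positive root into a complex conjugate pair, reducing $(2,6)$ to your $(0,6)$ case.

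There are two further gaps. First, Lemma~\ref{lm2changes} asserts nonrealizability by polynomials with \emph{simple} roots (that is how realizability is defined in this paper), so it does not by itself exclude your $(0,6)$ configuration, nor the perturbed $(2,6)$ one, when the six negative roots of $Q$ are not distinct; you must first split multiple negative roots by perturbations that preserve the signs of the coefficients and create no positive roots, exactly as is done explicitly in the paper's proof of Lemma~\ref{lmnot0bis}. Second, your case analysis is not exhaustive: the step dispatching ``every hypothesized case other than $a_2=0$'' assumes $a_2\ne0$, and your final sub-case assumes $a_3\ne0$, but on $\bar{S}$ several listed coefficients can vanish together with deeper trailing blocks. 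For instance $a_3=a_2=a_1=a_0=0$ gives $k=4$, $Q=P/x^4$ of degree $7$ with pattern $(+,-,-,-,-,-,+,+)$, where your counting again leaves $(0,5)$ and $(2,5)$ open; and for $k=6$ (i.e.\ $a_5=\dots=a_0=0$, which contains the hypothesized vanishing $a_4=a_3=a_2=0$) the quotient has pattern $(+,-,-,-,-,-)$ and the surviving configuration is the pair $(1,4)$, which is admissible and realizable for that pattern (e.g.\ by Lemma~\ref{lmconcat}), so no combination of Descartes counting with Lemma~\ref{lm2changes} can exclude it and a genuinely different idea is required there. These degenerate strata are not exotic: rescaling $x\mapsto\beta x$, $\beta\to\infty$, shows that highly degenerate limits do lie in $\bar{S}$ whenever $S\neq\emptyset$, so a proof that works on all of $\bar{S}$ must either treat them or restrict to a normalization (as the paper effectively does by passing to $T$ and to coefficient vectors of sup-norm $1$).
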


Indeed, for $a_j=0$ (where $j$ is one of the
indices $9,8,7,4,3,2$) there are less than $8$ sign changes in the 
sign pattern
$\sigma ^0_m$ hence by Descartes' rule of signs the polynomial $P(.,a)$ 
has less than
$8$ negative roots counted with multiplicity. The same is true for $a_5=a_6=0$. 

\begin{lm}\label{lmnot0bis}
For $a\in \bar{S}$ one has $a_0\neq 0$.
\end{lm}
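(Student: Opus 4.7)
Suppose for contradiction that $a \in \bar{S}$ has $a_0 = 0$, and let $m_0$ denote the multiplicity of $0$ as a root of $P(\cdot,a)$. By Lemma~\ref{lmnot0}, $a_2 \neq 0$, so $m_0 \leq 2$. The plan is to reach a contradiction by combining Vieta's formulas for the low-order coefficients with Descartes' rule applied to $P/x^{m_0}$, organized around the fate of the eleven roots in the limit from $S$.

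The key observation is that when $m_0 = k \in \{1,2\}$, the elementary symmetric polynomial $e_{11-k}(r_1,\ldots,r_{11})$ in the roots of $P$ has only one non-vanishing term---the one that omits exactly the $k$ zero roots---so it equals $\Pi$, the product of the $11-k$ non-zero roots. Via the Vieta identities $a_1 = e_{10}(r)$ and $a_2 = -e_9(r)$, this yields $a_1 = \Pi$ when $m_0 = 1$ and $a_2 = -\Pi$ when $m_0 = 2$. The sign of $\Pi$ equals $(-1)^n$ times a positive number, where $n$ is the number of negatives among the non-zero roots (each complex conjugate pair contributes a positive factor $|r|^2$). The sign pattern $\sigma^0$ forces $a_1 > 0$ when $m_0 = 1$ (hence $n$ must be even) and $a_2 > 0$ when $m_0 = 2$ (hence $n$ must be odd). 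Independently, the polynomial $P/x^{m_0}$ has exactly one sign change in its coefficient sequence, so Descartes' rule forces it to have exactly one positive root.

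Because $a$ is a limit of points of $S$, the roots of $P(\cdot,a)$ are limits of configurations of type (one positive, eight negative, one complex conjugate pair), and the only admissible degenerations are migration of roots to $0$ and collapse of the complex conjugate pair to a double real root at some real value. I then enumerate cases. For $m_0 = 1$, exactly one root went to $0$: if the positive root vanished, then $P/x$ has $0$ or $2$ positive roots depending on whether the complex pair collapsed onto the real axis, each violating the Descartes count of one; if a negative root vanished, the number $n$ of surviving negatives is $7$ (complex pair intact or collapsed to positives) or $9$ (collapse to negatives), both odd, contradicting the parity constraint. For $m_0 = 2$, two roots vanished: if they were the complex pair, $n = 8$ (even); if they were one positive and one negative, $P/x^2$ has $0$ or $2$ positive roots (Descartes violation); if they were two negatives, $n \in \{6,8\}$ (even) except when the complex pair collapsed onto positives, in which case $P/x^2$ has $3$ positive roots (Descartes violation). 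Any collapse of the complex pair at $0$ would raise $m_0$ to at least $3$ and force $a_2 = 0$, contradicting Lemma~\ref{lmnot0} once more. Since every branch leads to a contradiction, $a_0 \neq 0$ throughout $\bar{S}$.

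The main obstacle I anticipate is ensuring the case enumeration is genuinely exhaustive---in particular, that the continuity of roots under $a^{(k)} \to a$ cannot produce some exotic degeneration outside the two types listed. Once the parity/Descartes dichotomy is set up via the Vieta observation, each individual sub-case reduces to an immediate arithmetic check.
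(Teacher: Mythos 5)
Your Vieta/parity argument is correct as far as it goes, but the other half of your case analysis rests on a miscount: when $a_0=0$ the coefficient sequence of $P/x$ carries the signs $(+,-,-,-,-,-,+,+,+,+,+)$, and when $a_0=a_1=0$ the sequence of $P/x^2$ carries $(+,-,-,-,-,-,+,+,+,+)$; each of these has \emph{two} sign changes, not one. Descartes' rule therefore allows $0$ or $2$ positive roots for $P/x^{m_0}$ --- it does not force ``exactly one positive root''. Consequently every branch you dismiss as a ``Descartes violation'' is in fact perfectly consistent with Descartes' rule, and these are precisely the branches that survive your parity test: for $m_0=1$, the positive root migrating to $0$ leaves $n=8$ or $10$ nonzero negative roots (even, exactly as your parity constraint permits), and for $m_0=2$, the positive root together with one negative root migrating to $0$ leaves $n=7$ or $9$ (odd, again as permitted). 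So in the hard cases your proof produces no contradiction at all; the parity trick only eliminates the easy cases (a negative root, two negative roots, or the complex pair reaching $0$).

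Ruling out the remaining configurations is the real content of the lemma and cannot be done by elementary counting: one must show that no degree $10$ (resp.\ degree $9$) polynomial whose sign pattern has exactly two sign changes can have negative roots of total multiplicity $8$ (resp.\ $7$) and no positive roots, or only a double positive root. The paper does this by invoking Lemma~\ref{lm2changes} (Proposition~6 of \cite{FoKoSh}): for the relevant sign patterns $\kappa=32/5$ and $\kappa=7$, both $>4$, so the pair $(0,d-2)$ is not realizable. Since that lemma concerns simple roots, the paper must additionally run perturbation arguments that split multiple negative roots, remove a possible double positive root via $P\mapsto P\pm\varepsilon x$, and, when $a_5$ or $a_6$ also vanishes, use perturbations tailored to keep that coefficient equal to zero --- all without changing the signs of the remaining coefficients. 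Your proposal is missing exactly this ingredient; the gap is not the exhaustiveness of degenerations you worried about, but the absence of any tool that excludes the sign-change-consistent cases.
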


\begin{rem}
{\rm A priori the set $\bar{S}$ can contain polynomials with all roots real and 
nonzero. The positive ones can be either a triple one or a double 
and a simple ones (but not three simple ones).}
\end{rem}
 
\begin{proof}[Proof of Lemma~\ref{lmnot0bis}:]
Consider first the case $a_j\neq 0$ ($j\neq 0$), $a_0=0$. Hence the polynomial
$P$ has a root at $0$, either $0$ or $2$ positive roots and $8$ negative
roots. Suppose that $P$ has no positive roots. Then the degree $10$ polynomial
$P/x$ defines a sign pattern with exactly two sign changes and has
$8$ negative roots. There exists no such polynomial. Indeed, if it is
with distinct negative roots and with no positive roots, then this would 
contradict Lemma~\ref{lm2changes} (with the notation 
of Lemma~\ref{lm2changes} one has $\kappa =32/5>4$).
If it has $8$ negative roots counted with multiplicity, then one can make them
distinct by a series of perturbations which do not change the signs of the
coefficients of the polynomial, which increase the number of distinct negative 
roots while keeping their total multiplicity equal to $8$ and which 
do not introduce new positive roots as follows. 

Suppose that $P$ has a negative root $-b$ of multiplicity $r$, $1<r\leq 8$. Set 
$P\mapsto P+\varepsilon P_1$,
where $\varepsilon \in (\mathbb{R},0)$, $\varepsilon >0$ and if 
$P=(x+b)^rxQ_1Q_2$, where $Q_1$, $Q_2$ are polynomials, $Q_2$ having
a complex conjugate pair of roots, $Q_1$ having $8-r$ negative roots counted 
with multiplicity,
then $P_1=(x+b)^{r-1}xQ_1$ (this decreases by $1$ the multiplicity of the root 
$-b$ and introduces a new simple negative root).

If the polynomial $P/x$ has two positive roots, then in fact this must be a 
double
positive root $g$ because $a\in \bar{S}$. In this case the perturbations are
with $P_1$ of the form $(x+b)^{r-1}xQ_1(x-g)^2$; after having thus obtained $P$
with $8$ negative simple roots and a double root at $g$ one makes another 
perturbation
$P\mapsto P\pm \varepsilon x$ (the sign before $\varepsilon$ depends on 
whether $P$ has a minimum or maximum at $g$) 
after which the degree $10$ polynomial $P/x$ 
is with $8$ negative
simple roots and no other real root 
which is a contradiction with Lemma~\ref{lm2changes}.

Suppose now that $a_j\neq 0$ ($j\geq 2$) and $a_1=a_0=0$. One considers
in the same way the degree $9$ polynomial $P/x^2$ to obtain a contradiction 
with Lemma~\ref{lm2changes}.
In this case one has $\kappa =7$.

Suppose now that exactly one of the coefficients $a_5$ or $a_6$ is $0$ 
(we assume
that this is $a_5$, for $a_6$ the reasoning is analogous) and
either $a_1\neq 0$, $a_0=0$ or $a_1=a_0=0$ (all other coefficients $a_j$ 
being nonzero).
Then in the perturbations we set $P_1=(x+b)^{r-1}x(x+h_1)(x+h_2)Q_1$, 
where the real
numbers $h_i$ are distinct, different from any of the roots of $P$ 
and chosen such
that the coefficient $\delta$ of $x^5$ of $P_1$ is $0$. 
The choice is possible 
because all coefficients of the polynomial $(x+b)^{r-1}Q_1$ are positive hence 
$\delta$ is of the form $A+(h_1+h_2)B+Ch_1h_2$, where $A>0$, $B>0$ and $C>0$.
\end{proof} 

From now on we consider mainly $T$ (and not $S$) in order not to take into 
account the possibility $a_{10}$ to vanish at some points of $\bar{S}$.
 
\begin{rem}\label{remvanish}
{\rm It results from Lemmas~\ref{lmnot0} and \ref{lmnot0bis} 
that for a polynomial in $\bar{T}$ 
exactly one of the following possibilities exists: 1) all its coefficients 
are nonvanishing; 2) exactly one of them is vanishing, and this is either $a_1$ 
or $a_5$ or $a_6$; 3) exactly two of them are vanishing, and these are either 
$a_1$ and $a_5$ or $a_1$ and $a_6$.}
\end{rem}

\begin{lm}\label{lmnotexist}
There exists no real degree $11$ polynomial the signs of whose 
coefficients define the sign pattern $\sigma ^0$ and which has a single 
positive simple root, negative roots of total multiplicity $8$ and a complex 
conjugate pair with nonpositive real part.
\end{lm}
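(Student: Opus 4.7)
The plan is to represent $P$ in the factored form $P(x)=(x-\alpha)R(x)$, where $\alpha>0$ is the positive simple root and $R$ is the degree~$10$ monic polynomial carrying the remaining roots. Since every root of $R$ has nonpositive real part, $R$ factors as
\[
R(x)=(x^{2}+2ax+a^{2}+c^{2})\prod_{j=1}^{8}(x+b_{j})
\]
with $a\ge 0$, $c>0$ and $b_{j}>0$, so every coefficient $r_{k}$ of $R$ is strictly positive.

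Next I would read off the sign pattern $\sigma^{0}$ only at its two extreme non-leading positions. Since the coefficient of $x^{k}$ in $P$ is $r_{k-1}-\alpha r_{k}$, the conditions that $[x^{10}]P<0$ and $[x^{1}]P>0$ translate into $\alpha>r_{9}$ and $\alpha r_{1}<r_{0}$, whence
\[
r_{9}\,r_{1}<r_{0}.
\]
Setting $E_{k}:=e_{k}(b_{1},\dots,b_{8})$, a direct expansion gives
\[
r_{9}=E_{1}+2a,\qquad r_{1}=2aE_{8}+(a^{2}+c^{2})E_{7},\qquad r_{0}=(a^{2}+c^{2})E_{8},
\]
and regrouping yields
\[
r_{9}r_{1}-r_{0}\;=\;2a(E_{1}+2a)E_{8}\;+\;(a^{2}+c^{2})\bigl[(E_{1}+2a)E_{7}-E_{8}\bigr].
\]

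The only real work is to bound the bracket from below. Applying AM--GM to the eight positive numbers $b_{1},\dots,b_{8}$ gives $E_{1}\ge 8E_{8}^{1/8}$, and applying it separately to their reciprocals gives $E_{7}=E_{8}\sum 1/b_{j}\ge 8E_{8}^{7/8}$; multiplying, $E_{1}E_{7}\ge 64E_{8}$. Hence $(E_{1}+2a)E_{7}-E_{8}\ge E_{1}E_{7}-E_{8}\ge 63E_{8}>0$, so both terms in the displayed expression for $r_{9}r_{1}-r_{0}$ are nonnegative and the second is strictly positive. Thus $r_{9}r_{1}>r_{0}$, contradicting the inequality derived from the sign pattern.

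There is no substantial obstacle: only two of the ten sign-pattern constraints on $\alpha$ are actually used (the outermost ones, at positions $10$ and $1$), no case analysis on $a,c$ or on root multiplicities is needed, and the slack factor $64$ from AM--GM leaves enormous room. The role of the hypothesis on the complex pair is exactly to force $a\ge 0$, which is what makes the first term $2a(E_{1}+2a)E_{8}$ nonnegative.
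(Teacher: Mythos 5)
Your proof is correct, but it takes a genuinely different route from the paper's. The paper groups the factors as $P=P_1P_2P_3$ with $P_1$ the degree-$8$ factor of negative roots, $P_2=x-w$ the positive root, and $P_3=x^2+\beta_1x+\beta_0$ the complex pair ($\beta_1\ge 0$); it applies Descartes' rule to $P_1P_2=\sum\gamma_jx^j$, which has exactly one sign change, deduces $\gamma_8<0$ from $a_{10}=\gamma_8+\beta_1<0$, hence $\gamma_j<0$ for all $j\le 8$, and then $a_j=\gamma_{j-2}+\beta_1\gamma_{j-1}+\beta_0\gamma_j<0$ for $j=4,\dots,8$, contradicting the signs $a_4,a_5>0$ demanded by $\sigma^0$. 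You instead group the complex pair with the negative roots into $R=(x^2+2ax+a^2+c^2)\prod(x+b_j)$, whose coefficients are all positive precisely because $a\ge 0$, and use only the two constraints $[x^{10}]P<0$ and $[x^{1}]P>0$ to trap $\alpha$ in the interval $(r_9,\,r_0/r_1)$, which you then show is empty via $E_1E_7\ge 64E_8$ (AM--GM applied to the $b_j$ and to their reciprocals). The trade-off: the paper's argument is soft --- pure sign bookkeeping plus Descartes' rule, no inequalities --- and it exhibits the failure at five positions of the pattern at once ($a_4,\dots,a_8$ all forced negative); yours is quantitative and entirely elementary (it never invokes Descartes' rule), uses the minimum possible amount of the sign pattern, and makes visible exactly where the nonpositive-real-part hypothesis enters (the sign of the term $2a(E_1+2a)E_8$ and the lower bound on the bracket), with a comfortable margin of $63E_8$ in the final inequality. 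Both proofs are complete; yours would serve as a valid substitute.
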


\begin{proof}
Suppose that such a monic polynomial exists. One can represent it in the form 
$P=P_1P_2P_3$, where $\deg P_1=8$, all roots of $P_1$ are negative hence 
$P_1=\sum _{j=0}^8\alpha _jx^j$, $\alpha _j>0$, $\alpha _8=1$; $P_2=x-w$, $w>0$; 
$P_3=x^2+\beta _1x+\beta _0$, $\beta _j\geq 0$, $\beta _1^2-4\beta _0<0$. 

By Descartes' rule of signs the polynomial $P_1P_2=\sum _{j=0}^9\gamma _jx^j$, 
$\gamma _9=1$,  
has exactly one sign 
change in the sequence of its coefficients. It is clear that 
as $0>a_{10}=\gamma _8+\beta _1$, and as $\beta _1\geq 0$, one must have 
$\gamma _8<0$. But then $\gamma _j<0$ for $j=0$, $\ldots$, $8$. For 
$j=4$, $\ldots$, $8$ one has 
$a_j=\gamma _{j-2}+\beta _1\gamma _{j-1}+\beta _0\gamma _j<0$ which means that 
the signs of $a_j$ do not define the sign pattern $\sigma ^0$.
\end{proof}   

\begin{rem}\label{remnotexist}
{\rm Lemma~\ref{lmnotexist} implies that the set $\bar{T}$ can contain only 
polynomials with negative roots of total multiplicity $8$ and positive roots 
of total multiplicity $1$ or $3$ (i.e. either one simple, 
or one simple and one double 
or one triple positive root), and no root at $0$ (Lemma~\ref{lmnot0bis}). 
Indeed, when approaching the border of $T$, 
the complex conjugate pair can coalesce into a double positive 
(but never nonpositive) root; the latter might eventually coincide with the 
simple positive root.}
\end{rem}
 
\section{The method of proof\protect\label{method}}

Consider $\mathbb{R}^{10}$ as the space 
of the coefficients of the polynomial $P(x,a)|_{a_{10}=-1}$. 
Suppose that there exists a monic polynomial $P(x,a^*)$ 
with signs of its coefficients 
as defined by the sign pattern $\sigma ^0$ (with $a_{10}=-1$), 
with $8$ distinct negative, 
a simple positive and two complex conjugate roots. Then for $a$ close to 
$a^*\in \mathbb{R}^{10}$ all polynomials $P(x,a)$ share with $P(x,a^*)$ 
these properties. Therefore the interior of the set $T$ is nonempty. 
In what follows we denote by $\Gamma$ the connected component of 
$T$ to which $a^*$ belongs. 
Denote by $-\delta$ the value of $a_9$ for $a=a^*$ (recall that this value 
is negative). 

\begin{lm}\label{lmKd} 
There exists a compact set $K\subset \bar{\Gamma}$ containing all points of 
$\bar{\Gamma}$ with $a_9\in [-\delta ,0)$. Hence there exists $\delta _0>0$ 
such that for every point of $\bar{\Gamma}$ one has 
$a_9\leq -\delta _0$, and for at least one point of $K$ and for no point 
of $\bar{\Gamma}\backslash K$ does one have $a_9=-\delta _0$.
\end{lm}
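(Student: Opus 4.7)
The plan is a compactness argument whose substance is a uniform bound on the coefficients of polynomials in $\bar{\Gamma}$ with $a_9\in[-\delta,0)$. Granting this bound, the set $L:=\bar{\Gamma}\cap\{a_9\in[-\delta,0]\}$ is closed in $\mathbb{R}^{10}$ and bounded, hence compact; set $K:=L$. The continuous function $a_9$ attains its maximum on $K$, at some value $-\delta_0$. This maximum is strictly negative, because $a_9<0$ on the open set $T$ while $a_9\neq 0$ on $\bar T\supset\bar{\Gamma}$ by Lemma~\ref{lmnot0}; and it satisfies $\delta_0\leq\delta$ because $a^*\in K$ realizes $a_9(a^*)=-\delta$. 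For any $a\in\bar{\Gamma}$ one then has $a_9(a)\leq-\delta_0$: either $a_9(a)\in[-\delta,0)$ and then $a\in K$, or $a_9(a)<-\delta\leq-\delta_0$. A point outside $K$ cannot attain $-\delta_0$, for such a point would lie in the slice and hence in $K$.

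For the boundedness step I would parametrize $a\in\bar\Gamma$ by its root data. By Remark~\ref{remnotexist} and Lemma~\ref{lmnotexist} the roots consist of eight negative reals $-x_1,\ldots,-x_8$ (with multiplicity, $x_i>0$), a positive part of total multiplicity one or three, and a complex pair $u\pm iv$ with $u>0$. Newton's first two identities read
$$p_1=1,\qquad p_2=1-2a_9\in[1,1+2\delta],$$
and in the generic case $p_2=\sum x_i^2+w^2+2u^2-2v^2$. Combined with the linear constraint $w+2u=1+\sum x_i$ coming from $a_{10}=-1$, a bound on $v$ yields bounds on all real roots and on $u$, hence on every coefficient of $P$ (each being an elementary symmetric polynomial in the roots). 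So it suffices to bound $v$. I would argue by contradiction: if $v^{(n)}\to\infty$ along a sequence $a^{(n)}\in\bar\Gamma$ with $a_9^{(n)}\in[-\delta,0]$, then rescaling via $Q^{(n)}(y):=(v^{(n)})^{-11}P(v^{(n)}y,a^{(n)})$ produces a sequence of monic degree $11$ polynomials whose roots are the original roots divided by $v^{(n)}$ and are uniformly bounded by the $p_2$ estimate. A convergent subsequence yields a limit $Q^\infty$ whose roots are eight nonpositive reals, one nonnegative real, and a complex pair $\gamma\pm i$ with $\gamma\geq 0$. The identities $a_{10}=-1$ and the uniform bound on $a_9$ force $e_1(Q^\infty)=e_2(Q^\infty)=0$, confining $Q^\infty$ to a low-dimensional explicit family; a finite case analysis then shows that no member of this family has all its coefficient signs compatible with $\sigma^0$.

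The main obstacle is this last case analysis. Controlling only $a_9$, or even $a_9$ together with $a_8$, is insufficient to rule out such a rescaling limit: for instance, the profile $\gamma=0$, $\sigma=4/3$, $x_i/v^{(n)}\to 1/6$ for every $i$ satisfies all constraints derived from $a_9<0$ and $a_8<0$ alone, but violates $a_5>0$. One must therefore bring the full sign pattern $\sigma^0$ into the argument. Once $v$ is bounded, every coefficient is automatically bounded and the rest of the proof is purely formal.
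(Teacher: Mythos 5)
Your reduction of the lemma to a uniform bound on the coefficients, and your derivation of $K$ and $\delta_0$ from that bound, are correct, and your rescaling--compactness strategy runs parallel to the paper's. But the proposal has a genuine gap exactly where you flag ``the main obstacle'': after extracting the limit $Q^\infty$ with $e_1(Q^\infty)=e_2(Q^\infty)=0$, you claim it is confined to a ``low-dimensional explicit family'' which a ``finite case analysis'' will eliminate. That family is not low-dimensional --- it has eight free parameters (eight nonpositive roots, one nonnegative root and $\gamma$, minus the two constraints) --- and you neither carry out nor outline the elimination; you only observe, correctly, that the constraints you retained (from $a_{10}$, $a_9$, and even $a_8$) are insufficient and that the full sign pattern must enter. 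As written, the proof is therefore incomplete.

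The missing idea is that no case analysis is needed, because the sign-pattern input you anticipate is already packaged in Lemma~\ref{lmnot0}. Your rescaling $Q^{(n)}(y)=(v^{(n)})^{-11}P(v^{(n)}y,a^{(n)})$, $v^{(n)}>0$, multiplies each coefficient $a_j$ by a positive number and scales all roots by $1/v^{(n)}$, so it maps $S$ into $S$; hence each $Q^{(n)}$ lies in $\bar S$, and so does $Q^\infty$, since $\bar S$ is closed. But the coefficient of $x^9$ of $Q^\infty$ equals $e_2(Q^\infty)=\lim a_9^{(n)}/(v^{(n)})^2=0$, while Lemma~\ref{lmnot0} states precisely that $a_9\neq 0$ on $\bar S$ (its proof, Descartes' rule applied to $\sigma^0_m$, is exactly the ``full sign pattern'' argument you sense is required). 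This one observation closes your proof; your illustrative profile with $\gamma=0$ is killed the same way. For comparison, the paper's own proof obtains bounded coefficients more cheaply: it rescales the putative unbounded sequence so that the largest modulus of a coefficient equals $1$ --- thereby avoiding Newton's identities, root estimates, and the separate treatment of the hyperbolic members of $\bar\Gamma$ (for which, in your setup, $p_2\leq 1+2\delta$ already bounds all roots) --- notes that the scaling factors $\beta_n\to\infty$ along a subsequence, so the rescaled $a_9$, equal to $a_9^n/\beta_n^2$, tends to $0$, and then invokes Lemma~\ref{lmnot0} in $\bar S$ exactly as above.
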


\begin{proof}
Suppose that there exists an unbounded sequence $\{ a^n\}$ of values of 
$a\in \bar{\Gamma}$ with $a_9^n\in [-\delta ,0)$. Hence one can perform 
rescalings $x\mapsto \beta _nx$, $\beta _n>0$, such that the largest of the 
moduli 
of the coefficients of the monic 
polynomials $Q_n:=(\beta _n)^{-11}P(\beta _nx,a^n)$ 
equals $1$. These polynomials belong to $\bar{S}$, not necessarily to 
$\bar{T}$ because $a_{10}$ after the rescalings, in general, 
is not equal to $-1$.
The coefficient of $x^9$ in $Q_n$ equals $a_9^n/(\beta _n)^2$. 
The sequence $\{ a^n\}$ being unbounded there exists a subsequence 
$\beta _{n_k}$ tending to $\infty$. 
This means that the sequence of monic polynomials $Q_{n_k}\in \bar{S}$ with 
bounded coefficients has as one of its limit points a polynomial 
in $\bar{S}$ with $a_9=0$ which contradicts Lemma~\ref{lmnot0}. 

Hence the tuple of coefficients $a_j$ of $P(x,a)\in \bar{\Gamma}$ 
with $a_9\in [-\delta ,0)$ remains 
bounded (hence the same holds true for the moduli of the roots of $P$) from 
which the existence of $K$ and $\delta _0$ follows. 
\end{proof}

The above lemma implies the existence of a polynomial $P_0\in \bar{\Gamma}$ 
with $a_9=-\delta _0$. We say that $P_0$ is $a_9$-{\em maximal}. Our aim is 
to show that no polynomial of $\bar{\Gamma}$ is $a_9$-maximal which 
contradiction will be the proof of Theorem~\ref{maintm}. 

\begin{defi}
{\rm A real univariate polynomial is {\em hyperbolic} if it has only real 
(not necessarily simple) roots. We denote by $H\subset \bar{\Gamma}$ the set 
of hyperbolic polynomials in $\bar{\Gamma}$. Hence these are monic degree 
$11$ polynomials having positive and negative roots of respective total 
multiplicities $3$ and $8$ (zero roots are impossible by Lemma~\ref{lmnot0}). 
By $U\subset \bar{\Gamma}$ we denote the set of 
polynomials in $\bar{\Gamma}$ having a complex conjugate pair, 
a simple positive 
root and negative roots of total multiplicity $8$. 
Thus $\bar{\Gamma}=H\cup U$ and $H\cap U=\emptyset$. We denote by 
$U_0$, $U_2$, $U_{2,2}$, $U_3$ and $U_4$ the subsets of $U$ for which the 
polynomial $P\in U$ has respectively $8$ simple negative roots, one double 
and $6$ simple negative roots, 
at least two negative roots of multiplicity $\geq 2$, one triple and 
$5$ simple negative roots 
and a negative root of multiplicity $\geq 4$.}
\end{defi}

The following lemma on hyperbolic polynomials 
will be used further in the proofs.

\begin{lm}\label{lmhyp}
Suppose that $V$ is a degree $d\geq 2$ 
hyperbolic polynomial with no root at $0$. Then: 

(1) $V$ does not have two or more 
consecutive vanishing coefficients. 

(2) If $V$ has a vanishing coefficient, then the signs of 
its surrounding two coefficients are opposite.

(3) The number of  
positive (of negative) roots of $V$ is equal to the number of sign changes 
in the sequence of its coefficients (in the one of $V(-x)$). 
\end{lm}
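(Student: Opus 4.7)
The plan is to derive all three parts from the single identity $c+c'=d$, where $c$ counts sign changes in the nonzero‑coefficient sequence of $V$ and $c'$ counts the corresponding sign changes for $V(-x)$. First I apply Descartes' rule of signs, with multiplicity, to both $V$ and $V(-x)$: this gives $pos\leq c$ and $neg\leq c'$, where $pos$ and $neg$ are the numbers of positive and negative roots of $V$ counted with multiplicity. Hyperbolicity together with $V(0)\neq 0$ yields $pos+neg=d$, so adding the two Descartes bounds produces the inequality $c+c'\geq d$.

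The bulk of the work is the matching upper bound $c+c'\leq d$. I would write $0=i_0<i_1<\cdots<i_m=d$ for the indices of the nonzero coefficients of $V$ (using that $a_0\neq 0$ by hypothesis and that the leading coefficient is nonzero) and set $k_j:=i_{j+1}-i_j$, so $\sum_j k_j=d$. Because the nonzero coefficients of $V(-x)$ differ from those of $V$ by the factors $(-1)^{i_j}$, the comparison of sign changes across the $j$th gap depends only on the parity of $k_j$: if $k_j$ is odd, then exactly one of $V$ and $V(-x)$ exhibits a sign change there (contribution exactly $1$ to $c+c'$), while if $k_j$ is even, both do or neither does (contribution $0$ or $2$). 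In particular every gap contributes at most $2$ to $c+c'$, and contributes at most $k_j$ when $k_j\in\{1,2\}$. Summing over gaps yields $c+c'\leq d$.

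Combining the two bounds forces $c+c'=d$ and pins down exactly which contributions are realized. Any gap with $k_j\geq 3$ would contribute at most $2<k_j$, creating a deficit that prevents equality; this rules out gaps of size $\geq 3$, which is precisely part (1) (no two consecutive vanishing coefficients). Every gap of size $k_j=2$ must then achieve the maximal contribution $2$, which happens if and only if the two flanking nonzero coefficients have opposite signs; this is part (2). Finally, since $pos\leq c$, $neg\leq c'$ and $pos+neg=d=c+c'$, both Descartes inequalities must be equalities, yielding $pos=c$ and $neg=c'$, which is part (3).

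The only delicate point is the parity bookkeeping in the gap analysis — correctly reading off, from the factors $(-1)^{i_j}$, the dichotomy between odd and even $k_j$. Once that is settled, the rest is a direct combination of classical Descartes' rule with the identity $pos+neg=d$.
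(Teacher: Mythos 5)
Your proof is correct, and it takes a genuinely different route from the paper's. The paper first proves parts (1) and (2) without any use of Descartes' rule, relying instead on closure properties of hyperbolic polynomials: every derivative $V^{(k)}$ of a hyperbolic $V$ is hyperbolic, and so is the reversal $x^dV(1/x)$ when $V(0)\neq 0$; iterating these operations reduces a block of $\ell-1\geq 1$ consecutive vanishing coefficients to a binomial $ax^{\ell}+b$ with $ab\neq 0$, which is hyperbolic only if $\ell=1$, or if $\ell=2$ and $ab<0$. Only then does the paper prove part (3), by a counting identity $\Phi+\Psi+2\Lambda=d$ (where $\Lambda$ is the number of vanishing coefficients; this identity itself uses (1) and (2)) combined with Descartes' rule and the forcing of equality from $pos+neg=d$. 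You do something different: you establish the inequality $c+c'\leq d$ for an arbitrary polynomial with nonzero constant and leading coefficients — no hyperbolicity needed — via the gap-parity dichotomy (odd gaps contribute exactly $1$ to $c+c'$, even gaps $0$ or $2$, hence always at most $k_j$), and then extract all three parts simultaneously from the equality case, since Descartes plus hyperbolicity gives $c+c'\geq d$. Your route is more unified and more elementary, in that it never invokes the hyperbolicity of derivatives or of the reversed polynomial; its byproduct is the general inequality $c+c'\leq d$, of independent interest. What the paper's route buys is that (1) and (2) stand as structural facts about hyperbolic polynomials, proved independently of Descartes' rule, after which the bookkeeping for (3) is very short. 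Both arguments are complete; your key observations — the factor $(-1)^{k_j}$ comparing sign changes across a gap, and the deficit created by any gap with $k_j\geq 3$ — are exactly right.
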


The proofs of the lemmas of this section except Lemma~\ref{lmKd} 
are given in 
Sections~\ref{prlm} (Lemmas~\ref{lmhyp} -- \ref{lmH3bis}), \ref{prlmbis} 
(Lemma~\ref{lmH4bis}) and \ref{prlmter} (Lemmas~\ref{lmH4ter} -- \ref{lmH6}).

\begin{lm}\label{lm24}
(1) No polynomial of $U_{2,2}\cup U_4$ is $a_9$-maximal.

(2) For each polynomial of $U_3$ there exists a polynomial of $U_0$ 
with the same values of $a_9$, $a_6$, $a_5$ and $a_1$.

(3) For each polynomial of $U_0\cup U_2$ there exists a polynomial of 
$H\cup U_{2,2}$ with the same values of $a_9$, $a_6$, $a_5$ and $a_1$.
\end{lm}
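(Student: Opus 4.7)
All three parts will be established by explicit perturbations $P\mapsto P+\varepsilon P_1$, with $P_1$ of degree at most $10$ and $a_{10}(P_1)=0$ so that the two leading coefficients of $P$ are preserved; for sufficiently small $\varepsilon$ the sign pattern $\sigma^0$ is maintained provided that the signs of those coefficients of $P$ which are allowed to vanish (only $a_1,a_5,a_6$, by Remark~\ref{remvanish}) are respected by the corresponding coefficients of $P_1$.

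For part~(1), the only requirement on $P_1$ is $a_{10}(P_1)=0$ together with $a_9(P_1)>0$. Given $P=(x+b)^rG(x)\in U_4$ with $r\ge 4$, I would take $P_1=(x+b)^{r-1}x^{10-r}$, a polynomial of degree $9$ whose coefficient of $x^9$ equals $1$. The factorisation
$$P+\varepsilon P_1=(x+b)^{r-1}\bigl[(x+b)G(x)+\varepsilon x^{10-r}\bigr]$$
shows that the multiplicity at $-b$ drops by one while a nearby simple negative root appears, and the remaining roots of $G$ are perturbed only slightly; the result lies in $\bar\Gamma$ with strictly larger $a_9$. For $P\in U_{2,2}$ with multiple negative roots $-b_1,-b_2$ of multiplicities $r_1,r_2\ge 2$, the analogous choice $P_1=(x+b_1)^{r_1-1}(x+b_2)^{r_2-1}x^{11-r_1-r_2}$ plays the same role.

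For part~(2), given $P=(x+b)^3G(x)\in U_3$, I would construct $P_1$ of degree at most $10$, divisible by $(x+b)$ but not by $(x+b)^2$, satisfying the five linear conditions $a_{10}(P_1)=a_9(P_1)=a_6(P_1)=a_5(P_1)=a_1(P_1)=0$. Writing $P_1=(x+b)R_1(x)$, one obtains
$$P+\varepsilon P_1=(x+b)\bigl[(x+b)^2G(x)+\varepsilon R_1(x)\bigr],$$
whose bracket has two distinct real roots near $-b$ precisely when $R_1(-b)G(-b)<0$; the triple root then splits into three simple negative roots, while the simple positive root, the complex conjugate pair, and the remaining five simple negative roots of $P$ persist for small $\varepsilon$, giving a polynomial in $U_0$ with unchanged $a_9,a_6,a_5,a_1$. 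The core technical step is exhibiting such a $P_1$ with a controllable sign of $R_1(-b)$: following the pattern from the proof of Lemma~\ref{lmnot0bis}, I would take $P_1=(x+b)\prod_i(x+h_i)\tilde G(x)$ with $h_i>0$ free parameters and $\tilde G$ a product of simple factors of $P/((x+b)^3)$, providing enough freedom to satisfy the four nontrivial linear constraints; the sign of $R_1(-b)$ is then an open condition that can be arranged, flipping $\varepsilon$ if necessary.

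For part~(3) the plan is topological. I work inside the compact affine slice $\Sigma=\bar\Gamma\cap\{a_9,a_6,a_5,a_1=\mathrm{const}\}$, compactness following from Lemma~\ref{lmKd}. The discriminant locus of $P$ (polynomials with a multiple root) is an algebraic hypersurface of codimension one in the ambient $\mathbb{R}^{10}$, so it meets the generic $6$-dimensional slice in a $5$-dimensional subvariety. Given $P\in U_0\cup U_2$, I would show that any generic continuous path in $\Sigma$ starting at $P$ must exit $U_0\cup U_2$ across this discriminant; by Remark~\ref{remnotexist} the only geometrically admissible crossings are (i)~the complex conjugate pair merging into a double real root, necessarily positive by the negative-multiplicity count, landing the path in $H$, or (ii)~two further negative roots coalescing, landing the path in $U_{2,2}$. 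The higher-codimensional strata $U_3$ and $U_4$ can be avoided by genericity, or else reduced to the previous cases by parts~(1) and~(2). The main obstacle will be excluding the pathological scenario $\Sigma\subset U_0\cup U_2$; this I expect to follow from a dimension count using the boundedness of $\bar\Gamma$ from Lemma~\ref{lmKd} together with the fact that hyperbolic polynomials do lie in $\bar\Gamma$ (since the complex pair can be forced to coalesce, as already exploited in Remark~\ref{remnotexist}).
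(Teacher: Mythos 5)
Part (1) of your proposal has a genuine gap. Your preamble correctly identifies the constraint — the coefficients of $P_1$ at the positions where coefficients of $P$ may vanish must respect the signs of $\sigma^0$ — but your chosen $P_1$ violates it. Take $P=(x+b)^4G\in U_4$: then $P_1=(x+b)^3x^6=x^9+3bx^8+3b^2x^7+b^3x^6$ has $x^6$-coefficient $b^3>0$, while $\sigma^0$ demands $a_6\le 0$ and Remark~\ref{remvanish} explicitly allows $a_6(P)=0$ on $\bar{T}$. In that case $P+\varepsilon P_1$ has $a_6=\varepsilon b^3>0$, so it lies outside $\bar{\Gamma}$ and witnesses nothing; the same failure occurs for every $r\ge 4$ and for $U_{2,2}$ whenever $r_1+r_2\ge 5$. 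This is precisely the case the paper's proof is built to handle: it keeps the full multiple-root structure, treats the roots and the complex-pair parameters as coordinates, and shows that the Jacobians of $(a_{10},a_9,a_1,a_5)$ and of $(a_{10},a_9,a_1,a_6)$ with respect to those parameters have nonzero determinants, so the possibly-vanishing coefficients can be held fixed (e.g.\ at $0$) while $a_9$ is raised. A second defect in the same part: for $U_{2,2}$ with a third repeated negative root (say multiplicities $(2,2,2,1,1)$), your $P_1$ is not divisible by $(x+b_3)$, and since the sign of $\varepsilon$ is forced (you need $a_9$ to increase), the unprotected double root at $-b_3$ may split into a complex pair, dropping the total negative multiplicity below $8$ and again leaving $\bar{\Gamma}$ (cf.\ Remark~\ref{remnotexist}).

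Part (2) is the right idea, but the existence of your $P_1$ is asserted rather than proved; note that your own ansatz contains the explicit solution $R_1=x^2$, i.e.\ $P_1=x^2(x+b)=x^3+bx^2$, which satisfies all five linear conditions at once and has $R_1(-b)=b^2\neq 0$ — this is exactly the paper's choice (it changes only $a_2,a_3$, which are strictly positive by Lemma~\ref{lmnot0}). Part (3), as you half-admit, is not a proof: excluding the scenario that the slice stays inside $U_0\cup U_2$ is the whole content of the statement, and no dimension count can supply it — the discriminant having codimension one does not prevent a compact set from missing it, and a ``generic path'' has no reason to exit $U_0\cup U_2$ rather than wander, or cycle through $U_3$ and back into $U_0$ (which is all that part (2) guarantees). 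The paper instead builds explicit monotone deformations inside the slice: first $P-t$ (changing only $a_0$), run to the first parameter value $t_0$ at which a multiple root appears — this happens at finite $t_0$ because the positive local maxima of $P$ on the negative half-axis decrease; then, if only one double negative root $-v$ was created, the family $P_s=P-t_0-s(x^8-2v^4x^4+v^8)$, which changes only $a_8,a_4,a_0$ with the correct signs, keeps the double root at $-v$, and decreases $P_s$ pointwise off $\pm v$, forcing at some finite $s_0$ either a second double negative root (landing in $U_{2,2}$) or positive roots of total multiplicity $3$ (landing in $H$). That pointwise monotonicity is what guarantees termination; your scheme has no substitute for it.
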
 

The lemma implies that if there exists an $a_9$-maximal polynomial in 
$\bar{\Gamma}$, 
then there exists such a polynomial in $H$. So from now on we aim 
at proving that $H$ contains no such polynomial hence $H$ and $\bar{\Gamma}$ 
are empty.

\begin{lm}\label{lmH2}
There exists no polynomial in $H$ having exactly two distinct real roots.
\end{lm}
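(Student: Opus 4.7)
The plan is to reduce the lemma to a one-parameter family of polynomials and extract a pair of coefficients whose sign requirements clash. Since $H\subset\bar\Gamma\subset\bar T$ consists of hyperbolic polynomials, Remark~\ref{remnotexist} forces the total multiplicity of the positive (resp.\ negative) roots of any $P\in H$ to be $3$ (resp.\ $8$). A polynomial in $H$ with only two distinct real roots must therefore have one root positive of multiplicity $3$ and the other negative of multiplicity $8$, i.e.
\[
P(x)=(x-w)^3(x+b)^8,\qquad w,b>0.
\]

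The next step is to compute just two of the coefficients by a direct binomial expansion, namely
\[
a_{10}=8b-3w,\qquad a_1=w^{2}b^{7}(3b-8w).
\]
The sign pattern $\sigma^0$ forces $a_{10}<0$ (in fact $a_{10}=-1$ on $\bar T$), which gives $w>\tfrac{8b}{3}$, while the same pattern forces $a_1\ge 0$ (equality being permitted at the boundary by Remark~\ref{remvanish}), which gives $w\le\tfrac{3b}{8}$. Since $b,w>0$ and $\tfrac{8}{3}>\tfrac{3}{8}$, these two inequalities are incompatible, and no such $P$ can exist.

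The whole argument is essentially a two-line binomial computation once the structural step $P=(x-w)^3(x+b)^8$ is in place, so there is no substantive obstacle. The only subtle point to watch for is that on $\bar T$ one has the relaxed inequality $a_1\ge 0$ rather than a strict one; but the large gap between $\tfrac{8}{3}$ and $\tfrac{3}{8}$ easily absorbs this relaxation, which is why it suffices to inspect only the extremal coefficients $a_1$ and $a_{10}$ without having to verify the signs of the intermediate coefficients.
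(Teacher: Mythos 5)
Your proof is correct and takes essentially the same route as the paper: the identical factorization $P=(x+b)^8(x-w)^3$ with $b,w>0$, the same two coefficients $a_{10}=8b-3w$ and $a_1=b^7w^2(3b-8w)$, and the same incompatibility of their sign requirements. The only (harmless) difference is that you relax the paper's conditions $a_{10}=-1$, $a_1>0$ to $a_{10}<0$, $a_1\ge 0$, which if anything treats the closure $\bar{T}$ slightly more carefully.
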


\begin{lm}\label{lmHtriple}
The set $H$ contains no polynomial having one triple positive root and  
negative roots of total multiplicity $8$.
\end{lm}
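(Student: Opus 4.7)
The plan is to suppose that such a polynomial $P\in H$ exists and derive a contradiction from just two of its coefficient sign conditions combined with Cauchy--Schwarz. Since $P$ has a triple positive root $w>0$ and eight strictly negative roots counted with multiplicity, I factor
$$P(x)=(x-w)^3Q(x),\qquad Q(x)=\prod_{i=1}^{8}(x+b_i),$$
with every $b_i>0$. Writing $Q(x)=\sum_{k=0}^{8}q_kx^k$, the elementary symmetric function identities give $q_8=1$, $q_7=\sum_i b_i$, $q_0=\prod_i b_i$ and $q_1=q_0\sum_i 1/b_i$.

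Next I expand $(x-w)^3=x^3-3wx^2+3w^2x-w^3$ and read off only the two outermost nontrivial coefficients of $P$:
$$a_{10}=q_7-3w,\qquad a_1=3w^2q_0-w^3q_1.$$
Since $P\in H\subset\bar{T}$, the sign pattern $\sigma^0$ forces $a_{10}\le 0$ and $a_1\ge 0$, which translate into the two estimates
$$\sum_{i=1}^{8}b_i\le 3w\qquad\text{and}\qquad\sum_{i=1}^{8}\frac{1}{b_i}\le\frac{3}{w}.$$

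Multiplying these two estimates yields $\bigl(\sum b_i\bigr)\bigl(\sum 1/b_i\bigr)\le 9$. But the Cauchy--Schwarz (equivalently, AM--HM) inequality applied to the eight positive numbers $b_i$ gives the unconditional lower bound $\bigl(\sum b_i\bigr)\bigl(\sum 1/b_i\bigr)\ge 8^2=64$, an immediate contradiction. I foresee no serious obstacle; the key observations are that only the two extreme coefficients $a_{10}$ and $a_1$ are needed (none of the middle ones), and that the scale parameter $w$ cancels in the product of the two bounds, so the triple-root hypothesis leaves no room to evade Cauchy--Schwarz.
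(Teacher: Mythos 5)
Your proof is correct: the coefficient formulas $a_{10}=q_7-3w$ and $a_1=3w^2q_0-w^3q_1$ are right, the closed sign conditions $a_{10}\le 0$, $a_1\ge 0$ do hold on $H\subset\bar{T}$, dividing the second one by $w^2q_0>0$ is legitimate, and multiplying the two resulting bounds (all quantities positive) gives $\bigl(\sum_i b_i\bigr)\bigl(\sum_i 1/b_i\bigr)\le 9$, which indeed contradicts the Cauchy--Schwarz (AM--HM) bound $\ge 64$. This is, however, a genuinely different execution from the paper's. The paper writes $P=(x+u_1)\cdots(x+u_8)(x-\xi)^3$, uses the \emph{equality} $a_{10}=-1$ to eliminate $\xi=\xi_*:=(u_1+\cdots+u_8+1)/3$, computes $27\,a_1|_{\xi=\xi_*}=-\Xi\,(u_1+\cdots+u_8+1)^2$ by computer algebra (MAPLE), where $\Xi=-u_1\cdots u_8+X+Y$ with $X=u_1\cdots u_8\sum_{i\neq j}u_i/u_j$ and $Y=u_1\cdots u_8\sum_i 1/u_i$, and then shows $\Xi>0$ by absorbing the single negative monomial $-u_1\cdots u_8$ into two terms of $X$; this forces $a_1<0$, a contradiction. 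Unwinding the algebra, $\Xi>0$ is equivalent to $(\sum u_i+1)(\sum 1/u_i)>9$, so your inequality $\bigl(\sum u_i\bigr)\bigl(\sum 1/u_i\bigr)\ge 64$ is precisely the mechanism hiding behind the paper's positivity of $\Xi$. What your route buys: no normalization of $a_{10}$, no symbolic expansion, no computer verification, and it exposes how much slack there is ($64$ versus $9$); what the paper's route buys is consistency with the explicit-coefficient computations it uses in the harder lemmas, where no such clean inequality is available.
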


Hence a polynomial in $H$ (if any) has a double and a simple positive roots 
and negative roots of total multiplicity~$8$.
 
\begin{lm}\label{lmH3}
There exists no polynomial $P\in H$ having exactly three distinct real roots 
and satisfying the conditions $\{ a_1=0, a_5=0\}$ or 
$\{ a_1=0, a_6=0\}$.
\end{lm}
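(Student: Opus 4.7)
By Lemma~\ref{lmHtriple} and Remark~\ref{remnotexist}, a polynomial $P\in H$ has its positive roots consisting of a double root $v$ and a simple root $w$, and $8$ negative roots counted with multiplicity. Requiring \emph{exactly three distinct} real roots forces all eight negative roots to coincide at a common value $-u$, so $P(x)=(x-v)^{2}(x-w)(x+u)^{8}$ with $u,v,w>0$ and $v\neq w$. After the rescaling $x\mapsto wx$ we may normalize $w=1$ and write $t:=v$; then $u,t>0$ and $t\neq 1$. Using $P'(0)/P(0)=8/u-2/v-1/w$, the equation $a_{1}=0$ becomes $u(t+2)=8t$, i.e.\ $u=8t/(t+2)$.

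The plan is to substitute $u=8t/(t+2)$ into the expansion $P=(x^{3}-(2t+1)x^{2}+(t^{2}+2t)x-t^{2})(x+u)^{8}$ and verify that each of the two remaining scenarios $a_{5}=0$ and $a_{6}=0$ forces some other coefficient to violate the sign pattern $\sigma^{0}$. A direct computation, after clearing the positive denominator $(t+2)^{k}$, shows that each coefficient of interest has the sign of a single polynomial in $t$:
\begin{equation*}
a_{10}\ \sim\ -(2t^{2}-59t+2),\qquad a_{5}\ \sim\ g(t),\qquad a_{6}\ \sim\ g_{1}(t),\qquad a_{9}\ \sim\ h(t),
\end{equation*}
where $g(t):=9t^{3}-74t^{2}+44t-56$, $g_{1}(t):=15t^{3}-230t^{2}+404t-200$ and $h(t):=t^{3}-122t^{2}+1484t-120$.

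For $\{a_{1}=0,\,a_{5}=0\}$ the parameter $t$ must be a positive root of $g$. Analysing $g'(t)=27t^{2}-148t+44$ shows both critical values of $g$ are negative, so $g$ has a unique real root $t_{0}$. From $g(1)=-77<0$ and $g(29)=158487>0$ we localize $1<t_{0}<29$. The real roots of $2t^{2}-59t+2$ are $(59\pm\sqrt{3465})/4$; since $55<\sqrt{3465}<59$ these lie in $(0,1)$ and in $(29,30)$, so on the whole interval $[1,29]$ we have $2t^{2}-59t+2<0$. This gives $a_{10}>0$ at $t_{0}$, contradicting the requirement $a_{10}<0$ of $\sigma^{0}$.

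The case $\{a_{1}=0,\,a_{6}=0\}$ is handled analogously. An analysis of $g_{1}'(t)=45t^{2}-460t+404$ yields a unique real root $t_{1}$ of $g_{1}$. From $g_{1}(13)=-863<0<g_{1}(13.5)=242.125$ we get $t_{1}\in(13,13.5)$; on this interval $h'(t)=3t^{2}-244t+1484$ is negative (its real roots lie near $6.6$ and $74.7$), so $h$ is decreasing with $h(13.5)=139.875>0$, forcing $h(t_{1})>0$, i.e.\ $a_{9}>0$, again violating $\sigma^{0}$. The main obstacle is the purely algebraic bookkeeping required to isolate the four polynomial representatives of $a_{10},a_{5},a_{6},a_{9}$ displayed above; once these are in hand, the proof reduces to a handful of numerical inequalities verifiable by exact integer arithmetic.
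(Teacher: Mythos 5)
Your proof is correct, and your initial reduction coincides with the paper's: by Lemma~\ref{lmHtriple} (together with Remark~\ref{remnotexist}) the polynomial must have the form $(x+u)^8(x-w)^2(x-\xi)$ with all parameters positive. Where you genuinely diverge is in the verification. The paper keeps the normalization $a_{10}=-1$ and simply asserts that a numerical (MAPLE) check of the two systems $\{a_{10}=-1,\,a_1=0,\,a_5=0\}$ and $\{a_{10}=-1,\,a_1=0,\,a_6=0\}$ shows every real solution has a nonpositive component. You instead use the scaling action to place the simple positive root at $1$ (trading the equation $a_{10}=-1$ for the inequality $a_{10}<0$, which is legitimate since rescaling preserves both the signs and the vanishing of coefficients), solve $a_1=0$ explicitly as $u=8t/(t+2)$, and reduce the signs of $a_{10},a_9,a_5,a_6$ to those of explicit univariate polynomials. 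I checked all four eliminations and they are exact: $a_{10}=-(2t^2-59t+2)/(t+2)$, $a_9=t\,h(t)/(t+2)^2$, $a_5=56u^3t^2\,g(t)/(t+2)^3$ and $a_6=28u^2t^2\,g_1(t)/(t+2)^3$ with your $g$, $g_1$, $h$. The contradictions then follow from elementary root localization, and the strict inequalities $a_{10}<0$, $a_9<0$ do hold on all of $H\subset\bar{T}$ (by the definition of $T$ and Lemma~\ref{lmnot0}), so both conclusions ($a_{10}>0$, resp.\ $a_9>0$) are genuine contradictions. What your route buys is a certificate checkable by exact integer arithmetic, with no reliance on a numerical solver having found \emph{all} real solutions of a zero-dimensional polynomial system -- which is the one rigor concern one could raise about the paper's own proof.

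Two small repairs. First, your bound $55<\sqrt{3465}<59$ only places the larger root of $2t^2-59t+2$ in $(28.5,29.5)$, which does not by itself put it beyond $29$; either sharpen to $57<\sqrt{3465}$ (since $57^2=3249<3465$), or, more simply, note that the convex quadratic satisfies $2-59+2=-55<0$ and $2\cdot 29^2-59\cdot 29+2=-27<0$, hence is negative on all of $[1,29]$, which is all you need. Second, your uniqueness claims for the real roots of $g$ and $g_1$ (via both critical values being negative) are true but involve irrational critical points; if you want the whole argument in integer arithmetic, replace them by the observation that both cubics have negative discriminant, so each has exactly one real root.
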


It follows from the lemma and from Lemma~\ref{lmnot0} that a polynomial  
$P\in H$ having exactly three distinct roots can satisfy at most one of the 
conditions $a_1=0$, $a_5=0$ and 
$a_6=0$. 

\begin{lm}\label{lmH3bis}
No polynomial in $H$ having exactly three distinct real roots is $a_9$-maximal.
\end{lm}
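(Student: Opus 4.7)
Let $P_0 \in H$ have exactly three distinct real roots. By Lemma~\ref{lmHtriple} and Remark~\ref{remnotexist}, the only possibility is
\[
P_0(x) = (x-u)^2(x-v)(x+w)^8, \qquad u,v,w>0, \ u\neq v.
\]
Newton's identities give $-a_{10}=e_1=2u+v-8w$ and $a_9=e_2=\tfrac12(e_1^2-p_2)$ with $p_2=2u^2+v^2+8w^2$. Since $a_{10}=-1$ forces $e_1=1$, this specializes to
\[
a_9 \;=\; \tfrac12 - u^2 - \tfrac{v^2}{2} - 4w^2.
\]
So $a_9$-maximality on the $2$-dimensional constraint surface $\{2u+v-8w=1,\ u,v,w>0\}$ is equivalent to minimizing the positive definite quadratic form $u^2 + v^2/2 + 4w^2$.

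A Lagrange multiplier computation yields a unique critical point $(u,v,w)=(1/11,1/11,-1/11)$, which violates both $w>0$ and $u\neq v$, hence is outside the admissible set. Consequently $\nabla a_9$ is nowhere zero on the admissible part of the constraint surface. One can then exhibit an explicit tangent direction of increase, for instance $(\dot u,\dot v,\dot w) = (0,-1,-1/8)$: it satisfies $2\dot u+\dot v-8\dot w=0$ and gives $\dot a_9 = v+w>0$. The one-parameter family
\[
P_t(x) = (x-u)^2\,\bigl(x-v+t\bigr)\,\bigl(x+w-\tfrac{t}{8}\bigr)^8
\]
then has $a_{10}(P_t)=-1$, $a_9(P_t)>a_9(P_0)$ for small $t>0$, and preserves the open conditions $u,v,w>0$ and $u\neq v$.

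If all coefficients of $P_0$ are nonzero, continuity guarantees $P_t$ has the sign pattern $\sigma^0$ for small $t$, so $P_t\in\bar\Gamma$, contradicting the $a_9$-maximality of $P_0$. Otherwise, by Lemmas~\ref{lmnot0}, \ref{lmnot0bis}, and the ``at most one'' clause of Lemma~\ref{lmH3}, exactly one of $a_1,a_5,a_6$ vanishes at $P_0$. Then the above direction may push the vanishing $a_j$ onto the forbidden side of zero. To recover, enlarge the deformation by allowing the 8-fold root to split as $\prod_{i=1}^{8}(x+w_i)$ with $\sum w_i = 8w$. Among these perturbations the 6-dimensional subspace preserving $\sum w_i^2$ leaves $a_9$ unchanged to leading order, so one may first use it to push $a_j$ strictly into its required half-space, and then apply the direction of Step~3 to strictly increase $a_9$; the resulting polynomial lies in $\bar\Gamma$ and contradicts maximality.

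The main obstacle lies in this last step: one must show that the gradient of the vanishing coefficient $a_j$, projected onto the $a_9$-preserving splitting directions, is nonzero, so that $a_j$ can indeed be moved in either sign. This reduces to a Vandermonde-type nondegeneracy statement for the symmetric-function expressions of $a_1,a_5,a_6$ in $(u,v,w_1,\dots,w_8)$, and must be verified case by case for $j\in\{1,5,6\}$; the computation is routine but forms the real technical content of the proof.
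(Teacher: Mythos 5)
Your reduction to the family $(x-u)^2(x-v)(x+w)^8$, the Newton-identity formula $a_9=\tfrac12-u^2-\tfrac{v^2}{2}-4w^2$ on the slice $a_{10}=-1$, and the explicit increasing direction correctly dispose of the case where all coefficients of $P_0$ are nonzero; this is the same stratum the paper works in, and for that case your argument is cleaner than the paper's (which instead computes $\det\partial(a_{10},a_9,a_1)/\partial(u,w,\xi)$ and applies the implicit function theorem). The genuine gap is in the vanishing-coefficient case, which is where almost all of the paper's effort goes. Your plan is to split the eightfold root as $\prod_{i=1}^{8}(x+w_i)$ with $\sum w_i$ fixed, and to use these splitting directions to move the vanishing coefficient $a_j$ ($j\in\{1,5,6\}$) at first order while leaving $a_9$ unchanged at first order. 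That nondegeneracy is not ``routine but verifiable'' --- it is identically false. Every coefficient of $P$ is a symmetric function of $(w_1,\dots,w_8)$, and at the fully degenerate point $w_1=\cdots=w_8=w$ the gradient of any symmetric function must, by symmetry, be proportional to $(1,\dots,1)$; hence its derivative along every direction with $\sum\dot w_i=0$ vanishes. (Relatedly, by Cauchy--Schwarz the constraints $\sum w_i=8w$ and $\sum w_i^2=8w^2$ define the single point $(w,\dots,w)$, so your ``$6$-dimensional subspace preserving $\sum w_i^2$'' exists only infinitesimally.) Thus splitting moves $a_j$ only at second order, exactly like $a_9$, and your order-of-magnitude scheme (first-order gain on $a_j$ against second-order loss on $a_9$) collapses into an uncontrolled comparison of second-order constants.

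Moreover, enlarging the parameter space buys nothing at first order: at the diagonal point all columns $\partial a_k/\partial w_i$ of the Jacobian are equal (each is $\tfrac18\,\partial a_k/\partial w$ computed in the three-parameter family), so the Jacobian of $(a_{10},a_9,a_j)$ in the ten parameters $(u,v,w_1,\dots,w_8)$ has the same rank as in the three parameters $(u,v,w)$. The statement you actually need is therefore exactly the paper's: nonvanishing of $\det\partial(a_{10},a_9,a_j)/\partial(u,v,w)$ for $j=1,5,6$. The paper computes these determinants explicitly and finds that they \emph{do} vanish on exceptional loci (in its coordinates $u=7w$, $5u=3w$ and $w=3u$), where it must instead derive contradictions from the signs of other coefficients ($a_3<0$, or $a_1>0$ combined with $a_{10}<0$, or $a_6>0$). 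So even the corrected form of your nondegeneracy claim fails on certain sets, and arguments of a genuinely different kind are required there; your proposal has no counterpart for either the determinant computation or the exceptional loci, which constitute the real content of the paper's proof.
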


Thus an $a_9$-maximal polynomial in $H$ (if any) must have at least four 
distinct real roots.

\begin{lm}\label{lmH4bis}
The set $H$ contains no polynomial having a double and a simple positive roots 
and exactly two distinct negative roots of total multiplicity $8$, and which 
satisfies either the conditions $\{ a_1=a_5=0\}$ or $\{ a_1=a_6=0\}$.
\end{lm}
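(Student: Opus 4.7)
The plan is to reformulate the problem by grouping the double positive root of $P$ with the negative roots. Write $P(x)=(x-h)\,C(x)$, where
$$
C(x):=(x-g)^{2}(x+u)^{k}(x+v)^{8-k}=\sum_{j=0}^{10}c_{j}x^{j}
$$
is a hyperbolic polynomial of degree $10$ with a double positive root at $g$ and $8$ negative roots; after rescaling, set $g=1$. Then $a_{j}=c_{j-1}-h\,c_{j}$, so the hypothesis $a_{1}=0$ reads $c_{0}=h c_{1}$, while $a_{5}=0$ (resp.\ $a_{6}=0$) reads $c_{4}=h c_{5}$ (resp.\ $c_{5}=h c_{6}$). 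Eliminating $h$ from these two equations gives the single $2\times 2$ minor relation
$$
c_{0}c_{5}=c_{1}c_{4}\qquad\text{(resp.\ $c_{0}c_{6}=c_{1}c_{5}$).}\qquad(\star)
$$

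Next I examine the sign pattern of $C$. By Lemma~\ref{lmhyp}(3) it has exactly $2$ sign changes in its coefficient sequence, matching the $2$ positive roots. Together with $c_{0}>0$, $c_{10}=1>0$ and $c_{1}>0$ (forced by $h=c_{0}/c_{1}>0$), this pins the sign pattern down to the shape $(+,\dots,+,-,\dots,-,+,\dots,+)$, with a single negative valley in the middle. The argument then splits according to whether the coefficients appearing in $(\star)$ sit in the initial positive block (Case~A) or inside the negative valley (Case~B).

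In Case~A, all of $c_{0},c_{1},c_{2},c_{3},c_{4},c_{5}$ (resp.\ $c_{0},\dots,c_{6}$) are strictly positive, and I apply Newton's inequalities for the hyperbolic $C$, namely $c_{j}^{2}\ge K_{j}\,c_{j-1}c_{j+1}$ with $K_{j}=(j+1)(11-j)/(j(10-j))$, multiplying them for $j=1,2,3,4$ (resp.\ $j=1,\dots,5$). After telescoping, the surviving factors simplify to
$$
c_{1}c_{4}\;\ge\;K_{1}K_{2}K_{3}K_{4}\,c_{0}c_{5}\;=\;\tfrac{25}{3}\,c_{0}c_{5}\qquad\text{(resp.\ $c_{1}c_{5}\ge 12\,c_{0}c_{6}$).}
$$
Since $c_{1}c_{4}>0$ and $25/3>1$, this directly contradicts the equality $(\star)$.

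The harder case is Case~B, where $c_{4}$ and $c_{5}$ (resp.\ $c_{5}$ and $c_{6}$) are both negative and lie inside the valley. Here Newton's inequality is trivial across the sign change, so the chain of Case~A collapses. To close this case I plan to combine three ingredients: (i) the linear constraints $C(1)=0=C'(1)$ coming from the double root of $C$ at $x=1$, which force $\sum c_{j}=\sum j c_{j}=0$; (ii) Newton's inequalities applied separately inside each positive block of $C$, yielding log-concavity of consecutive positive coefficients; and (iii) the remaining sign conditions $a_{2},a_{3}>0$ and $a_{7},a_{8},a_{9},a_{10}<0$ imposed by $\sigma^{0}$, which translate into bounds of the form $c_{i}>h c_{i+1}$ or $c_{i}<h c_{i+1}$ at each relevant index. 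A finite case split on the precise endpoints of the negative valley, combined with the $\mathbb{Z}_{2}$ symmetry $u\leftrightarrow v$, $k\leftrightarrow 8-k$, should then rule out $(\star)$. The main obstacle is that none of (i)--(iii) suffices on its own; the contradiction only emerges from their simultaneous interplay across the sign change.
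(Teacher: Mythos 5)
Your reduction $P=(x-h)C(x)$, the elimination of $h$ from $a_1=0$ and $a_5=0$ (resp.\ $a_6=0$) giving $c_0c_5=c_1c_4$ (resp.\ $c_0c_6=c_1c_5$), and the Newton-inequality argument of your Case~A are correct --- the constants $25/3$ and $12$ check out --- and this is a genuinely different, more conceptual route than the paper's, which instead splits according to the multiplicities $(7,1)$, $(6,2)$, $(5,3)$, $(4,4)$ of the two negative roots, normalizes one root to $-1$, and eliminates each case by explicit sign analysis of the coefficients as polynomials in the two remaining parameters. But your proposal does not prove the lemma, because Case~B is left entirely open: you list three ingredients (the relations $C(1)=C'(1)=0$, Newton's inequalities inside the positive blocks, and the sign conditions coming from $\sigma^0$), yet you derive no contradiction from them; you only assert that a ``finite case split \ldots should then rule out $(\star)$'' and you yourself concede that none of the ingredients suffices. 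Case~B --- both $c_4,c_5<0$ (resp.\ $c_5,c_6<0$) inside the valley --- is exactly the configuration in which $(\star)$ has consistent signs on both sides, so it is the heart of the lemma; it is the analogue of the hard subregions that occupy most of the paper's Section~6, and nothing in your outline replaces that work.

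There is also a smaller gap in your case enumeration. Since $c_4$ and $c_5$ (resp.\ $c_5$ and $c_6$) are proportional by the positive factor $h$ and cannot both vanish (Lemma~\ref{lmhyp}, part~(1), as $C$ has no zero root), they share a sign; if that sign is positive they could a priori lie in the \emph{trailing} positive block, with the valley strictly below them (e.g.\ valley $\subset\{2,3\}$) --- a configuration covered by neither your Case~A nor your Case~B, and in which your telescoped Newton chain breaks at the negative coefficients it must pass through. This third case can be excluded, but only by invoking the sign conditions you do not use here: by Lemma~\ref{lmnot0} and Remark~\ref{remvanish} one has $a_2,a_3,a_4>0$ strictly (and $a_5>0$ strictly in the case $\{a_1=a_6=0\}$), and then, for instance, $c_3<0<c_4$ forces $a_4=c_3-hc_4<0$, while $c_2<0$, $c_3\geq 0$ forces $a_3=c_2-hc_3<0$, contradictions. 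So as it stands your argument establishes the lemma only when all the coefficients entering $(\star)$ are positive; the essential valley case remains unproven, and the claim should be presented as partial progress rather than a proof.
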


\begin{lm}\label{lmH4ter}
The set $H$ contains no $a_9$-maximal 
polynomial having exactly four distinct real roots 
and satisfying exactly one or none of the conditions 
$a_1=0$, $a_5=0$ or $a_6=0$.
\end{lm}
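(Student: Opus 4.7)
I would assume that an $a_9$-maximal polynomial $P_0\in H$ with the stated properties exists and derive a contradiction by producing a perturbation of $P_0$ inside $\bar{\Gamma}$ that strictly increases $a_9$. By Remark~\ref{remnotexist} the polynomial $P_0$ has positive roots of total multiplicity $3$, and by Lemma~\ref{lmHtriple} they cannot all coincide, so $P_0$ has a simple positive root $u$ and a double positive root $v$; with exactly four distinct real roots the eight negative roots then occupy two distinct values $-p$ and $-q$ with multiplicities $m$ and $n=8-m$. Thus
$$P_0=(x-u)(x-v)^2(x+p)^m(x+q)^n,\qquad u,v,p,q>0,$$
and we set $Q:=P_0/(x-v)^2=(x-u)(x+p)^m(x+q)^n$, a monic polynomial of degree $9$. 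The central one-parameter deformation is
$$P_\eta\;:=\;(x-u)\bigl((x-v)^2+\eta\bigr)(x+p)^m(x+q)^n\;=\;P_0+\eta\,Q,\qquad \eta>0,$$
which replaces the double positive root by the complex conjugate pair $v\pm i\sqrt{\eta}$ of positive real part; since $\deg Q=9$ this preserves $a_{10}=-1$, and since $Q$ is monic one has $a_9(P_\eta)=a_9(P_0)+\eta>a_9(P_0)$.

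By Lemmas~\ref{lmnot0} and \ref{lmnot0bis}, every coefficient of $P_0$ outside $\{a_1,a_5,a_6\}$ is nonzero of the sign prescribed by $\sigma^0$, so small perturbations preserve those signs automatically. If additionally none of $a_1,a_5,a_6$ vanishes at $P_0$, then for small $\eta>0$ the polynomial $P_\eta$ lies in $\bar{\Gamma}$ with $a_9(P_\eta)>a_9(P_0)$, contradicting the $a_9$-maximality of $P_0$. Assume therefore that exactly one coefficient $a_{j_0}$, $j_0\in\{1,5,6\}$, vanishes at $P_0$. Expanding $R:=(x+p)^m(x+q)^n=\sum_k b_k x^k$ with all $b_k>0$, one obtains $Q=\sum_j(b_{j-1}-ub_j)x^j$, so the sign of $a_{j_0}(P_\eta)=\eta(b_{j_0-1}-ub_{j_0})$ is the sign of $q_{j_0}:=b_{j_0-1}-ub_{j_0}$; if this matches the sign demanded by $\sigma^0$, pure coalescing again works and we are done.

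The remaining, harder, case is that $q_{j_0}$ has the sign \emph{opposite} to that demanded by $\sigma^0$. To correct this I would enlarge the perturbation family by simultaneously varying the four roots inside the three-dimensional stratum $\{u+2v-mp-nq=1\}$ of hyperbolic polynomials with the same multiplicity pattern. On this stratum the identity $a_9=\frac{1}{2}(1-u^2-2v^2-mp^2-nq^2)$ yields $\partial_v a_9=2(u-v)$, $\partial_p a_9=-m(u+p)$, $\partial_q a_9=-n(u+q)$; none of these vanishes (because $u\neq v$ and $u,p,q>0$), so $a_9$ has no critical point inside the stratum. The $a_9$-maximality of $P_0$ then forces the linear functionals $\delta a_9$ and $\delta a_{j_0}$ on the tangent space to be proportional, $\delta a_{j_0}=k\,\delta a_9$ with $k$ of sign opposite to $\sigma^0(j_0)$, since otherwise a within-$H$ move would increase $a_9$ while respecting the sign of $a_{j_0}$. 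Combining this with the coalescing move produces total first-order change $(\eta+\alpha,\,\eta q_{j_0}+k\alpha)$, and one can simultaneously realize $\delta a_9>0$ and $\delta a_{j_0}$ of the allowed sign provided $|k|>|q_{j_0}|$. The main obstacle is therefore to exclude the degenerate configuration $|k|\le|q_{j_0}|$; I expect this to be dispatched by an explicit case analysis over $j_0\in\{1,5,6\}$ and the multiplicity patterns $(m,n)\in\{(1,7),(2,6),(3,5),(4,4)\}$, using the concrete expressions for $a_{j_0}$ in $(u,v,p,q,m,n)$ combined with the relation $a_{j_0}(P_0)=0$ and the constraints imposed by Lemma~\ref{lmhyp}.
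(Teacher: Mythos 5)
Your proposal correctly settles the case where none of $a_1,a_5,a_6$ vanishes: the coalescing move $P_0\mapsto P_0+\eta\,P_0/(x-v)^2$ raises $a_9$ by $\eta$, keeps $a_{10}$, and perturbs all other coefficients harmlessly (the paper covers this case by the same Jacobian argument it uses for the vanishing-coefficient case, so your move is a legitimate, more elementary alternative there). The problem is the case you yourself call harder. When the unique vanishing coefficient $a_{j_0}$ is pushed the wrong way by the coalescing move, you arrive at the dichotomy: either $\delta a_9$ and $\delta a_{j_0}$ are linearly independent on the stratum $\{a_{10}=-1\}$ of hyperbolic polynomials with fixed multiplicity pattern (then a within-$H$ move wins), or $\delta a_{j_0}=k\,\delta a_9$; and in the second branch your combined move works only if $|k|>|q_{j_0}|$. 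You never exclude the configuration $|k|\le |q_{j_0}|$ (including $k=0$, where $a_{j_0}$ is controlled only at second order); you merely ``expect'' it to be dispatched by a case analysis over $j_0\in\{1,5,6\}$ and $(m,n)$ that you do not carry out. That non-degeneracy statement is not a routine verification to be deferred -- it is the entire content of the lemma, so the proposal has a genuine gap exactly at the crux.

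For comparison, the paper proves that the degenerate branch never occurs at all, for every multiplicity pattern $m+n+p+q=11$ at once. With $P=(x-u)^m(x-v)^n(x-w)^p(x-t)^q$ and $P_{\zeta_1,\ldots,\zeta_k}:=P/(x-\zeta_1)\cdots(x-\zeta_k)$, the column span of $J:=(\partial(a_{10},a_9,a_j)/\partial(u,v,w,t))^t$ is generated by the coefficient vectors of $P_u,P_v,P_w,P_t$, and identities like $P_{u,v}=(P_u-P_v)/(v-u)$ and $(x-t)P_{u,v,w,t}=P_{u,v,w}$ let one replace these generators by $x^3P_{u,v,w,t}$, $x^2P_{u,v,w,t}$, $xP_{u,v,w,t}$, $P_{u,v,w,t}$. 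Writing $P_{u,v,w,t}=x^7+Ax^6+Bx^5+Cx^4+\cdots+G$, the resulting $3\times 4$ matrix (rows $x^{10}$, $x^9$, $x^j$) has rank $3$ provided $(A,B)\neq(0,0)$ for $j=6$, $(B,C)\neq(0,0)$ for $j=5$, and $(F,G)\neq(0,0)$ for $j=1$; these hold by Lemma~\ref{lmhyp}, since $P_{u,v,w,t}$ is hyperbolic with no root at $0$ (consecutive coefficients cannot both vanish, and $G=P_{u,v,w,t}(0)\neq 0$). Hence $da_{10}$, $da_9$, $da_{j_0}$ are linearly independent functionals of the roots, so your constant $k$ simply does not exist, and one can raise $a_9$ keeping $a_{10}=-1$ and $a_{j_0}$ fixed. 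This is precisely what your proposal would have to supply. (Note also that both your argument and the paper's tacitly use that the perturbed polynomials, having the correct root structure and coefficient signs near a point of $\bar{\Gamma}$, remain in $\bar{\Gamma}$; that shared step is not where your proposal fails.)
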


Therefore an $a_9$-maximal polynomial in $H$ (if any) must have at least five  
distinct real roots.

\begin{lm}\label{lmH5}
The set $H$ contains no $a_9$-maximal 
polynomial having exactly five distinct real roots.
\end{lm}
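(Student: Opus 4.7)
The plan is to suppose, for contradiction, that $P_0 \in H$ is $a_9$-maximal with exactly five distinct real roots, and to exhibit a perturbation within $\bar{\Gamma}$ that strictly increases $a_9$. By Lemma~\ref{lmHtriple}, $P_0$ has the form
\[
P_0 = (x-g)^2(x-h)\prod_{i=1}^{3}(x+b_i)^{m_i},
\]
where $g,h$ are distinct positive reals, $b_1,b_2,b_3$ are distinct positive reals, and $(m_1,m_2,m_3)$ is a partition of $8$ into three positive parts, i.e., one of $(6,1,1),(5,2,1),(4,3,1),(4,2,2),(3,3,2)$ up to reordering.

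The primary tool is a Lagrange multiplier analysis on the five-parameter family of polynomials of the above form. Writing $a_9=e_2(\text{roots})$ and using $a_{10}=-1$, i.e., $e_1(\text{roots})=2g+h-\sum_i m_i b_i=1$, a direct computation gives
\[
\partial_g a_9=2(1-g),\quad \partial_h a_9=1-h,\quad \partial_{b_i} a_9=-m_i(1+b_i),
\]
with constraint gradient $(2,1,-m_1,-m_2,-m_3)$. The Lagrange equation $\nabla a_9=\lambda\nabla(\text{constraint})$ yields $1-g=1-h=1+b_i=\lambda$, forcing $g=h$ and $b_1=b_2=b_3$, which contradicts distinctness. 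In the generic case where none of $a_1,a_5,a_6$ vanishes at $P_0$, the stratum is thus locally 4-dimensional at $P_0$ within $\bar{\Gamma}$, has no interior critical point of $a_9$, and hence some tangent direction at $P_0$ strictly increases $a_9$; a small motion in this direction preserves every nonvanishing coefficient sign and contradicts $a_9$-maximality.

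For the cases allowed by Remark~\ref{remvanish} in which one or two of $a_1,a_5,a_6$ vanish at $P_0$, I would augment the Lagrange system with a multiplier for each additional constraint (writing $a_j=\pm e_{11-j}$ in the roots) and combine the analysis with the complex-coalescence perturbation
\[
P_0\mapsto \bigl((x-g)^2+\varepsilon^2\bigr)\,R(x),\qquad R(x)=(x-h)\prod_i(x+b_i)^{m_i},
\]
which lies in $U\subset\bar{\Gamma}$, has $a_9$ strictly greater by $\varepsilon^2$ (since $R$ is monic of degree $9$), and shifts each other $a_j$ by $\varepsilon^2\cdot[x^j]R=\varepsilon^2(c_{j-1}-h c_j)$, where $\prod_i(x+b_i)^{m_i}=\sum_k c_k x^k$ has all $c_k>0$. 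A first-order motion of the root positions $(g,h,b_i)$ with vanishing $[x^9]$-component can then be chosen to cancel, to order $\varepsilon^2$, any forced change in the vanishing $a_j$, giving an honest direction in $\bar{\Gamma}$ along which $a_9$ strictly increases.

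The main obstacle I anticipate is the doubly-constrained case, $a_1=a_5=0$ or $a_1=a_6=0$, where the relevant stratum drops to dimension two and one must simultaneously control two vanishing conditions while preserving every other sign in $\sigma^0$. Here a careful sign analysis of $c_{j-1}-h c_j$ --- partition by partition across the five multiplicity patterns listed above --- is where the bulk of the technical work will reside, since the two-dimensional tangent cone leaves only a thin pencil of admissible first-order adjustments to balance the second-order coalescence shifts.
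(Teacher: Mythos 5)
Your Lagrange computation correctly handles the case where none of $a_1$, $a_5$, $a_6$ vanishes at $P_0$, and in that case it coincides in substance with the paper's own method (full rank of the Jacobian of the coefficient map with respect to the roots, hence freedom to push $a_9$ upward while preserving all strict sign conditions). The genuine gap is that the entire difficulty of the lemma sits in the degenerate situation you defer: by Remark~\ref{remvanish}, $P_0$ may satisfy $a_1=0$ together with $a_5=0$ or $a_6=0$, and for that situation your proposal is a plan, not a proof. Your compensation scheme --- choose a first-order root motion, tangent to $\{a_{10}=-1,\ a_9=\mathrm{const}\}$, cancelling the coalescence-induced shifts $\varepsilon^2(c_{j-1}-hc_j)$ in the two vanishing coefficients --- is available for arbitrary prescribed shifts precisely when the $4\times 5$ Jacobian $(\partial(a_{10},a_9,a_j,a_1)/\partial(g,h,b_1,b_2,b_3))$ has rank $4$; this is the same rank condition your augmented Lagrange system would require, and you give no argument excluding a rank drop. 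If the rank does drop, $P_0$ can a priori be a genuine constrained critical point (indeed a maximum) of $a_9$ on the two-dimensional stratum, and neither the multiplier formalism nor the perturbation $\bigl((x-g)^2+\varepsilon^2\bigr)R(x)$ then produces any contradiction.

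The paper's proof is devoted almost entirely to exactly this possibility. Writing $P_{u,v,w,t,h}=x^6+ax^5+bx^4+cx^3+dx^2+fx+g$ for $P$ divided by its five distinct linear factors, the paper shows that the rank can drop only if $b=0$ (case $a_6=0$) or $c=0$ (case $a_5=0$), because $g\neq 0$ ($P$ has no zero root). Then, separately for each multiplicity partition $(6,1,1)$, $(5,2,1)$, $(4,3,1)$, $(4,2,2)$, $(3,3,2)$, it solves $b=0$ (resp.\ $c=0$) for $t$, solves $a_1=0$ for $h$, substitutes, and verifies that this forces $a_{10}>0$, contradicting the sign pattern $\sigma^0$. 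These ten explicit computations (five partitions, done for both $j=6$ and $j=5$) are the substance of the proof; you correctly predict that this is ``where the bulk of the technical work will reside,'' but predicting the work is not performing it, and without it there is no proof that the doubly-degenerate case cannot harbour an $a_9$-maximal polynomial. As written, your proposal establishes the lemma only in the nondegenerate case.
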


\begin{lm}\label{lmH6}
The set $H$ contains no $a_9$-maximal 
polynomial having at least six distinct real roots.
\end{lm}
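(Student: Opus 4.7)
Let $P \in H$ be an $a_9$-maximal polynomial with at least $6$ distinct real roots. By Lemma~\ref{lmHtriple} and the structure of $H$, such a $P$ has a double positive root $g$, a simple positive root $w$, and $k \geq 4$ distinct negative roots $-b_1, \ldots, -b_k$ with multiplicities $m_i \geq 1$ summing to $8$. By Remark~\ref{remvanish}, the coefficients of $P$ that may vanish form a set $Z \subseteq \{1, 5, 6\}$, so $|Z| \leq 3$.

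The strategy is to produce a tangent direction along which $P$ can be perturbed inside $\bar{\Gamma}$ with strictly increasing $a_9$, contradicting maximality. The building blocks are \emph{root-swap} perturbations: for each pair of distinct indices $i \neq j$, substitute $b_i \mapsto b_i + \varepsilon / m_i$ and $b_j \mapsto b_j - \varepsilon / m_j$. This preserves the sum of all roots of $P$ exactly, hence $a_{10} = -1$ is unchanged, and a direct calculation yields
\[
\delta P \;=\; \varepsilon \Bigl(\tfrac{P(x)}{x+b_i} - \tfrac{P(x)}{x+b_j}\Bigr) \;=\; \varepsilon (b_j - b_i)\, Q_{ij}(x), \qquad Q_{ij} \;:=\; \tfrac{P(x)}{(x+b_i)(x+b_j)},
\]
where $Q_{ij}$ is monic of degree $9$. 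Hence $\delta a_9 = \varepsilon(b_j - b_i) \neq 0$ and $\delta a_\ell = \varepsilon(b_j - b_i)[Q_{ij}]_\ell$ for every remaining coefficient. With $k \geq 4$, there are at least $\binom{4}{2} = 6$ such perturbations available.

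The contradiction thus reduces to a rank claim: \emph{the matrix whose rows are $\bigl([Q_{ij}]_\ell\bigr)_{\ell \in Z \cup \{9\}}$, indexed by pairs $(i,j)$, has rank $|Z| + 1$.} Granted this, a suitable linear combination of swaps fixes $a_\ell$ for every $\ell \in Z \cup \{10\}$ while strictly increasing $a_9$; the resulting one-parameter family stays hyperbolic and preserves the (open) sign conditions at the nonvanishing coefficients, hence lies in $\bar{\Gamma}$, contradicting $a_9$-maximality of $P$. Verifying the rank claim is the main obstacle. I would carry it out by expressing each $[Q_{ij}]_\ell$ through elementary symmetric functions of the roots of $P$ other than $-b_i, -b_j$, which exhibits those entries as specific polynomials in $b_i + b_j$ and $b_i b_j$ with coefficients controlled by the remaining roots, and then invoking a Vandermonde-type determinantal argument on the $\binom{k}{2} \geq 6$ pairs to conclude that at most $3$ linear conditions on the swap parameters can be active simultaneously. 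Should a degenerate configuration threaten to collapse the rank, I would enlarge the perturbation family by admissible tangent directions within $H$---shifts of $g$ or $w$, or splittings of a negative root of multiplicity $\geq 2$ into two simple negative roots (which remains inside $H$ since only the total multiplicity of negative roots is prescribed)---each easily shown linearly independent from the swap directions, and thus restoring the rank needed to defeat $a_9$-maximality.
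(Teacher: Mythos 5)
Your overall strategy---perturb the roots and reduce $a_9$-maximality to a rank statement about a Jacobian---is the same one the paper uses, but your proof stops exactly where the real work begins, and your plan for finishing it would not go through. The central rank claim is asserted, not proved, and it is the entire content of the lemma. Worse, the proposed ``Vandermonde-type determinantal argument on the $\binom{k}{2}\geq 6$ pairs'' rests on a miscount of the available freedom: the swap directions are highly dependent. Writing $V:=P/\prod_{l=1}^{k}(x+b_l)$, each $Q_{ij}=V\cdot\prod_{l\neq i,j}(x+b_l)$ lies in $V\cdot\{\text{polynomials of degree}\leq k-2\}$, a space of dimension only $k-1$. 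So for $k=4$ (which occurs, e.g.\ negative multiplicities $(5,1,1,1)$ plus the two positive roots gives exactly six distinct roots) your six swaps span a $3$-dimensional space, and no abundance of pairs can raise the rank beyond $3$. In the hardest case $|Z|=2$ (say $Z=\{1,6\}$), your claim then reduces to the nonvanishing of a single $3\times 3$ determinant: with $V=x^7+v_6x^6+\cdots+v_0$ it is $v_1v_5-v_0v_6\neq 0$, for which you give no argument and which cannot be obtained from genericity, since $P$ is a putative extremal polynomial, not one you are free to choose. Note also that your bound $|Z|\leq 3$ makes the claim outright false for $k=4$ (rank $\leq k-1=3<|Z|+1$); you need the sharper conclusion of Remark~\ref{remvanish}, that at most two coefficients vanish and then they are $a_1$ and one of $a_5,a_6$.

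The paper closes precisely this hole by doing properly what you only gesture at in your fallback: it moves six distinct roots independently (positive roots included, so $a_{10}$ is carried as an explicit constraint in the Jacobian $J:=(\partial(a_{10},a_9,a_j,a_1)/\partial(u,v,w,t,h,q))^t$), observes via divided differences that the span of the directions $P/(x-\zeta)$ equals the span of $x^sP_{u,v,w,t,h,q}$, $0\leq s\leq 5$, and notes that in this basis the $4\times 6$ coefficient matrix is block-triangular, its nontrivial $2\times 4$ block having rank $2$ because $P_{u,v,w,t,h,q}$ is hyperbolic with no root at $0$, so its constant term is nonzero and no two consecutive coefficients vanish (Lemma~\ref{lmhyp}). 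That structural lemma is what replaces your unproved determinant condition. Your fallback remarks do not repair the gap: splitting a multiple negative root is not a tangent direction (it is a square-root-type deformation and cannot enter a Jacobian rank count); shifting $g$ or $w$ destroys the automatic preservation of $a_{10}$, so the matrix must acquire an $a_{10}$ column and the required rank rises to $|Z|+2$; and ``linear independence from the swap directions'' in polynomial space is not the relevant criterion---what must be proved is full rank of the projection onto the constrained coefficients, which is exactly the claim left open. Finally, a linear combination of swaps fixes the coefficients in $Z$ only to first order; to keep them exactly zero along a path you need the implicit function theorem, which again requires the very rank statement in question.
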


Hence the set $H$ contains no $a_9$-maximal polynomial at all. It results from 
Lemma~\ref{lm24} that there is no such polynomial in $\bar{\Gamma}$. 
Hence~$\bar{\Gamma}=\emptyset$.

\section{Proofs of Lemmas~\protect\ref{lmhyp}, 
\protect\ref{lm24}, 
\protect\ref{lmH2}, \protect\ref{lmHtriple}, 
\protect\ref{lmH3} and \protect\ref{lmH3bis}
\protect\label{prlm}}

\begin{proof}[Proof of Lemma~\ref{lmhyp}:]
Prove part (1). Suppose that a hyperbolic polynomial $V$ 
with two or more vanishing coefficients 
exists. If $V$ is degree $d$ hyperbolic, then $V^{(k)}$ is also hyperbolic 
for $1\leq k<d$. Therefore we can assume that $V$ is of the form 
$x^{\ell}L+c$, where $\deg L=d-\ell$, $\ell \geq 3$, 
$L(0)\neq 0$ and $c=V(0)\neq 0$. 
If $V$ is hyperbolic and 
$V(0)\neq 0$, 
then such is also $W:=x^dV(1/x)=cx^d+x^{d-\ell}L(1/x)$ 
and also $W^{(d-\ell )}$ which 
is of the form $ax^{\ell}+b$, $a\neq 0\neq b$. However given that $\ell \geq 3$ 
this polynomial is not hyperbolic. 

For the proof of part (2) we use exactly the same reasoning, but with 
$\ell =2$. The polynomial $ax^2+b$, $a\neq 0\neq b$, is hyperbolic if and 
only if $ab<0$.

To prove part (3) we consider the sequence of coefficients of  
$V:=\sum _{j=0}^dv_jx^j$, $v_0\neq 0\neq v_d$. 
Set $\Phi :=\sharp \{ k|v_k\neq 0\neq v_{k-1},v_kv_{k-1}<0\}$, 
$\Psi :=\sharp \{ k|v_k\neq 0\neq v_{k-1},v_kv_{k-1}>0\}$ and 
$\Lambda :=\sharp \{ k|v_k=0\}$. Then $\Phi +\Psi +2\Lambda =d$. By Descartes' 
rule of signs the number of 
positive (of negative) roots of $V$ is $pos_V\leq \Phi +\Lambda$ 
(resp. $neg_V\leq \Psi +\Lambda$). As $pos_V+neg_V=d$, one must have 
$pos_V=\Phi +\Lambda$ and $neg_V=\Psi +\Lambda$. There remains to notice that 
$\Phi +\Lambda$ is the number of sign changes in the sequence of coefficients 
of $V$ (and $\Psi +\Lambda$ of $V(-x)$), see part (2) of the lemma.

\end{proof}

\begin{proof}[Proof of Lemma~\ref{lm24}:] Prove part (1). 
A polynomial of $U_{2,2}$ or 
$U_4$ respectively is 
representable in the form:

$$P^{\dagger}:=(x+u)^2(x+v)^2S\Delta ~~~\, \, {\rm and}~~~\, \, 
P^*:=(x+u)^4S\Delta ~,$$
where $\Delta :=(x^2-\xi x+\eta )(x-w)$ and $S:=x^4+Ax^3+Bx^2+Cx+D$. 
All coefficients $u$, $u$, $v$, $w$, $\xi$, $\eta$, $A$, $B$, $C$, $D$  
are positive and $\xi ^2-4\eta <0$ (see Lemma~\ref{lmnotexist}); for 
$A$, $B$, $C$ and $D$ this follows from all roots of $P^{\dagger}/\Delta$ 
and $P^*/\Delta$ being negative. (The roots of $x^4+Ax^3+Bx^2+Cx+D$ 
are not necessarily different from $-u$ and $-v$.) We consider 
the two Jacobian matrices 

$$J_1:=(\partial (a_{10},a_9,a_1,a_5)/\partial (\xi ,\eta ,w,u))~~~{\rm and}~~~
J_2:=(\partial (a_{10},a_9,a_1,a_6)/\partial (\xi ,\eta ,w,u))~.$$
In the case of $P^{\dagger}$ their determinants equal 

$$\begin{array}{ccll}
\det J_1&=&\Pi (CDv+2CDu+C^2uv+2BDv^2+4BDuv&\\ &&
+2BDu^2+2BCuv^2+BCu^2v
+ADv^3+2ADuv^2&\\ &&+3ADu^2v+Cu^2v^3+ACuv^3+2ACu^2v^2)&,\\ 
\det J_2&=&\Pi (BDv+2BDu+Dv^3+2Duv^2+3Du^2v+BCuv+2ADv^2&\\ &&
+4ADuv+2ADu^2+Cuv^3+2u^2v^2C+2ACuv^2+ACu^2v)&,\\ 
\Pi &:=&-2v(w+u)(-\eta -w^2+w\xi )(\xi u+\eta +u^2)&.\end{array}$$
These determinants are nonzero. Indeed, each of the factors is either a sum of 
positive terms or equals 
$-\eta -w^2+w\xi <-\xi ^2/4-w^2+w\xi =-(\xi /2-w)^2\leq 0$. Thus one can choose 
values of $(\xi ,\eta ,w,v)$ close to the initial one ($u$, $A$, $B$, $C$ and 
$D$ remain fixed) to obtain any values of $(a_{10},a_9,a_1,a_5)$ or 
$(a_{10},a_9,a_1,a_6)$ close to the initial one. In particular, with $a_{10}=-1$, 
$a_1=a_5=0$ or $a_{10}=-1$, $a_1=a_6=0$ while $a_9$ can have values larger than 
the initial one. Hence this is not an $a_9$-maximal polynomial. (The values 
of the coefficients $a_j$, $j=0$, $2$, $3$, $4$, $6$ or $5$, $7$ and $8$ 
can change, but their signs remain the same if 
the change of the value of $(\xi ,\eta ,w,v)$ is small enough.) The same 
reasoning is valid for $P^*$ as well in which case one has 
 
$$\begin{array}{ccll}
\det J_1&=&M(3CD+C^2u+8BDu+3BCu^2+6ADu^2+u^4C+3ACu^3)&,\\ 
\det J_2&=&M(3BD+6u^2D+BCu+8ADu+3u^3C+3ACu^2)&,\\ 
M&:=&-4u^2(w+u)(-\eta -w^2+w\xi )(\xi u+\eta +u^2)&.\end{array}$$

To prove part (2) we observe that if the triple root of $P\in U_3$ 
is at $-u<0$, then in case when $P$ is increasing (resp. decreasing) 
in a neighbourhood of $-u$ 
the polynomial $P-\varepsilon x^2(x+u)$ (resp. $P+\varepsilon x^2(x+u)$), 
where $\varepsilon >0$ 
is small enough, has three simple roots close to $-u$; it belongs to 
$\bar{\Gamma}$, its coefficients $a_j$, $2\neq j\neq 3$, are the same as 
the ones of $P$, the signs of $a_2$ and $a_3$ are also the same. 

For the proof of part (3) we observe first that 1) for $x<0$ 
the polynomial $P$ has four maxima and four minima and 
2) for $x>0$ one of the following 
three things holds true: one has $P'>0$, or there is a double 
positive root $\gamma$ of 
$P'$, or $P'$ has two positive roots $\gamma _1<\gamma _2$ 
(they are both either smaller 
or greater than the positive root of $P$). Suppose first that $P\in U_0$. 
Consider the family of polynomials $P-t$, $t\geq 0$. Denote by $t_0$ 
the smallest value of $t$ for which one of the three things happens: 
$P-t$ has a double negative root $v$ (hence a local maximum), 
$P-t$ has a triple 
positive root $\gamma$ or $P-t$ has a double and a simple positive roots 
(the double one is at $\gamma _1$ or $\gamma _2$). In the second and third case 
one has $P-t_0\in H$. In the first case, if $P-t_0$ has another double 
negative root, then $P-t_0\in U_{2,2}$ and we are done. 
If not, then consider the family of 
polynomials 

$$P_s:=P-t_0-s(x^2-v^2)^2(x^2+v^2)^2=P-t_0-s(x^8-2v^4x^4+v^8)~~~,~~~s\geq 0~.$$
The polynomial $-(x^8-2v^4x^4+v^8)$ has double real roots at $\pm v$ 
and a double complex conjugate pair. It has the same signs of the 
coefficients of 
$x^8$, $x^4$ and $1$ as $P-t_0$ and $P$. The rest of the coefficients of 
$P-t_0$ and $P_s$ are the same. As $s$ increases, the value of $P_s$ 
for every $x\neq \pm v$ decreases, so for some $s=s_0>0$ for the first time 
one has either $P_s\in U_{2,2}$ (another local maximum of $P_s$ 
becomes a double negative root) or $P_s\in H$ ($P_s$ has positive roots 
of total multiplicity $3$, but not three simple ones). This proves part (3) for 
$P\in U_0$.

If $P\in U_2$ and the double negative root is a local minimum, then 
the proof of part (3) is just the same. If this is a local maximum, 
then one skips the construction of the family $P-t$ and starts constructing 
directly the family $P_s$. 
\end{proof}

\begin{proof}[Proof of Lemma~\ref{lmH2}:]
Suppose that such a polynomial exists. Then it must be of the form 
$P:=(x+u)^8(x-w)^3$, $u>0$, $w>0$. The conditions $a_{10}=-1$ and  
$a_1>0$ read:

$$8u-3w=-1~~~\, \, {\rm and}~~~\, \, u^7w^2(3u-8w)>0~.$$
In the plane of the variables $(u,w)$ the domain $\{ u>0, w>0, 3u-8w>0\}$ 
does not intersect the line $8u-3w=-1$ which proves the lemma.
\end{proof}

\begin{proof}[Proof of Lemma~\ref{lmHtriple}:]

Represent the polynomial in the form $P=(x+u_1)\cdots (x+u_8)(x-\xi )^3$, where 
$u_j>0$ and $\xi >0$. The numbers $u_j$ are not necessarily distinct. 
The coefficient $a_{10}$ then equals $u_1+\cdots +u_8-3\xi$. The condition 
$a_{10}=-1$ implies $\xi =\xi _*:=(u_1+\cdots +u_8+1)/3$. Denote by 
$\tilde{a}_1$ the coefficient $a_1$ 
expressed as a function of $(u_1,\ldots ,u_8,\xi )$. Using 
computer algebra (say, MAPLE) one finds $27\tilde{a}_1|_{\xi =\xi _*}$:

$$27\tilde{a}_1|_{\xi =\xi _*}=-(-u_1\cdots u_8+X+Y)(u_1+\cdots +u_8+1)^2~,$$
where $Y:=u_1\cdots u_8(1/u_1+\cdots +1/u_8)$ and 
$X:=u_1\cdots u_8\sum _{1\leq i,j\leq 8,i\neq j}u_i/u_j$ (the sum $X$ contains $56$ 
terms). We show that $a_1<0$ which contradiction proves the lemma. The factor 
$(u_1+\cdots +u_8+1)^2$ is positive. The factor $\Xi :=-u_1\cdots u_8+X+Y$ 
contains a single monomial with a negative coefficient, namely, 
$-u_1\cdots u_8$. Consider the sum 

$$\begin{array}{clcc}
&-u_1\cdots u_8+u_1^2u_3u_4u_5u_6u_7u_8+u_2^2u_3u_4u_5u_6u_7u_8&&\\ 
=&u_3u_4u_5u_6u_7u_8((u_1-u_2)^2+u_1u_2)&>&0\end{array}$$
(the second and third monomials are in $X$). Hence $\Xi$ is representable 
as a sum of positive quantities, so $\Xi >0$ and $a_1<0$. 
\end{proof}

\begin{proof}[Proof of Lemma~\ref{lmH3}:]
Suppose that such a polynomial exists. Then it must be of the  
form $(x+u)^8(x-w)^2(x-\xi )$, 
where $u>0$, $w>0$, $\xi >0$, $w\neq \xi$. 
One checks numerically (say, using MAPLE), 
for each of the two systems of algebraic 
equations 
$a_{10}=-1$, $a_1=0$, $a_5=0$ and $a_{10}=-1$, $a_1=0$, $a_6=0$, that each 
real solution $(u, w, \xi )$ or $(u,v,w)$ contains a nonpositive component. 
\end{proof}

\begin{proof}[Proof of Lemma~\ref{lmH3bis}:]
Making use of Lemma~\ref{lmHtriple} we consider only polynomials of the form 
$(x+u)^8(x-w)^2(x-\xi )$. 
Consider the Jacobian matrix 
$J_1^*:=(\partial (a_{10},a_9,a_1)/\partial (u,w,\xi ))$. Its determinant equals 
$6u^6(u+w)(u-7w)(\xi -w)(k+u)$. All factors except $u-7w$ are nonzero. 
Thus for $u\neq 7w$ one has $\det J_1\neq 0$, so one 
can fix the values of $a_{10}$ and $a_1$ 
and vary the one of $a_9$ arbitrarily close to the initial one by choosing 
suitable values of $u$, $w$ and $\xi$. Hence the polynomial is not 
$a_9$-maximal. For $u=7w$ one has $a_3=-117649w^7(35w+8\xi )<0$ which is 
impossible. Hence 
there exist no $a_9$-maximal polynomials 
which satisfy only the condition $a_1=0$ or none of the conditions $a_1=0$, 
$a_5=0$ or $a_6=0$. To see that there exist no such polynomials satisfying only 
the condition $a_5=0$ or $a_6=0$ one can consider the matrices 
$J_5^*:=(\partial (a_{10},a_9,a_5)/\partial (u,w,\xi ))$ and 
$J_6^*:=(\partial (a_{10},a_9,a_6)/\partial (u,w,\xi ))$. Their determinants 
equal respectively 

$$112u^2(u+w)(5u-3w)(\xi -w)(\xi +u)~~~\, {\rm and}~~~\, 
112u(u+w)(3u-w)(\xi -w)(\xi +u)$$
and are nonzero respectively for $5u\neq 3w$ and $3u\neq w$, in which cases 
in the same way we conclude that the polynomial is not $w_9$-maximal. 
If $u=3w/5$, then $a_1=-(2187/390625)w^9(-3w+34\xi )$ and $a_{10}=-\xi +14w/5$. 
As $a_1>0$ and $a_{10}<0$, one has $w>34\xi /3$ and $\xi >14w/5>(34/3)(14/5)\xi$ 
which is a contradiction. If $w=3u$, then $a_6=14u^4(10u+\xi )>0$ which is 
again a contradiction.

%
\end{proof}

\section{Proof of Lemma~\protect\ref{lmH4bis}
\protect\label{prlmbis}}

The multiplicities of the negative roots of $P$ define the following a priori 
possible cases: 

$${\rm A)}~~~(7,1)~,~~~{\rm B)}~~~(6,2)~,~~~{\rm C)}~~~(5,3)~~~
{\rm and~D)}~~~(4,4)~.$$ 
In all 
of them the proof is carried out simultaneously for the two possibilities 
$\{ a_1=a_5=0\}$ and $\{ a_1=a_6=0\}$. In order to simplify the proof we fix 
one of the roots to be equal to $-1$ (this can be achieved by a change 
$x\mapsto \beta x$, $\beta >0$, followed by $P\mapsto \beta ^{-11}P$). 
This allows to deal with one 
parameter less. By doing so we can no longer require that $a_{10}=-1$, 
but only that $a_{10}<0$.

\begin{proof}[Proof in Case A):]
We use the following parametrisation: $P=(x+1)^7(sx+1)(tx-1)^2(wx-1)$, $s>0$, 
$t>0$, $w>0$, $t\neq w$, i.e. 
the negative roots of $P$ are at $-1$ and $-1/s$ and the positive ones at 
$1/t$ and $1/w$. 

The condition $a_1=w+2t-s-7=0$ yields $s=w+2t-7$. For $s=w+2t-7$ one has 

$$\begin{array}{llllllll}
a_3&=&a_{32}w^2+a_{31}w+a_{30}&,&a_4&=&a_{42}w^2+a_{41}w+a_{40}&,\\ \\ 
&&{\rm where}&&a_{32}&=&-2t+7&,\\ 
a_{31}&=&-(2t-7)^2&,&a_{30}&=&-2t^3+28t^2-98t+112&\\ \\ 
&&{\rm and}&&a_{42}&=&t^2-14t+21&,\\ 
a_{41}&=&2t^3-35t^2+140t-147&,&a_{40}&=&
-14t^3+112t^2-294t+210&.\end{array}$$
The coefficient $a_{30}$ has a single real root $9.436\ldots$ hence $a_{30}<0$ 
for $t>9.436\ldots$. On the other hand 

$$a_{32}w^2+a_{31}w=w(-2t+7)(w+2t-7)=w(-2t+7)s$$ 
which is $<0$ for $t>9.436\ldots$. Thus the inequality $a_3>0$ fails for 
$t>9.436\ldots$. Observing that $a_{41}=(2t-7)a_{42}$ one can write 

$$a_4=(w+2t-7)wa_{42}+a_{40}=swa_{42}+a_{40}~.$$
The real roots of $a_{42}$ (resp. $a_{40}$) equal 
$1.708\ldots$ and $12.291\ldots$ (resp. $1.136\ldots$). Hence 
for $t\in [1.708\ldots ,12.291\ldots ]$ the inequality $a_4>0$ 
fails. There remains to consider the possibility 
$t\in (0,1.708\ldots )$.

It is to be checked directly that for $s=w+2t-7$ one has 

$$a_{10}/t=(7t-2)w(w+2t-7)+t(7-2t)=(7t-2)ws+t(7-2t)$$
which is $\geq 0$ (hence $a_{10}<0$ fails) for $t\in [2/7,7/2]$. 
Similarly 

$$\begin{array}{ll}
a_6=a_6^*w(w+2t-7)+a_6^{\dagger}=a_6^*ws+a_6^{\dagger}~~,~~&{\rm where}\\ 
a_6^*=21t^2-70t+35~~,~~&a_6^{\dagger}=
-70t^3+350t^2-490t+140~.\end{array}$$
The real roots of $a_6^*$ (resp. $a_6^{\dagger}$) equal 
$0.612\ldots >2/7=0.285\ldots$ and $2.720\ldots$ (resp. 
$0.381\ldots >2/7$, $2$ and 
$2.618\ldots$) hence for $t\in (0,2/7)$ one has 
$a_6^*>0$ and $a_6^{\dagger}>0$, i.e. $a_6>0$ and the equality $a_6=0$  
or the inequality $a_6<0$ is impossible.
\end{proof}

\begin{proof}[Proof in Case B):]
We parametrise $P$ as follows: $P=(x+1)^6(Tx^2+Sx-1)^2(wx-1)$, $T>0$, $w>0$.  
In this case we presume $S$ to be real, not necessarily positive. The factor 
$(Tx^2+Sx-1)^2$ contains the double positive and negative roots of $P$. 

From $a_1=w+2S-6=0$ one finds $S=(6-w)/2$. For $S=(6-w)/2$ one has 

$$\begin{array}{rclcl}
a_{10}/T&=&(6w-1)T+6w-w^2&,&\\ a_7&=&a_{72}T^2+a_{71}T+a_{70}&,&{\rm where}\\ 
a_{72}&=&15w-20&,&\\ a_{71}&=&-20w^2+105w-78&{\rm and}&\\ 4a_{70}&=&
15w^3-162w^2+468w-192&.&\end{array}$$
Suppose first that $w>1/6$. The inequality $a_{10}<0$ is equivalent to 
$T<(w^2-6w)/(6w-1)$. As $T>0$, this implies $w>6$. 

For $T=(w^2-6w)/(6w-1)$ one obtains $a_7=3C/4(6w-1)^2$, where the numerator 
$C:=40w^5-444w^4+1345w^3-502w^2+300w-64$ has a single real root $0.253\ldots$. 
Hence for $t>6$ one has $C>0$ and $a_7|_{T=(w^2-6w)/(6w-1)}>0$. On the other hand 
$a_{70}=a_7|_{T=0}$ has roots $0.489\ldots$, $4.504\ldots$ and $5.805\ldots$, so 
for $w>6$ one has $a_7|_{T=0}>0$. For $w>6$ fixed, and for 
$T\in [0,(w^2-6w)/(6w-1)]$, the value of the derivative 

$$\partial a_7/\partial T=(30w-40)T-20w^2+105w-78$$
is maximal for $T=(w^2-6w)/(6w-1)$; this value equals 

$$-(90w^3-430w^2+333w-78)/(6w-1)$$
which is $<0$ because the only real root of the numerator is $3.882\ldots$. 
Thus $\partial a_7/\partial T<0$ and $a_7$ is minimal for $T=(w^2-6w)/(6w-1)$. 
Hence the inequality $a_7<0$ fails for $w>1/6$. For $w=1/6$ one has 
$a_{10}=35T/36>0$. 

So suppose that $w\in (0,1/6)$. In this case the condition $a_{10}<0$ implies 
$T>(w^2-6w)/(6w-1)$. For $T=(w^2-6w)/(6w-1)$ one gets 

$$a_4=3D/4(6w-1)^2~~~,~~~{\rm where}~~~D:=64w^5-300w^4+502w^3-1345w^2+444w-40$$
has a single real root $3.939\ldots$. Hence for $w\in (0,1/6)$ one has 
$D<0$ and $a_4|_{T=(w^2-6w)/(6w-1)}<0$. The derivative 
$\partial a_4/\partial T=-w^2-2T-6$ being negative one has $a_4<0$ for 
$w\in (0,1/6)$, i.e. the inequality $a_4>0$ fails.
\end{proof}

\begin{proof}[Proof in Case C):]
We use the following parametrisation: $P=(x+1)^5(xs+1)^3(xt-1)^2(xw-1)$. 

From $a_1=w+2t-5-3s=0$ one gets $s=(w+2t-5)/3$. For $s=(w+2t-5)/3$ one has 
$27a_{10}=tS(w+2t-5)^2$, where 
\begin{equation}\label{S}
S:=10wt^2-2t^2+5w^2t-21wt+5t-2w^2+10w~.
\end{equation}
The factor $S$ can be represented as a polynomial in $w$ or in $t$; for each of 
the cases we give its discriminant (and the latter's real roots) as well:

$$\begin{array}{lcl}
S=(5t-2)w^2+(10-21t+10t^2)w+5t-2t^2&,&\\ D_1=5(t-2)(2t-1)(10t^2-13t+10)&,& 
0.5~~~,~~~2\\ \\ 
S=(10w-2)t^2+(5w^2-21w+5)t-2w^2+10w&,&\\ D_2=5(w^2-5w+1)(5w^2-w+5)&,& 
0.208\ldots ~~~,~~~4.791\ldots \end{array}$$
Hence for $t\in [0.5,2]$ or for $w\in [0.208\ldots ,4.791\ldots ]$ one has 
respectively $D_1\leq 0$ and $D_2\leq 0$ hence $S\geq 0$ and the inequality 
$a_{10}<0$ fails. The partial derivative

$$\partial S/\partial t=5w^2-21w+20wt-4t+5=5w(w-4.2)+(20w-4)t+5$$
is positive for $t>2$ and $w>4.791\ldots$. Hence $S>0$ for $t>2$ and 
$w>4.791\ldots$. For $(t,w)\in (0,0.5)\times (0,0.208\ldots )$ one has 
$w+2t-5<0$, i.e. $s<0$. Thus Case C) is impossible outside the 
two semi-strips 

$$\Sigma _1:=\{ (t,w)\in (0,0.5)\times (4.791\ldots ,\infty )\} ~~~{\rm and}~~~
\Sigma _2:=\{ (t,w)\in (2,\infty )\times (0,0.208\ldots )\} ~.$$

\begin{lm}
The inequality $a_4>0$ fails on $\Sigma _2$.
\end{lm}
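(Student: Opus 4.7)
The plan is to make $a_4$ explicit as a polynomial in $t$ and $w$ after substituting $s=(w+2t-5)/3$, and then extract a decomposition whose sign can be read off on $\Sigma_2=(2,\infty)\times(0,0.208\ldots)$, in direct parallel with the $a_{10}$-analysis just above. A Maple expansion gives $a_4$ as a polynomial that is cubic in $w$ and of moderate degree in $t$; the first hope is a factorisation analogous to $27a_{10}=tS(w+2t-5)^2$, with a non-trivial factor of low bidegree whose discriminant in one variable has few real roots.

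First I would view $a_4$ as a polynomial in $w$, $a_4=A_3(t)w^3+A_2(t)w^2+A_1(t)w+A_0(t)$, and study each $A_i(t)$ on $t>2$. The constant term $A_0(t)=a_4|_{w=0}$ is the coefficient of $x^4$ in $-(x+1)^5(((2t-5)/3)x+1)^3(tx-1)^2$; it is a univariate polynomial in $t$ whose real roots can be listed, and one checks that $A_0(t)\leq 0$ for $t>2$. This handles the boundary $w=0$.

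Second, to cross the short interval $0<w<0.208\ldots$ while keeping $a_4\leq 0$, I would either show that $\partial a_4/\partial w$ has a definite sign throughout $\Sigma_2$, cast as positivity of a polynomial in $t$ and treated by the same real-root bookkeeping used for $S$, or else bound $|a_4-A_0(t)|$ by $|A_0(t)|/2$ for large $t$ and treat the residual compact $t$-range by direct interval analysis of $a_4(t,w)$.

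The main obstacle will be algebraic bulk: unlike $a_{10}$, the coefficient $a_4$ carries no forced double factor, so a clean product decomposition into sign-determinable pieces is not guaranteed. If direct factorisation fails, the fallback is the discriminant route: view $a_4$ as a polynomial in $w$ with coefficients in $\mathbb{R}[t]$, compute its $w$-discriminant, locate its real roots on $t>2$, and use them to partition $\Sigma_2$ into finitely many cells on each of which the sign of $a_4$ is constant, to be pinned down by evaluation at a single interior point. Either way the proof collapses to a finite list of sign checks of univariate polynomials, entirely in the spirit of the $a_{10}$-analysis that precedes the lemma.
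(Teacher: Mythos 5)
Your overall strategy---substitute $s=(w+2t-5)/3$, expand $a_4$ in powers of $w$, and reduce everything to sign bookkeeping of univariate polynomial coefficients---is the same as the paper's, but two of your specific claims need correction. First, $a_4$ is quartic, not cubic, in $w$: the paper's expansion is $27a_4=w^4+s_3w^3+s_2w^2+s_1w+s_0$ with $s_3=-10t+25$, $s_2=-30t^2+60t-120$, $s_1=-22t^3+75t^2-120t+175$ and $s_0=-20t^4+110t^3-300t^2+350t-410$. Second, and more seriously, your first alternative---showing that $\partial a_4/\partial w$ has a definite sign throughout $\Sigma_2$---fails: as $w\to 0^+$ that derivative tends to $s_1(t)/27$, and $s_1(2)=59>0$, so the derivative is positive near $t=2$ for small $w$, whereas for large $t$ it is negative because $s_1\to -\infty$ dominates the remaining terms on $0<w<0.208\ldots$; hence the sign is not constant on $\Sigma_2$ and that branch is a dead end. (Your opening hope for a factorisation analogous to $27a_{10}=tS(w+2t-5)^2$ also does not materialise, but, as you anticipated, none is needed.)

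Your second alternative (constant-term domination for large $t$ plus a compact-region check) would work, but it is heavier than necessary, and the paper's proof shows how to make the domination uniform with no residual computation: the key observation you miss is that every potentially positive contribution is monotone in $t$, hence maximal at $t=2$. Concretely, on $\Sigma_2$ one has $w^4+s_3w^3\le (0.208\ldots)^4+5\,(0.208\ldots)^3<0.05$ since $s_3\le s_3(2)=5$; $s_2<0$ identically (its discriminant is negative), so $s_2w^2<0$; $s_1$ is decreasing (its derivative has no real roots), so $s_1w\le 59\cdot 0.208\ldots<13$; and $s_0$ is decreasing for $t\ge 2$ (its derivative has the single real root $1$), so $s_0\le s_0(2)=-350$. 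Hence $27a_4<0.05+13-350<0$ on all of $\Sigma_2$, with no discriminant computation, cell decomposition, or interval analysis. Your proposal is salvageable along its second branch, but in its stated form it leaves the crucial uniform bound unproved and relies on a deferred computation that the paper's one-line monotonicity argument renders unnecessary.
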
 

\begin{proof}Indeed, 

$$\begin{array}{lcl}
27a_4=w^4+s_3w^3+s_2w^2+s_1w+s_0&,&{\rm where}\\ 
s_3=-10t+25&,&s_2=-30t^2+60t-120\\ 
s_1=-22t^3+75t^2-120t+175&&{\rm and}
\\ s_0=-20t^4+110t^3-300t^2+350t-410&&.\end{array}$$
For $(t,w)\in \Sigma _2$ one has 

$$w^4+s_3w^3\leq 0.208\ldots ^4+(-10\times 2+25)\times 0.208\ldots ^3<0.05~.$$
The trinomial $s_2$ is negative (because its discriminant is such), so 
$s_2w^2<0$. The quantity $s_0$ is decreasing for $t\geq 2$ (because the only 
real root of its derivative equals $1$), so in $\Sigma _2$ one has 
$s_0<s_0|_{t=2}=-350$. Finally, the quantity $s_1$ is decreasing 
(its derivative has no real roots) hence in $\Sigma _2$ the term 
$s_1w$ is less than $s_1|_{t=2}w\leq 59\times 0.208\ldots <13$. Thus 
$a_4<0.05-350+13<0$ in $\Sigma _2$.
\end{proof} 

We define the sets 

$$\begin{array}{llllllrl}
\Sigma _3&:=&\{ (t,w)&\in &[0,0.5]&\times &[6.75\ldots ,\infty )\} &,\\ 
\Sigma _4&:=&\{ (t,w)&\in &[0.25,0.5]&\times &[4.791\ldots ,6.75]\} &,\\ 
\Sigma _5&:=&\{ (t,w)&\in &[0,0.25]&\times &[5,6.75]\}&{\rm and}\\ 
\Sigma _6&:=&\{ (t,w)&\in &[0,0.25]&\times &[4.791\ldots ,5]\}&.\end{array}$$
One can observe that 
$\Sigma _1\subset (\Sigma _3\cup \Sigma _4\cup \Sigma _5\cup \Sigma _6)$. 
For $w=6.75$ one has 

$$27a_6=14t^5+511.75t^4-44.09375t^3-6341.949214t^2-4336.44531t+3760.50781$$
Its real roots are $-36.303\ldots$, $-3.058\ldots$, $-1.324\ldots$, 
$0.503\ldots$ and $3.629\ldots$. Hence for $t\in (0,0.5)$, $w=6.75$ one has 
$a_6>0$. One can represent $27\partial a_6/\partial w$ in the form 
$(4w-5+2t)g$, where

$$g:=4t^4+4t^3w+t^2w^2-35t^2-20wt^2+90t-10w^2t+20wt-5-40w+10w^2~.$$
Hence $g|_{w=6.75}=4t^4+27t^3-124.4375t^2-230.625t+180.625$, with real roots 
$-9.360\ldots$, $-1.982\ldots$, $0.610\ldots$ and $3.982\ldots$, so  
$g|_{w=6.75}$ is $>0$ for $t\in (0,0.5)$. 

\begin{lm}The derivative  
$\partial g/\partial w=(2t^2-20t+20)w+4t^3-20t^2+20t-40$
is positive on $\Sigma _3$.
\end{lm}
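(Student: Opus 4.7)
The plan is to exploit that $\partial g/\partial w$ is affine linear in $w$. Write it as $A(t)\,w+B(t)$ with
\[
A(t):=2t^{2}-20t+20,\qquad B(t):=4t^{3}-20t^{2}+20t-40.
\]
First I would show that $A(t)>0$ on $[0,0.5]$. Its roots are $5\pm\sqrt{15}\approx 1.127,\,8.873$, both outside $[0,0.5]$, and $A(0)=20>0$, so $A$ stays strictly positive on the whole interval; in fact $A(t)\geq A(0.5)=10.5$.

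Since $A(t)>0$ throughout $\Sigma_{3}$, the function $w\mapsto A(t)w+B(t)$ is strictly increasing in $w$ for each fixed $t\in[0,0.5]$. Hence its minimum over $\Sigma_{3}=[0,0.5]\times[6.75,\infty)$ is attained along the edge $w=6.75$, and it suffices to check that the one-variable polynomial
\[
h(t):=6.75\,A(t)+B(t)=4t^{3}-6.5\,t^{2}-115\,t+95
\]
is positive on $[0,0.5]$.

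Finally I would verify this one-variable inequality by monotonicity. Compute $h'(t)=12t^{2}-13t-115$. One has $h'(0)=-115<0$ and $h'(0.5)=3-6.5-115=-118.5<0$; as $h'$ is a quadratic opening upward, its real roots straddle $[0,0.5]$, so $h'<0$ throughout $[0,0.5]$. Thus $h$ is strictly decreasing on $[0,0.5]$ and attains its minimum at the right endpoint: $h(0.5)=0.5-1.625-57.5+95=36.375>0$. Combined with the previous step, this yields $\partial g/\partial w\geq A(t)\cdot 6.75+B(t)\geq 36.375>0$ on $\Sigma_{3}$.

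There is no real obstacle: the argument is a two-step reduction from a two-dimensional region to an interval, followed by endpoint and derivative checks on a cubic. The only thing that has to be handled carefully is ensuring $A(t)$ does not vanish on $[0,0.5]$, for otherwise the reduction to the edge $w=6.75$ would fail; locating the roots of $A$ makes this immediate.
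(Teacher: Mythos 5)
Your proof is correct: every computation checks out ($A$ has roots $5\pm\sqrt{15}$, so $A\geq A(0.5)=10.5>0$ on $[0,0.5]$; $h(t)=6.75A(t)+B(t)=4t^3-6.5t^2-115t+95$; $h'<0$ on $[0,0.5]$ since the upward parabola $h'$ is negative at both endpoints; $h(0.5)=36.375>0$), and the logic of reducing to the edge $w=6.75$ is sound precisely because you verified $A(t)>0$ there. The paper's own proof is in the same elementary spirit but is cruder and shorter: it bounds the two pieces separately, namely $(2t^2-20t+20)w>(-20t+20)w\geq 10\times 6.75=67.5$ (using $t\leq 0.5$, $w\geq 6.75$) and $4t^3-20t^2+20t-40\geq 4t^3-40\geq -40$ (using $20t(1-t)\geq 0$ on $[0,1]$), so the sum exceeds $27.5>0$, with no calculus and no edge reduction. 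Your version buys a sharper lower bound ($36.375$ versus $27.5$, and yours is in fact the exact infimum over $\Sigma_3$) at the cost of a derivative-sign argument for the cubic; the paper's buys brevity. Both rest on the same structural facts, that the expression is affine in $w$ with positive $w$-coefficient on the given $t$-range and that $w$ is bounded below, so neither approach has a gap.
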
 
Hence this is the case of 
$\partial a_6/\partial w$ and $a_6$ as well, so the inequality $a_6<0$ or 
the equality $a_6=0$ fails of $\Sigma _3$. 

\begin{proof}
On $\Sigma _3$ one has 

$$\begin{array}{cccccccc}
(2t^2-20t+20)w&>&(-20t+20)w&>&10\times 6.75&=&67.5&{\rm and}\\ 
4t^3-20t^2+20t-40&>&4t^3-40&>&-40&,&&\end{array}$$
so $\partial a_6/\partial w>0$.
\end{proof} 

\begin{lm}\label{lmK}
One has $a_{10}\geq 0$ on $\Sigma _4$.
\end{lm}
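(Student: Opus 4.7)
The plan is to exploit the factorisation $27a_{10}=tS(w+2t-5)^2$ already obtained in Case~C), so that on $\Sigma_4$, where $t\geq 1/4>0$, the sign of $a_{10}$ is governed purely by the sign of the expression $S$ from~\eqref{S}. The claim $a_{10}\geq 0$ on $\Sigma_4$ thus becomes equivalent to $S\geq 0$ on $\Sigma_4$, and this is what I would prove.

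First I would rewrite $S$ as a quadratic in $t$:
$$S=2(5w-1)t^2+(5w^2-21w+5)t+2w(5-w).$$
On $\Sigma_4$ both the leading coefficient and the linear coefficient are strictly positive: the first because $w\geq 4.791\ldots$ gives $5w-1>22$, and the second because the larger real root of $5w^2-21w+5$ equals $(21+\sqrt{341})/10\approx 3.947$, which lies strictly below the lower bound of $w$ on $\Sigma_4$. Consequently $t\mapsto S$ is strictly increasing on $[0,\infty)$ for every fixed $w\in[4.791\ldots,6.75]$.

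Next I would split $\Sigma_4$ along the line $w=5$. On the lower strip $w\in[4.791\ldots,5]$ the constant term $2w(5-w)$ is nonnegative, so all three coefficients of the above quadratic in $t$ are nonnegative with the $t^2$-coefficient strictly positive, which gives $S>0$ immediately. On the upper strip $w\in(5,6.75]$ the constant term is negative, so a coefficient-by-coefficient argument fails; instead the monotonicity in $t$ reduces the task to checking $S|_{t=1/4}\geq 0$. A direct substitution yields $S|_{t=1/4}=(-6w^2+43w+9)/8$, a concave quadratic in $w$ whose two real roots (approximately $-0.2$ and $7.37$) lie outside $[5,6.75]$, so this expression is strictly positive throughout the upper strip.

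I do not foresee any serious obstacle: the argument is a one-variable monotonicity statement followed by a finite numerical root comparison, entirely in the spirit of the preceding case analyses. The only point requiring care is confirming at the outset that the linear coefficient $5w^2-21w+5$ is positive on the whole $w$-range of $\Sigma_4$, since without this one loses the monotonicity of $S$ in $t$ on which the upper-strip argument relies; this is why the explicit identification of $(21+\sqrt{341})/10$ as the relevant root of that trinomial, and its comparison with $4.791\ldots$, is the technical heart of the proof.
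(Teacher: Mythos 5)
Your proposal is correct and follows essentially the same route as the paper: both establish that $S$ is increasing in $t$ on $\Sigma_4$ (using the positivity of $5w^2-21w+5$ for $w\geq 4.791\ldots$) and then verify positivity of the same boundary polynomial $S|_{t=1/4}=-0.75w^2+5.375w+1.125$ with the same roots $\approx -0.2$ and $\approx 7.37$. Your split at $w=5$, where coefficient signs alone settle the lower strip, is a minor cosmetic refinement of the paper's uniform argument.
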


\begin{proof}
One has $a_{10}=(t/27)(w+2t-5)^2S$, see (\ref{S}), 
hence  
$S|_{t=0.25}=-0.75w^2+5.375w+1.125$ which is positive for 
$w\in [4.791\ldots ,6.75]$. The lemma follows from   
$\partial S/\partial t=(20w-4)t+5w^2-21w+5$ being positive 
for $(t,w)\in \Sigma _4$. 
\end{proof}

\begin{lm}
One has $a_6>0$ in $\Sigma _5$.
\end{lm}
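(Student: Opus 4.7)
The plan is to mirror the argument used on $\Sigma_3$: establish that $a_6$ is strictly increasing in $w$ throughout $\Sigma_5$, then check that $a_6|_{w=5}>0$ for $t\in[0,0.25]$. From the identity already introduced,
\[27\,\partial a_6/\partial w = (4w-5+2t)\,g,\]
and on $\Sigma_5$ one has $4w-5+2t \geq 4\cdot 5-5 = 15>0$, so the sign of $\partial a_6/\partial w$ coincides with the sign of $g$.

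First I would verify $g>0$ on $\Sigma_5$ by a monotonicity argument in $w$. Using
\[\partial g/\partial w = (2t^2-20t+20)w + 4t^3-20t^2+20t-40,\]
for $t\in[0,0.25]$ the coefficient $2t^2-20t+20$ is $\geq 15$, so the first summand is $\geq 75$ on $\Sigma_5$, while on $[0,0.25]$ the remaining $t$-polynomial is bounded below by roughly $-41$. Hence $\partial g/\partial w > 0$ on $\Sigma_5$, so $g$ attains its minimum on the edge $w=5$. The restriction $g|_{w=5}$ is a quartic in $t$ whose derivative is visibly negative on $[0,0.25]$; evaluating at the two endpoints shows it is positive there, hence $g>0$ on $\Sigma_5$, and consequently $a_6$ is strictly increasing in $w$ on $\Sigma_5$.

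It then remains to show $a_6|_{w=5}>0$ for $t\in[0,0.25]$. Substituting $w=5$ (and hence $s=2t/3$) into the explicit expression for $27a_6$ obtained by expanding $P=(x+1)^5(sx+1)^3(xt-1)^2(xw-1)$ and extracting the coefficient of $x^6$ yields a degree-$5$ polynomial in $t$. I would compute its real roots numerically, just as was done for $27a_6|_{w=6.75}$, and verify that none of them lies in $[0,0.25]$ while the sign at, say, $t=0$ is positive. The combination of the two steps then gives $a_6>0$ throughout $\Sigma_5$. The main obstacle is clerical rather than conceptual: one must carry out the explicit expansion of $27a_6$ that the text has not written out in full, and then perform a careful numerical root enumeration, with particular attention to the corner $(t,w)=(0.25,5)$ where several of the estimates above are simultaneously close to their bounds.
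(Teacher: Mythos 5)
Your proposal is correct, but it takes a genuinely different route from the paper's. The paper exploits the fact that $a_6$ is a polynomial of degree $4$ in $w$, so its Taylor expansion at $w=5$ is finite, and it verifies numerically that all five coefficient polynomials $27\,\partial ^sa_6/\partial w^s|_{w=5}$, $s=0,\dots ,4$ (the $s=0$ one being $300t^4-400t^3-2025t^2+135$), are positive for $t\in [0,0.25]$; positivity of $a_6$ for all $w\geq 5$ then follows at once. You instead recycle the factorization $27\,\partial a_6/\partial w=(4w-5+2t)g$ and the formula for $\partial g/\partial w$, which the paper introduced only for the $\Sigma _3$ argument. Your steps do go through: $(2t^2-20t+20)w\geq 15\cdot 5=75$ while $4t^3-20t^2+20t-40\geq -40$ on $\Sigma _5$, so $\partial g/\partial w>0$; the restriction $g|_{w=5}=4t^4+20t^3-110t^2-60t+45$ has negative derivative on $[0,0.25]$ and is positive at both endpoints (about $23.45$ at $t=0.25$), so $g>0$ and $a_6$ is increasing in $w$ on $\Sigma _5$; and the boundary value $27a_6|_{w=5}$ equals the quartic above -- your ``degree-$5$'' prediction is off by one, since the $t^5$ terms cancel -- whose smallest positive root lies slightly above $0.25$ (near $0.253$, with value $\approx 3.36$ at $t=0.25$), so it is positive on $[0,0.25]$. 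Your route needs only two univariate positivity checks, one of which coincides with the paper's $s=0$ check, and yields the extra information that $a_6$ increases in $w$; the paper's route avoids the two-variable monotonicity argument for $g$ and consists of checks uniform in style with the rest of the section. Both arguments are numerically tight at the same corner $(t,w)=(0.25,5)$, which you correctly flag.
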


\begin{proof}
We use the following expression for $27a_6$:

$$\begin{array}{llll}
27a_6=h_4w^4+h_3w^3+h_2w^2+h_1w+h_0&,&h_4=t^2-10t+10&,\\ 
h_3=6t^3-35t^2+50t-70&,&h_2=12t^4-30t^3+90t+90&,\\ 
h_1=8t^5-20t^4-70t^3+355t^2-460t+25&,&&\\ 
h_0=-40t^5+100t^4-50t^3-50t^2+50t+260&.&&\end{array}$$
Hence the values for $w=5$ of the derivatives $27\partial ^sa_6/\partial w^s$ 
are the following polynomials:

$$\begin{array}{lll}
27\partial ^0a_6/\partial w^0&=&300t^4-400t^3-2025t^2+135\\ 
27\partial ^1a_6/\partial w^1&=&8t^5+100t^4+80t^3-1770t^2-810t+675\\ 
27\partial ^2a_6/\partial w^2&=&24t^4+120t^3-750t^2-1320t+1080\\ 
27\partial ^3a_6/\partial w^3&=&36t^3-90t^2-900t+780\\ 
27\partial ^4a_6/\partial w^4&=&24t^2-240t+240\end{array}$$
All of them are positive for $t\in [0,0.25]$ from which 
and from the Taylor series of $a_6$ w.r.t. the variable $w$ the lemma follows.
\end{proof} 

\begin{lm}
One has $a_{10}\geq 0$ on $\Sigma _6$.
\end{lm}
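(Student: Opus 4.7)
The plan is to reuse the factorization $27 a_{10} = t\, S\, (w+2t-5)^2$ established in the proof of Case C), where $S = 10wt^2 - 2t^2 + 5w^2t - 21wt + 5t - 2w^2 + 10w$. Rewriting $S$ as a polynomial in $t$ gives
$$S = (10w - 2)\, t^2 + (5w^2 - 21w + 5)\, t + (10w - 2w^2).$$
On $\Sigma_6$ one has $t \geq 0$ and $(w+2t-5)^2 \geq 0$, so $a_{10}$ and $S$ have the same sign; it therefore suffices to prove $S \geq 0$ throughout $\Sigma_6$.

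To establish this, I would show that all three coefficients of $S$, viewed as a polynomial in $t$, are nonnegative for $w \in [4.791\ldots, 5]$, so that $S$ is a sum of nonnegative terms. The leading coefficient $10w - 2$ is positive since $w > 1/5$. The constant term factors as $2w(5-w)$, which is nonnegative for $w \in [0,5]$ and in particular on the $w$-range of $\Sigma_6$. The middle coefficient $5w^2 - 21w + 5$ is a quadratic in $w$ with real roots $(21 \pm \sqrt{341})/10$; its larger root is less than $4$, so this coefficient is positive on $[4.791\ldots, 5]$. Combined with $t \geq 0$, these three signs force $S \geq 0$, and hence $a_{10} \geq 0$ on $\Sigma_6$.

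I do not expect any substantial obstacle: the proof reduces to three elementary sign checks on a specific interval, completely parallel in spirit to Lemma~\ref{lmK}. The only mildly delicate point is confirming that the lower endpoint $w = (5+\sqrt{21})/2 \approx 4.791$ of the $w$-range of $\Sigma_6$ lies to the right of the larger root of $5w^2 - 21w + 5$, which is the numerical comparison $(21+\sqrt{341})/10 \approx 3.947 < 4.791$. Everything else follows from writing $S$ in the form above and inspecting its coefficients on the specified range.
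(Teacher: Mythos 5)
Your proof is correct and takes essentially the same approach as the paper: the paper also starts from $27a_{10}=tS(w+2t-5)^2$ and expands $S$ as a Taylor polynomial in $t$ at $t=0$, whose coefficients $-2w^2+10w$, $5w^2-21w+5$ and $20w-4$ (up to the factorial factor) are exactly the coefficients of your quadratic in $t$, checked nonnegative on the same interval $w\in[4.791\ldots,5]$. The only difference is presentational -- direct regrouping versus Taylor expansion -- and your explicit verification that the larger root of $5w^2-21w+5$ lies below $4.791\ldots$ is a welcome detail the paper leaves implicit.
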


\begin{proof}
Recall that the quantity $S$ was defined by (\ref{S}).  
The values for $t=0$ of the derivatives 
$\partial ^sS/\partial t^s$ are:

$$\begin{array}{lll}  
\partial ^0S/\partial t^0&=&-2w^2+10w\\ 
\partial ^1S/\partial t^1&=&5w^2-21w+5\\ 
\partial ^2S/\partial t^2&=&20w-4~.\end{array}$$
They are all nonnegative for $w\in [4.791,5]$ from which 
and from the Taylor series of $S$ w.r.t. the variable $t$ one gets $S\geq 0$ in 
$\Sigma _6$ and the lemma follows.
\end{proof} 
\end{proof}

\begin{proof}[Proof in Case D):]
$P=(x+1)^4(sx+1)^4(tx-1)^2(wx-1)$.\\ 

The condition $a_1=w+2t-4s-4=0$ implies $s=(w+2t-4)/4$. For $s=(w+2t-4)/4$ one 
has 
$256a_{10}=t(w+2t-4)^3H^*$, where 
\begin{equation}\label{H}
H^*:=8wt^2-2t^2+4w^2t-5wt+4t+8w-2w^2~.
\end{equation}
\begin{lm}\label{lmH}
The inequality $H^*\geq 0$ (hence $a_{10}\geq 0$) holds in each of the two cases 
$t\in [1/2,2]$ and $w\in [1/4,4]$. It holds also for 
$(t,w)\in [2,\infty )\times [4,\infty )$, for 
$(t,w)\in (0,1/2]\times (0,1/4]$ and for $(t,w)\in [0.3,1/2]\times [4,6.71]$.
\end{lm}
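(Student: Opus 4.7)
The plan is to treat $H^*$ as a quadratic polynomial in one variable with the other as a parameter, exploiting the explicit coefficient factorizations
\[
H^* = 2(4w-1)t^2 + (4w^2-5w+4)t + 2w(4-w) = 2(2t-1)w^2 + (8t^2-5t+8)w - 2t(t-2).
\]
In every region a direct argument about the sign of the leading coefficient, the sign of the linear coefficient, and the boundary values will suffice.

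For the case $t\in[1/2,2]$ I would view $H^*$ as a quadratic in $w$. The leading coefficient $2(2t-1)$ is $\geq 0$; the linear coefficient $8t^2-5t+8$ has discriminant $25-256<0$ and is therefore positive for every real $t$, so the parabola in $w$ either is linear with positive slope (at $t=1/2$) or opens upward with vertex at a negative value of $w$. In both subcases $H^*$ is nondecreasing in $w$ on $[0,\infty)$, so it is enough to check $H^*(t,0)=-2t(t-2)\geq 0$, which holds for $t\in[0,2]$. The case $w\in[1/4,4]$ is completely symmetric: $2(4w-1)\geq 0$, the linear coefficient $4w^2-5w+4$ has discriminant $25-64<0$, and the reduction is to $H^*(0,w)=2w(4-w)\geq 0$ for $w\in[0,4]$.

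For the region $[2,\infty)\times[4,\infty)$ I would again view $H^*$ as an upward-opening quadratic in $t$. Since the product of its roots equals $w(4-w)/(4w-1)\leq 0$ for $w\geq 4$, one root is nonpositive and the other, call it $t_+$, is nonnegative; so $H^*(\cdot,w)\geq 0$ on $[t_+,\infty)$. A direct evaluation gives
\[
H^*(2,w)=8(4w-1)+2(4w^2-5w+4)-2w(w-4)=6w(w+5),
\]
which is strictly positive for $w\geq 4$. This forces $t_+\leq 2$ and hence $H^*(t,w)\geq 0$ on the region. For $(0,1/2]\times(0,1/4]$ the coefficient $4w-1<0$, so the quadratic in $t$ opens downward and its minimum on $[0,1/2]$ is attained at an endpoint; the two endpoint evaluations $H^*(0,w)=2w(4-w)$ and $H^*(1/2,w)=(15w+3)/2$ are both strictly positive on $(0,1/4]$.

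For the last region $[0.3,1/2]\times[4,6.71]$ the coefficient $4w-1>0$ and the linear coefficient $4w^2-5w+4>0$, so the vertex of the $t$-quadratic is at negative $t$ and $H^*$ is increasing in $t$ on $[0,\infty)$. The minimum over $[0.3,1/2]$ is therefore at $t=0.3$, giving
\[
H^*(0.3,w) = -0.8\,w^2 + 7.22\,w + 1.02,
\]
a downward parabola in $w$ whose positive root is approximately $9.16$, so it is strictly positive on $w\in[4,6.71]$. There is no conceptual obstacle in this proof; the only delicate step is region (v), whose upper bound $6.71$ appears to be pinned down precisely so that $H^*(0.3,\cdot)$ stays positive, and one must be careful to verify that the monotonicity argument pushes the minimum to $t=0.3$ rather than to the other endpoint.
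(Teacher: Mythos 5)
Your proof is correct and follows essentially the same route as the paper: both rest on the two quadratic decompositions $H^*=(8w-2)t^2+(4w^2-5w+4)t+2w(4-w)=(4t-2)w^2+(8t^2-5t+8)w+2t(2-t)$, the positivity of the two middle coefficients (negative discriminants), and the same key evaluations $H^*(2,w)=6w(w+5)$ and $H^*(0.3,w)=-0.8w^2+7.22w+1.02$. The only deviations are cosmetic repackagings of the same facts (root location instead of the paper's Taylor expansion at $t=2$, and concavity with endpoint checks instead of the paper's regrouping into positive terms on $(0,1/2]\times(0,1/4]$), and these are equally valid.
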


\begin{rem}
{\rm In other words, for $t>0$, $w>0$, the inequality $a_{10}<0$ fails 
outside the domain $\Omega _1\cup \Omega _2\cup \Omega _3$, where} 

$$\Omega _1:=(2,\infty )\times (0,1/4)~~~,~~~
\Omega _2:=(0,1/2)\times (6.71,\infty )~~~,~~~
\Omega _3:=(0,0.3)\times (4,6.71]~.$$
{\rm We set $\Omega _3=\Omega _3^-\cup \Omega _3^+$, where} 

$$\Omega _3^-:=(0,0.3)\times (4,5]~~~,~~~
\Omega _3 ^+:=(0,0.3)\times (5,6.71]~.$$
\end{rem}

\begin{proof}[Proof of Lemma~\ref{lmH}]
We represent $H^*$ in two ways: 

$$\begin{array}{lclc}
H^*=H_{2w}w^2+H_{1w}w+H_{0w}&,&H_{2w}=4t-2&,\\ 
H_{1w}=8t^2-5t+8&,&H_{0w}=-2t^2+4t&{\rm and}\\ \\ 
H^*=H_{2t}t^2+H_{1t}t+H_{0t}&,&H_{2t}=8w-2&,\\ 
H_{1t}=4w^2-5w+4&,&H_{0t}=-2w^2+8w&.\end{array}$$
The first statement of the lemma follows from 
$H_{jw}\geq 0$, $j=1$, $2$, $3$ for $t\in [1/2,2]$ and 
$H_{jt}\geq 0$, $j=1$, $2$, $3$ for $w\in [1/4,4]$. 
The quantity $H^*$ is a degree 
$2$ polynomial in $t$. For $t=2$ and $w\in [4,\infty)$ one has 

$$\begin{array}{lcll}
H^*=30w+6w^2>0&,&\partial ^2H^*/\partial t^2=16w-4\geq 0&{\rm and}\\ 
\partial H^*/\partial t=16wt-4t+4w^2-5w+4&=&(16w-4)t+w(4w-5)+4>0&,\end{array}$$
so by representing $H^*$ as a Taylor series in the variable $t$ 
we see again that 
$H^*>0$ for $(t,w)\in [2,\infty )\times [4,\infty)$. 
Next, for $(t,w)\in (0,1/2]\times (0,1/4]$ one can write 

$$H^*=t(4-2t-5w)+2w(4-w)+8wt^2+4w^2t>0~.$$
Finally, as $\partial H^*/\partial t=(16w-4)t+4w^2-5w+4$, where the polynomial 
$4w^2-5w+4$ has no real roots, one has $\partial H^*/\partial t>0$ in 
$[0.3,1/2]\times [4,6.71]$. On the other hand for $t=0.3$ the 
polynomial $H^*$ equals $w(7.22-0.8w)+1.02$ which is positive for 
$w\in [4,6.71]$. Hence $H^*>0$ in $[0.3,1/2]\times [4,6.71]$. 
\end{proof}

\begin{lm}\label{lma5caseE3}
The inequality $a_5\geq 0$ fails for 
$(t,w)\in [2,\infty )\times (0,1/4]\supset \Omega _1$.
\end{lm}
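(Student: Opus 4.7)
The plan is to compute $a_5$ explicitly after the substitution $s=(w+2t-4)/4$ made in Case D) and then to show its strict negativity on the closed region $[2,\infty)\times(0,1/4]$. First, I would expand the product $P=(x+1)^4(sx+1)^4(tx-1)^2(wx-1)$ symbolically (in MAPLE), extract the coefficient of $x^5$, and clear denominators to obtain $256\,a_5=F(t,w)$ as an integer polynomial in the two remaining parameters.

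Next I would organise $F$ as a polynomial in $w$ with coefficients polynomial in $t$,
$$F(t,w)=\sum_{k=0}^{4}c_k(t)\,w^k,$$
and begin with the boundary curve $w=0$. The quantity $c_0(t)=F(t,0)$ is a one-variable polynomial whose real roots can be listed numerically; the goal is to show $c_0(t)<0$ for all $t\geq 2$, exactly in the spirit of the treatment of the polynomials $C$, $D$, $a_{30}$ etc. in the earlier cases. Since $w$ is confined to the tiny interval $(0,1/4]$, the $w$-dependent corrections should be small, and a first attempt would be the crude triangle-inequality bound $|c_k(t)w^k|\leq |c_k(t)|/4^k$, reducing the problem to verifying
$$c_0(t)+\sum_{k=1}^{4}|c_k(t)|/4^k<0\quad\text{for every }t\geq 2,$$
once again a root-locating problem for a single polynomial in $t$.

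If that crude bound turns out to be too loose near the corner $(t,w)=(2,1/4)$, where $a_5$ is closest to zero, the fallback is to analyse $\partial F/\partial w$ on the region $[2,\infty)\times(0,1/4]$ in the style of Lemma~\ref{lmH}: if $\partial F/\partial w$ can be shown to have a fixed sign there (for instance by splitting it into manifestly signed pieces in $(t,w)$), then $F$ is monotone in $w$ on the region, and the inequality $F<0$ has to be verified only on the two boundary curves $w=0$ and $w=1/4$, each of which is a one-variable assertion about a polynomial in $t$ that MAPLE settles by listing its real roots. An alternative decomposition, analogous to the Taylor expansion in $t$ used in Case C) on $\Sigma_6$, is to expand $F$ about $w=0$ and show that all relevant derivatives $\partial^k F/\partial w^k$ at $w=0$ are negative for $t\geq 2$.

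The main obstacle I anticipate is purely computational: $F(t,w)$ is a dense polynomial with many monomials and no evident symmetry, so the naive absolute-value bound above may fail to be tight enough near the corner $t=2$, $w=1/4$. Finding a clean single-variable reduction — rather than a cascade of case splits — will likely require choosing the right grouping of terms of $F$, perhaps matching $c_0(t)$ against a specific combination of higher $c_k(t)$ so as to extract a visibly negative polynomial in $t$. This is the same kind of bookkeeping difficulty that forced the introduction of the subregions $\Sigma_3,\ldots,\Sigma_6$ in Case C), and I expect a similar, if smaller, subdivision of $[2,\infty)\times(0,1/4]$ to be necessary here.
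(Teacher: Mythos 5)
Your primary route is sound but genuinely different from the paper's. The paper also computes $a_5^*:=256\,a_5$ after the substitution $s=(w+2t-4)/4$, but instead of expanding in powers of $w$ it Taylor-expands in $t$ about the corner value $t=2$: the six coefficients $v_j:=\partial^j a_5^*/\partial t^j|_{t=2}$, $j=0,\ldots,5$, are explicit polynomials in $w$ that are all negative on $(0,1/4]$ (each is either a sum of negative terms or has a single harmless positive monomial, such as $w^5$ in $v_0$ or $384w$ in $v_4$), and since every power $(t-2)^j$ is nonnegative on $[2,\infty)$, the conclusion $a_5<0$ follows with no case analysis at all. Your expansion $F=\sum_k c_k(t)w^k$ (note the degree in $w$ is $5$, not $4$) buys a reduction to one-variable statements, but at a cost you underestimate: the bound $c_0(t)+\sum_k|c_k(t)|/4^k<0$ is not literally a root-location problem for a single polynomial, because of the absolute values; you must first fix the signs of the $c_k$ on $[2,\infty)$, and $c_1(t)=16(t-2)(t^3-22t^2+36t-24)$ changes sign near $t\approx 20.4$, forcing a split of the half-line into two intervals (on each of which the check does succeed, since $c_0(t)\approx -96t^5$ dominates). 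The same sign change shows that your last fallback is false as stated: it is not true that all derivatives $\partial^kF/\partial w^k|_{w=0}$ are negative for $t\geq 2$, precisely because $c_1>0$ for large $t$; for the same reason $\partial F/\partial w$ does not keep one sign on the whole region, so the monotonicity fallback would also need the subdivision you anticipated. In short, the coefficientwise-negativity trick works in one shot only in the variable $t$ about the base point $t=2$ --- which is exactly the paper's choice, and is why its proof needs no subdivision whatsoever.
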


\begin{proof}
The quantity $a_5^*:=256a_5$ equals

$$\begin{array}{l}
1536t+768w-1536t^2-384w^2-1536wt+768w^2t\\ 
+1280wt^2-32w^3t-416w^2t^2-384wt^3-16t^3w^2+16t^4w\\ 
-72t^2w^3-22tw^4-128w^3+512t^3+44w^4-64t^4-96t^5+w^5~.\end{array}$$
The values $v_j$ for $t=2$ of its partial derivatives 
$\partial ^ja_5^*/\partial t^j$, $j=0$, $\ldots$, $5$, equal respectively 

$$\begin{array}{lc}
v_0=-3072-640w^2-480w^3+w^5&,\\ 
v_1=-8192-512w-1088w^2-320w^3-22w^4&,\\ 
v_2=-15360-1280w-1024w^2-144w^3&,\\ v_3=-23040-1536w-96w^2&,\\ 
v_4=-24576+384w&,\\ v_5=-11520&.\end{array}$$
They are all negative for $w\in (0,1/4]$. Hence all coefficients 
of the Taylor series w.r.t. $t$ of the coefficient 
$a_5$ for $t=2$, $w\in (0,1/4]$,  
are negative and such is $a_5$ for 
$(t,w)\in [2,\infty )$ as well. 
\end{proof}

\begin{lm}\label{lm45}
The inequality $a_6\leq 0$ fails for 
$(t,w)\in (0,1/2]\times [6.71,\infty )\supset \Omega _2$ 
and for $(t,w)\in (0,0.3]\times [5,\infty )\supset \Omega _3^+$.
\end{lm}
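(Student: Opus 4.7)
The plan is to mirror the strategy used in Lemmas~\ref{lmH} and~\ref{lma5caseE3}: substitute $s=(w+2t-4)/4$ into $a_6$ to obtain $a_6^{\star}:=256a_6$ as an explicit polynomial in $(t,w)$, and then establish positivity on each of the two rectangles by a Taylor expansion of $a_6^{\star}$ about a carefully chosen base point in $t$, verifying the sign of each Taylor coefficient as a function of~$w$ on the relevant range.

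First I would compute $a_6^{\star}(t,w)$ by symbolic algebra (MAPLE, as used throughout the paper), obtaining a polynomial of low bidegree, and organise it as $a_6^{\star}=\sum_{j\ge 0}\mu_j(w)\,t^j/j!$, where $\mu_j(w):=(\partial^{j}a_6^{\star}/\partial t^{j})|_{t=0}$. For the rectangle $(0,1/2]\times[6.71,\infty)$ containing $\Omega_2$, the aim is to show that every $\mu_j(w)$ is nonnegative on $[6.71,\infty)$; then for $t\ge 0$ one has $a_6^{\star}(t,w)\ge\mu_0(w)$, and it remains only to verify $\mu_0(w)>0$ on $[6.71,\infty)$. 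Both verifications reduce to locating the real roots of a handful of one-variable polynomials, exactly as done with the trinomial arguments in the proofs of Lemmas~\ref{lmH} and~\ref{lmK}.

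For the rectangle $(0,0.3]\times[5,\infty)$ containing $\Omega_3^+$, the same Taylor-in-$t$ expansion is the natural choice since $t$ is small; positivity of the $\mu_j(w)$ now needs only to hold on $[5,\infty)$. Should some $\mu_j$ change sign on that half-line, I would split the $w$-interval at its real roots and, on each subinterval where the sign is unfavourable, either re-expand around $t=0.3$ (as was done in Lemma~\ref{lmH} on $[0.3,1/2]\times[4,6.71]$) or absorb the negative higher-order contributions into a dominating positive term by elementary bounds, in the spirit of the $a_4<0$ argument for Case~C) on $\Sigma_2$.

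The main obstacle, as elsewhere in this section, is purely computational rather than conceptual: each $\mu_j(w)$ is a polynomial of moderate degree whose real roots must be located numerically and shown to lie outside the relevant $w$-interval, and the choice of base point for the Taylor expansion on each sub-rectangle must be tuned so that all coefficients acquire a definite sign. Once that bookkeeping is done, the sign of $a_6$ on the two rectangles follows from the corresponding Taylor series with no further input.
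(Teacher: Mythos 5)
Your plan inverts the roles of the two variables, and the inversion kills the main step. You propose to expand $a_6^{\star}:=256\,a_6$ in powers of $t$ about $t=0$ and to show that every coefficient $\mu_j(w):=\partial^j a_6^{\star}/\partial t^j|_{t=0}$ is nonnegative on the relevant half-line in $w$. That claim is false. One has $\mu_1(w)=-2w^5-56w^4+400w^3-576w^2+384w-1536$, whose largest real root is $4$ and whose leading coefficient is negative, so $\mu_1(w)<0$ for every $w>4$, in particular on all of $[6.71,\infty)$ and $[5,\infty)$; likewise $\mu_2(w)=-2w^4-544w^3+1440w^2-768w+3840<0$ there, and $\mu_6=-16\cdot 6!<0$ identically. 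At $w=5$ the sign pattern of $(\mu_0,\ldots,\mu_6)$ is $(+,-,-,-,+,+,-)$, as can be read off the paper's own polynomial $u_0=-16t^6+64t^5+2120t^4-2840t^3-16625t^2-5266t+3534$. Indeed, Lemma~\ref{lmdecreasing} proves that $a_6$ is \emph{decreasing} in $t$ on $(0,1/2]\times[4,6.71]$ --- exactly the opposite of what nonnegativity of all Taylor-in-$t$ coefficients would force. So the inequality $a_6^{\star}(t,w)\geq \mu_0(w)$ on which your argument rests does not hold; for the rectangle containing $\Omega_2$ you offer no fallback at all, and for the one containing $\Omega_3^+$ only the unexecuted alternative of ``absorbing'' the negative terms, which would require a genuinely different two-variable dominance estimate (e.g. using that the coefficient of $w^5$ in $a_6^{\star}$ is $4-2t\geq 3$ for $t\leq 1/2$), not the coefficientwise positivity you describe.

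The missing idea is to expand in the \emph{unbounded} variable $w$ at the left endpoint of the $w$-range, keeping $t$ as the coefficient variable on its compact interval; this is what the paper does. Since $a_6$ has degree $5$ in $w$, there are only six derivatives $u_j:=256\,\partial^j a_6/\partial w^j$, and the paper verifies numerically that their values at $w=6.71$ (resp.\ $w=5$) are positive, as polynomials in $t$, for $t\in(0,1/2]$ (resp.\ $t\in(0,0.3]$) --- a finite root-location task. Positivity of all six Taylor coefficients at the base point then yields $a_6>0$ for every $w\geq 6.71$ (resp.\ $w\geq 5$), since $w-w_0\geq 0$ on the half-line. The structural rule your proposal misses is that the expansion point must be chosen in the variable whose range is a half-line, so that positivity of finitely many coefficients propagates for free, while the sign checks are confined to the variable with compact range.
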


Thus after Lemmas~\ref{lmH}, \ref{lma5caseE3} and \ref{lm45} 
there remains to prove that for $(t,w)\in \Omega _3^-$ 
the sign(s) of some (of the) coefficient(s) $a_j$ 
is/are not the one(s) prescribed by the sign pattern.

\begin{proof}[Proof of Lemma~\ref{lm45}]
One has 

$$\begin{array}{ccl}
256a_6&=&1024-768w-1536t-576w^2t+1920t^2+864w^2-352w^3\\ 
&&-1280t^3+800t^4-256t^5+26w^4+4w^5-16t^6+384wt\\ 
&&-384wt^2+400w^3t+720w^2t^2+448wt^3-352t^3w^2-256t^4w+40t^3w^3\\ 
&&+104t^4w^2+64t^5w-272t^2w^3-t^2w^4-56tw^4-2tw^5~.\end{array}$$
We list below the values of the functions 
$u_j:=256\, \partial ^ja_6/\partial w^j$, 
$j=0$, $\ldots$, $5$, for $w=6.71$. They are all positive for $t\in (0,1/2]$ 
(this can be checked numerically). From the Taylor series of 
$a_6$ for $w=6.71$ one concludes that $a_6>0$ for 
$(t,w)\in (0,1/2]\times [6.71,\infty )$. Here's the list: 

$$\begin{array}{ccl}
u_0&:=&-16t^6+173.44t^5+3764.7464t^4-2037.93476t^3\\ 
&&-52440.84297t^2-44774.66948t+35543.86077\\ 
u_1&:=&64t^5+1139.68t^4+1127.0520t^3-28669.71244t^2\\ 
&&-41261.71907t+35244.43996\\ 
u_2&:=&208t^4+906.40t^3-10051.0092t^2-27388.66364t+25772.93608\\ 
u_3&:=&240t^3-1793.04t^2-12021.1320t+12880.8240\\ 
u_4&:=&-24t^2-2954.40t+3844.80\\ u_5&:=&240(2-t)\end{array}$$
In the same way we consider the values for $w=5$ of these same functions, 
see the list below. 
One can check that they are all positive for $t\in (0,0.3]$ and 
by analogy we conclude that $a_6>0$ for 
$(t,w)\in (0,0.3]\times [5,\infty )$.

$$\begin{array}{ccl}
u_0&:=&-16t^6+64t^5+2120t^4-2840t^3-16625t^2-5266t+3534\\  
u_1&:=&64t^5+784t^4-72t^3-14084t^2-9626t+6972\\ 
u_2&:=&208t^4+496t^3-7020t^2-10952t+8968\\ 
u_3&:=&240t^3-1752t^2-7320t+7008\\ 
u_4&:=&-24t^2-2544t+3024\\ u_5&:=&240(2-t)\end{array}$$
\end{proof}

\begin{lm}\label{lmdecreasing}
For $(t,w)\in (0,1/2]\times [4,6.71]\supset \Omega _3^-$ 
the coefficient $a_6$ is a decreasing 
function in $t$. For $t=0$, $w\in [4,6.71]$ one has $a_6\geq 0$ with 
equality only for $w=4$. 
\end{lm}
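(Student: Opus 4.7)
The plan is to work directly with the explicit polynomial expression for $256\,a_6$ in $(t,w)$ that was written out in the proof of Lemma~\ref{lm45}, and to deduce both claims from elementary sign analysis on the rectangle.

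For the behaviour at $t=0$, I would substitute $t=0$ into the explicit formula to obtain
\[
256\,a_6(0,w)=4w^5+26w^4-352w^3+864w^2-768w+1024.
\]
A direct check shows $w=4$ is a root, so this factors as $(w-4)(4w^4+42w^3-184w^2+128w-256)$. To show the quartic factor is strictly positive on $[4,6.71]$, observe that its value at $w=4$ equals $1024$ and its derivative $16w^3+126w^2-368w+128$ takes the value $1696$ at $w=4$ and is increasing on $[4,\infty)$, since its own derivative $48w^2+252w-368$ is positive at $w=4$ and has positive leading coefficient. This yields $a_6(0,w)>0$ on $(4,6.71]$ and $a_6(0,4)=0$.

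For the monotonicity claim, I would differentiate the explicit formula to get $G(t,w):=256\,\partial a_6/\partial t=\sum_{j=0}^5 c_j(w)\,t^j$ and analyse the signs of the $c_j(w)$ on $[4,6.71]$. One finds $c_5=-96$, $c_4(w)=320(w-4)\geq 0$, and $c_3(w)=3200-1024w+416w^2>0$ (discriminant negative), while $c_0,c_1,c_2\leq 0$ throughout. The sign of $c_0$ is the most delicate because it vanishes at $w=4$; this is witnessed by the factorisation $c_0(w)=-2(w-4)(w^4+32w^3-72w^2-192)$, with the quartic factor positive on $[4,6.71]$ (value $960$ at $w=4$, derivative $4w^3+96w^2-144w$ positive there, second derivative $12w^2+192w-144$ also positive). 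To dispose of the two non-negative contributions, use $t\leq 1/2$ to bound $c_4(w)\,t^4\leq(c_4(w)/2)\,t^3$ and $c_5(w)\,t^5\leq 0$, reducing matters to showing that the cubic
\[
H(t,w):=c_0(w)+c_1(w)\,t+c_2(w)\,t^2+\bigl(c_3(w)+c_4(w)/2\bigr)\,t^3
\]
satisfies $H\leq 0$ on $[0,1/2]\times[4,6.71]$.

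To show $H\leq 0$, I would expand in a Taylor series about $w=4$, writing $H(t,w)=\sum_{k=0}^{5}(w-4)^k\,\partial_w^k H(t,4)/k!$, and verify that each coefficient $\partial_w^k H(t,4)$ is non-positive on $[0,1/2]$. For $k=0$ this is $H(t,4)=t(-11520-7680t+5760t^2)$, whose bracketed factor is strictly negative on $[0,1/2]$; for $k=1,\ldots,5$ the coefficients at $t=0$ take the values $-1920,-4864,-4896,-2304,-240$ and each remains non-positive on $[0,1/2]$ because its derivative in $t$ is, by direct inspection of its coefficients, negative on $[0,1/2]$. Since $(w-4)^k\geq 0$ on $[4,6.71]$, all terms in the Taylor expansion are non-positive, giving $H\leq 0$ on the rectangle and hence $\partial a_6/\partial t\leq 0$. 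The main obstacle is the double vanishing of $G$ at the corner $(t,w)=(0,4)$: the bound must be tight there, which is precisely what forces the Taylor centre to be $w=4$, and the bookkeeping of the two positive coefficients $c_3,c_4$ must be arranged (via the $t\leq 1/2$ estimate above) so that it does not spoil the sign of any individual Taylor coefficient $\partial_w^k H(t,4)$.
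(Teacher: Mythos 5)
Your proof is correct, and I verified its computations: the factorisation $256\,a_6(0,w)=(w-4)(4w^4+42w^3-184w^2+128w-256)$, the coefficients $c_0,\ldots,c_5$ (which are the paper's $\eta_{j+1}/j!$), the factorisation $c_0=-2(w-4)(w^4+32w^3-72w^2-192)$, the corner values $c_1(4)=-11520$, $c_2(4)=-7680$, $c_3(4)+c_4(4)/2=5760$, and the five polynomials $\partial_w^kH(t,4)$, $k=1,\ldots,5$, namely $-1920-15872t-1344t^2+2464t^3$, $-4864-10560t+768t^2+832t^3$, $-4896-3456t+720t^2$, $-2304-48t$ and $-240$, each indeed nonpositive on $[0,1/2]$. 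Your strategy coincides with the paper's up to the last step: the paper likewise expands $256\,\partial a_6/\partial t$ in powers of $t$ at $t=0$, records the signs of the coefficients on $[4,6.71]$, and uses $t\le 1/2$ to tame the two nonnegative ones. The divergence is in how negativity is then certified. The paper discards \emph{all} nonpositive terms (including your $c_0$) and sets $t=1/2$ inside the bracket, reducing everything to the single univariate polynomial $-2w^4-544w^3+1544w^2-984w+4480$, whose negativity on $[4,6.71]$ — like the sign of $a_6|_{t=0}$ — it certifies by numerically computed roots. You instead keep $c_0,c_1,c_2$ in the majorant $H$ and run a second Taylor expansion in powers of $(w-4)$, checking that all six coefficients are nonpositive on $[0,1/2]$; at $t=0$ you factor out $(w-4)$ by hand. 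What your route buys is the complete elimination of numerical root-finding — every check is an exact evaluation or factorisation, and centering at $(t,w)=(0,4)$ correctly isolates the corner where the estimate is tight — at the cost of heavier bookkeeping; the paper's route is shorter but leans on computer-found roots, as it does throughout.

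Three small repairs you should make. (i) For a quadratic, ``positive at $w=4$ with positive leading coefficient'' does not imply positivity on $[4,\infty)$, since $4$ could lie to the left of both roots; for $48w^2+252w-368$ and $12w^2+192w-144$ note instead that the vertex abscissa is negative, so both are increasing on $[4,\infty)$ and their values at $w=4$ (namely $1408$ and $816$) are minima there. (ii) For $k=5$ the Taylor coefficient is the constant $-240$, whose $t$-derivative is zero, not negative; just call it a negative constant. (iii) Your conclusion $\partial a_6/\partial t\le 0$ literally yields only that $a_6$ is nonincreasing; strict decrease is free if you retain the discarded term, since $256\,\partial a_6/\partial t\le H-96t^5\le -96t^5<0$ for $t\in(0,1/2]$. (Also, the assertions $c_1\le 0$ and $c_2\le 0$ are true but never justified — and never used, since your reduction to $H$ needs only $c_4\ge 0$ and $c_5\le 0$ — so either prove them or drop them.)
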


\begin{proof}
The second claim of the lemma follows from 

$$256\,a_6|_{t=0}=4w^5+26w^4-352w^3+864w^2-768w+1024$$
whose real roots are $-13.978\ldots$, $3.110\ldots$ and $4$. To prove 
the first claim we list the derivatives 
$\eta _j:=256\, \partial ^ja_6/\partial t^j|_{t=0}$, $j=1$, $\ldots$, $6$, 
and their real roots 
($\eta _4$ has no real roots):

$$\begin{array}{ccl}
\eta _1&:=&-2w^5-56w^4+400w^3-576w^2+384w-1536\\ &&-34.115\ldots ~~~,~~~
2.782\ldots ~~~,~~~4\\ 
\eta _2&:=&-2w^4-544w^3+1440w^2-768w+3840\\ 
&&-274.626\ldots ~~~,~~~2.948\ldots \\ 
\eta _3&:=&240w^3-2112w^2+2688w-7680\\ &&7.894\ldots \\ 
\eta _4&:=&2496w^2-6144w+19200\\ \eta _5&:=&7680w-30720\\ &&4\\ 
\eta _6&:=&-11520~.\end{array}$$
As we see, for $w\in [4,6.71]$ one has $\eta _1\leq 0$, 
$\eta _2<0$, $\eta _3<0$, $\eta _4>0$, $\eta _5\geq 0$ and $\eta _6<0$. 
One can majorize the Taylor series for $t=0$ of 

$$256\, \partial a_6/\partial t=\eta _1+t(\eta _2+t\eta _3/2+
t^2\eta _4/6+t^3\eta _5/24+t^4\eta _6/120)$$ 
by omitting the nonpositive terms $\eta _1$, $t^2\eta _3/2$ and $t^5\eta _6/120$ 
and by giving to $t$ inside the brackets its maximal value $1/2$. 
This gives the polynomial 

$$t(\eta _2+\eta _4/24+\eta _5/192)=t(-2w^4-544w^3+1544w^2-984w+4480)~,$$
with real roots $-274.815\ldots$ and $3.083\ldots$, hence negative 
for $w\in [4,6.71]$. 
\end{proof}

\begin{lm}\label{lmtau}
Consider the quantity $H^*$ (see (\ref{H})) 
as a polynomial in $t$. For $w\in [4,6.71]$ 
it has a single root $\tau (w)\in [0,1/2]$, 
$$\tau =(-4w^2+5w-4+\sqrt{(4w^2+19w+4)(4w^2-13w+4)})/4(4w-1)~.$$ 
One has $H^*<0$ (hence $a_{10}<0$) for $t<\tau$ and 
$H^*>0$, $a_{10}>0$ for $t>\tau$. The equality $\tau =0$ takes place only 
for $w=4$. 
\end{lm}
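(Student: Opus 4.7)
The plan is to view $H^*$ from (\ref{H}) as a quadratic in $t$ with $w$ as a parameter,
\[
H^* \;=\; (8w-2)\,t^{2} + (4w^{2}-5w+4)\,t + 2w(4-w).
\]
For $w\in[4,6.71]$ the leading coefficient $8w-2$ is strictly positive, while the constant term $2w(4-w)$ is non-positive and vanishes only at $w=4$. By Vieta's formulas the product of the two real roots equals $w(4-w)/(4w-1)\leq 0$, so the two roots have opposite signs (exactly one of them is $0$, and this happens precisely when $w=4$). I denote the non-negative root by $\tau(w)$; the equivalence $\tau(w)=0\iff w=4$ is then immediate.

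Next I compute the discriminant $\Delta(w)=b^{2}-4ac$:
\[
\Delta(w) \;=\; (4w^{2}-5w+4)^{2}-16w(4w-1)(4-w) \;=\; 16w^{4}+24w^{3}-215w^{2}+24w+16.
\]
A direct expansion confirms the factorisation
\[
\Delta(w) \;=\; (4w^{2}+19w+4)(4w^{2}-13w+4),
\]
and both factors are strictly positive on $[4,6.71]$: the first is positive for all $w>0$, and the second equals $16$ at $w=4$ and is increasing for $w\geq 13/8$. The quadratic formula applied to the larger root then yields exactly the closed-form expression for $\tau(w)$ stated in the lemma. The only step requiring any real care is verifying the factorisation of $\Delta$; it is mechanical but slightly lengthy, and the two quadratic factors are what one is led to guess after numerical inspection of the roots of $\Delta$.

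To bound $\tau$ above by $1/2$, substitute $t=1/2$ into $H^*$:
\[
H^{*}\!\left(\tfrac{1}{2}\right) \;=\; 2w-\tfrac{1}{2}+2w^{2}-\tfrac{5w}{2}+2+8w-2w^{2} \;=\; \tfrac{15w+3}{2} \;>\; 0.
\]
Since $H^*$ is a convex parabola in $t$ with $H^*(0)\leq 0<H^*(1/2)$, its larger root $\tau$ lies in $[0,1/2]$, and it is the only root of $H^*$ in that interval because the other root is non-positive. Convexity together with the location of the two roots yields $H^*<0$ for $0<t<\tau$ and $H^*>0$ for $t>\tau$. The sign statement for $a_{10}$ is then inherited from that of $H^*$ through the identity $256\,a_{10}=t(w+2t-4)^{3}H^{*}$ recalled at the beginning of Case D), since for $t>0$ and $w\geq 4$ one has $t(w+2t-4)^{3}>0$. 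I do not foresee any genuine obstacle: the whole argument reduces to the factorisation of $\Delta$ plus the one-line evaluation $H^*(1/2)>0$.
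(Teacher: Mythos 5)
Your proof is correct and takes essentially the same route as the paper's (much terser) proof: both view $H^*$ as an upward parabola in $t$ and deduce the root configuration from the sign of the constant term, the paper via $H^*|_{t=0}=2w(4-w)\leq 0$ with equality only at $w=4$, you via the equivalent Vieta product argument. The remaining details the paper dismisses as ``to be checked directly'' --- the discriminant factorisation $(4w^2+19w+4)(4w^2-13w+4)$, the quadratic formula for $\tau$, and the bound $\tau\leq 1/2$ via $H^*(1/2)=(15w+3)/2>0$ --- are exactly the computations you supply.
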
  

\begin{proof}
The statements about $\tau$ are to be checked directly. The signs of $H^*$ 
follow easily from $H^*|_{t=0}=2w(4-w)\leq 0$ with equality only for $w=4$.
\end{proof}

\begin{lm}\label{lmtaubis}
Consider $a_6$ as a function in $(t,w)$. 
Then with $\tau$ as defined in Lemma~\ref{lmtau} one has $a_6(\tau ,w)\geq 0$ 
for $w\in [4,5]$ with equality only for $w=4$.
\end{lm}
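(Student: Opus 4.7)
\emph{Proof plan.} Set $g(w) := a_6(\tau(w), w)$ on $[4,5]$; the goal is $g(w) \geq 0$ with equality only at $w = 4$. First I would establish the boundary value and the initial slope. By Lemma~\ref{lmtau}, $\tau(4) = 0$, while by Lemma~\ref{lmdecreasing} the polynomial $256\,a_6|_{t=0}$ has $w = 4$ as a real root, so $g(4) = 0$. The chain rule gives $g'(w) = \partial_t a_6(\tau,w)\,\tau'(w) + \partial_w a_6(\tau,w)$. Evaluating at $(t,w) = (0,4)$: the polynomial $256\,\partial_t a_6|_{t=0}$ (the quantity $\eta_1$ in the proof of Lemma~\ref{lmdecreasing}) vanishes since $w = 4$ is among its listed real roots; and $\tau'(4) = 1/6$ is finite, as implicit differentiation of $H^*(\tau,w) = 0$ shows. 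Hence $g'(4) = \partial_w a_6(0,4)$, and a direct calculation using the formula for $256\,a_6|_{t=0}$ in Lemma~\ref{lmdecreasing} gives $g'(4) = +4 > 0$. So $g > 0$ on some right-neighbourhood of $w = 4$.

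Next, I would pass to an explicit algebraic expression for $g$. Since $H^*$ is quadratic in $t$, polynomial division of $a_6(t,w)$ by $H^*(t,w)$ in the variable $t$ yields
\begin{equation*}
(8w-2)^{5}\,a_6(t,w) = Q(t,w)\,H^*(t,w) + A(w) + B(w)\,t
\end{equation*}
with $Q \in \mathbb{Q}[w][t]$ and $A, B \in \mathbb{Q}[w]$ readily computable in MAPLE. Substituting $t = \tau(w)$ and the closed form of $\tau$ from Lemma~\ref{lmtau} then gives
\begin{equation*}
4(4w-1)(8w-2)^{5}\,g(w) = \bigl[4(4w-1)A(w) - (4w^2-5w+4)B(w)\bigr] + B(w)\sqrt{D(w)},
\end{equation*}
where $D(w) = (4w^2+19w+4)(4w^2-13w+4) > 0$ on $[4,5]$.

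The final step is to rule out further zeros of $g$ on $(4,5]$. Viewing $g$ inside the quadratic extension $\mathbb{Q}(w)[\sqrt{D(w)}]$, pairing $\tau$ with the conjugate root $\bar\tau$ of $H^* = 0$ yields the ``norm''
\begin{equation*}
(8w-2)^{11}\,g(w)\bar{g}(w) = (8w-2)A(w)^2 - (4w^2-5w+4)A(w)B(w) + 2w(4-w)B(w)^2,
\end{equation*}
a polynomial in $w$; its real zeros in $[4,5]$ can be located by factorization in MAPLE. Checking that each such zero is attributable to $g$ itself (not to the spurious factor $\bar g$), together with $g(4) = 0$, $g'(4) > 0$ and continuity, forces $g > 0$ on $(4,5]$. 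The main obstacle is the bookkeeping needed to separate zeros of $g$ from those of $\bar g$ within the norm polynomial; a single sample evaluation of $g$ in $(4,5]$ fixes the sign on each component between norm zeros, in the spirit of the MAPLE-assisted arguments already used throughout Sections~\ref{prelim}--\ref{prlmbis}.
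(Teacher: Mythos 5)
Your proposal is correct, and its algebraic core coincides with the paper's: both eliminate the radical by pairing $\tau$ with the conjugate root $\bar{\tau}$ of $H^*$ and studying the resulting norm polynomial, whose real roots are then located by computer algebra. (Your preliminary numerics check out: $\tau(4)=0$, $\tau'(4)=1/6$, $\partial_t a_6(0,4)=0$ because $w=4$ is a root of $\eta_1$, and $g'(4)=\partial_w a_6(0,4)=4$.) Where you genuinely differ is in resolving the sign ambiguity of the norm, since $g\bar{g}>0$ alone does not decide the sign of $g$. The paper computes $256\,a_6(\tau ,w)=(wC_0+(4w^2+19w+4)C_1Y)/(4w-1)^6$ with $Y=\sqrt{D}$, shows by root location that $C_0>0$ and $C_1<0$ on $[4,5]$, so the conjugate factor $wC_0-(4w^2+19w+4)C_1Y$ is positive outright; hence the sign of $g$ equals the sign of the norm, which factors as $128(w-4)C_2(4w-1)^6$ with $C_2>0$ on $[4,5]$, giving the inequality and the equality case at $w=4$ simultaneously. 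You instead anchor the sign locally at $w=4$ via $g(4)=0$, $g'(4)>0$ and propagate it by continuity through the zero-free set of the norm; since the norm in fact has no zeros in $(4,5]$ (this is exactly what the paper's factorization with $C_2>0$ shows), your attribution and sample-evaluation contingency is vacuous and your argument closes. Your route buys independence from the sign analysis of the two coefficients $A$ and $B$ (the analogue of the paper's $C_0$, $C_1$ step), and your pseudo-division $(8w-2)^5a_6=QH^*+A+Bt$ is a tidier way to produce the closed form than brute substitution; the cost is the extra local analysis at $w=4$ and a bookkeeping step (separating zeros of $g$ from those of $\bar{g}$) that the paper's conjugate-positivity observation makes unnecessary, which is why the paper's finish is the more economical of the two.
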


\begin{rem}
{\rm The lemma implies that at least one of the inequalities $a_6<0$ 
and $a_{10}<0$ fails in $\Omega _3^-$. Indeed, for $t\geq \tau$ this is 
$a_{10}<0$ (see Lemma~\ref{lmtau}), for $t<\tau$ this is $a_6<0$ 
(see Lemmas~\ref{lmdecreasing} and \ref{lmtaubis}).}
\end{rem} 

\begin{proof}[Proof of Lemma~\ref{lmtaubis}]
Set $Y:=\sqrt{(4w^2+19w+4)(4w^2-13w+4)}$. 
One checks numerically that 

$$\begin{array}{ccl}
256\, a_6(\tau ,w)&=&(wC_0+(4w^2+19w+4)C_1Y)/(4w-1)^6~~~{\rm ,~~where}\\ \\ 
C_0&:=&6144w^{10}-6144w^9-224512w^8+2284416w^7\\ &&-6369192w^6+6270368w^5 
-3922014w^4
+1993629w^3\\ &&-860272w^2+234384w-25728\\ 
C_1&:=&384w^7-2496w^6+632w^5-4064w^4\\ &&+4730w^3-1355w^2-136w+64~.\end{array}$$
(With $t=\tau (w)$, $a_6$ becomes a degree $6$ polynomial in $Y$ with 
coefficients in $\mathbb{R}(t)$. Using the fact that $Y^2$ is a polynomial 
in $t$ one obtains the above form of $256\, a_6$.) 
All real roots of $C_0$ are smaller than $4$, so $C_0>0$ for $w\in [4,5]$. 
The real roots of $C_1$ equal $-0.192\ldots$, $0.269\ldots$ and $6.455\ldots$, 
so $C_1$ is negative for $w\in [4,5]$. Hence $wC_0-(4w^2+19w+4)C_1Y>0$ 
and the inequality $wC_0+(4w^2+19w+4)C_1Y>0$ 
is equivalent to $w^2C_0^2-(4w^2+19w+4)^2C_1^2Y^2>0$. 
The left-hand side of the last 
inequality equals $128(w-4)C_2(4w-1)^6$ with 

$$\begin{array}{ccl}
C_2&:=&55296w^{12}+82944w^{11}-1638912w^{10}+6310368w^9\\ 
&&-13847224w^8+10530920w^7-8336710w^6+5520431w^5\\ 
&&-2256796w^4+758480w^3-378304w^2+63488w+2048~.\end{array}$$
The largest real root of $C_2$ equals $3.045\ldots <4$, so $C_2>0$ for 
$w\in [4,5]$ and the lemma is proved.
\end{proof}
\end{proof}

\section{Proofs of Lemmas~\protect\ref{lmH4ter}, 
\protect\ref{lmH5} and \protect\ref{lmH6}
\protect\label{prlmter}}

\begin{proof}[Proof of Lemma~\ref{lmH4ter}:]
\begin{nota}\label{notaP}
{\rm If $\zeta _1$, $\zeta _2$, $\ldots$, $\zeta _k$ are distinct roots 
of the polynomial $P$ (not necessarily simple), then by $P_{\zeta _1}$, 
$P_{\zeta _1,\zeta _2}$, $\ldots$, $P_{\zeta _1,\zeta _2,\ldots ,\zeta _k}$ we denote the 
polynomials $P/(x-\zeta _1)$, $P/(x-\zeta _1)(x-\zeta _2)$, $\ldots$, 
$P/(x-\zeta _1)(x-\zeta _2)\ldots (x-\zeta _k)$.}
\end{nota}

Denote by $u$, $v$, $w$ and $t$ the four distinct roots of $P$ (all nonzero). 
Hence $P=(x-u)^m(x-v)^n(x-w)^p(x-t)^q$, $m+n+p+q=11$. 
For $j=1$, $5$ or $6$ we show that the Jacobian $3\times 4$-matrix 
$J:=(\partial (a_{10},a_9,a_j)/\partial (u,v,w,t))^t$ 
(where $a_{10}$, $a_9$, $a_j$ are the corresponding coefficients of $P$ 
expressed as functions of $(u,v,w,t)$) is of rank $3$. 
(The entry in position $(2,3)$ of $J$ is $\partial a_9/\partial w$.) Hence 
one can vary the values of $(u,v,w,t)$ in such a way that $a_{10}$ and $a_j$ 
remain fixed (the value of $a_{10}$ being $-1$) and $a_9$ takes all possible 
nearby values. Hence the polynomial is not $a_9$-maximal. 

The columns of $J$ are defined by the coefficients of the polynomials 
$-mP_u=\partial P/\partial u$, $-nP_v$, $-pP_w$ and $-qP_t$. 
By abuse of language 
we say that the linear space $\mathcal{F}$ spanned 
by the columns of $J$ is generated by the polynomials 
$P_u$, $P_v$, $P_w$ and $P_t$. As $P_{u,v}=(P_u-P_v)/(v-u)$, 
$P_{u,w}=(P_u-P_w)/(w-u)$ and $P_{u,t}=(P_u-P_t)/(t-u)$, 
one can choose as generators of $\mathcal{F}$ 
the quadruple ($P_u$, $P_{u,v}$, $P_{u,w}$, $P_{u,t}$); in the same 
way ($P_u$, $P_{u,v}$, $P_{u,v,w}$, $P_{u,v,t}$) or 
($P_u$, $P_{u,v}$, $P_{u,v,w}$, $P_{u,v,w,t}$) (the latter polynomials 
are of respective 
degrees $10$, $9$, $8$ and $7$). As $(x-t)P_{u,v,w,t}=P_{u,v,w}$, 
$(x-w)P_{u,v}=P_{u,v,w}$ etc. one can choose as generators the quadruple 
($x^3P_{u,v,w,t}$, $x^2P_{u,v,w,t}$, $xP_{u,v,w,t}$, $P_{u,v,w,t}$). 
Set $P_{u,v,w,t}:=x^7+Ax^6+\cdots +G$. The coefficients 
of $x^{10}$, $x^9$ and $x^6$ of the last quadruple define the matrix 
$J^*:=\left( \begin{array}{cccc}1&0&0&0 \\A&1&0&0 \\ D&C&B&A\end{array}
\right)$. 
Its columns span the space $\mathcal{F}$ hence rank$\, J^*=$rank$\, J$. 
As at least one of 
the coefficients $B$ and $A$ is nonzero (Lemma~\ref{lmhyp}) one has 
rank$\, J^*=3$ and the lemma follows (for the case $j=6$). In the 
cases $j=5$ and 
$j=1$ the last row of $J^*$ equals respectively $(~E~D~C~B~)$ and 
$(~0~0~G~F~)$ and in the same way rank$\, J^*=3$.

\end{proof}

\begin{proof}[Proof of Lemma~\ref{lmH5}:]
We are using Notation~\ref{notaP} and the method of proof of 
Lemma~\ref{lmH4ter}. Denote by $u$, $v$, $w$, $t$, $h$ the 
five distinct real roots of $P$ (not necessarily simple). 
Thus using Lemma~\ref{lmHtriple} one can assume that

\begin{equation}\label{possible}
P=(x+u)^{\ell}(x+v)^m(x+w)^n(x-t)^2(x-h)~~~,~~~u, v, w, t, h > 0~~~,~~~
\ell +m+n=8~.\end{equation}
Set 
$J:=(\partial (a_{10},a_9,a_j,a_1)/\partial (u,v,w,t,h))^t$, $j=5$ or $6$. 
The columns of $J$ span a linear space $\mathcal{L}$ defined by 
analogy with the space $\mathcal{F}$ of the proof of Lemma~\ref{lmH4ter}, 
but spanned by $4$-vector-columns.

Set $P_{u,v,w,t,h}:=x^6+ax^5+bx^4+cx^3+dx^2+fx+g$. Consider the vector-column 
$$(0,0,0,0,1,a,b,c,d,f,g)^t~.$$
The similar vector-columns defined after the polynomials $x^sP_{u,v,w,t,h}$, 
$s\leq 4$, are obtained from this one by successive shifts 
by one position upward. To obtain generators of $\mathcal{L}$ one has 
to restrict these vector-columns to the rows corresponding to $x^{10}$ 
(first), $x^9$ (second), $x^j$ ($(11-j)$th) and $x$ (tenth row). 

Further we assume that $a_1=0$. 
If this is not the case, then at most one of the 
conditions $a_5=0$ and $a_6=0$ is fulfilled and the proof of the lemma can be 
finished by analogy with the proof of Lemma~\ref{lmH4ter}. 

Consider first the case $j=6$. 
Hence the rank of $J$ is the same as the 
rank of the matrix 

$$M:=\left( \begin{array}{ccccc}1&0&0&0&0\\ a&1&0&0&0\\ d&c&b&a&1\\ 
0&0&0&g&f\end{array}\right) ~~~
\begin{array}{c}x^{10}\\ x^9\\ x^6\\ x\end{array}~.$$
One has rank$\, M=2+$rank$\, N$, where 
$N=\left( \begin{array}{ccc}b&a&1\\ 0&g&f\end{array}\right)$. Given that 
$g\neq 0$, one can have rank$\, N<2$ only if $b=0$ and $af=g$. We show that the 
condition $b=0$ leads to the contradiction $a_{10}>0$. 
We set $u=1$ to reduce the number 
of parameters, so we require to hold only the inequality $a_{10}<0$, 
but not the equality $a_{10}=-1$. We have to consider the following cases for 
the values of the triple $(\ell ,m,n)$ 
(see (\ref{possible})): 

\noindent 1) $(6,1,1)$, 2) $(5,2,1)$, 3) $(4,3,1)$, 4) $(4,2,2)$ 
and 5) $(3,3,2)$. Notice that 

$$P_{1,v,w,t,h}=(x+1)^{\ell -1}(x+v)^{m-1}(x+w)^{n-1}(x-t)~.$$
In case 1) one has $b=10-5t$, so $t=2$. For $t=2$ one has 
$a_1=4vw-20vwh-4hv-4hw$ and the condition $a_1=0$ yields 
$h=h_1:=vw/(5vw+v+w)<1/5$. 
Notice that $a_{10}=2+v+w-h$ which for $h=h_1$ is positive -- a contradiction. 

In case 2) we obtain $b=6u^2+4uv-4ut-tv$ hence $t=t_2:=2(3+2v)/(4+v)$. 
One has $a_1=-tv(-vwt-2vwh+thv+5thvw+2thw)$ and for $t=t_2$ the 
condition $a_1=0$ gives 

$$h=h_2:=vw(3+2v)/(9v^2w+3v+2v^2+15vw+6w)<w~.$$
Observe that $a_{10}=5+2v-2t+(w-h)>5+2v-2t$. However for $t=t_2$ one has 
$5+2v-2t_2=(8+5v+2v^2)/(4+v)>0$.

In case 3) one gets $b=3+6v+v^2-3t-2tv=0$, so $t=t_3:=(3+6v+v^2)/(3+2v)$. 
As $a_1=-tv^2(-vwt-2vwh+thv+4thwv+3thw)=0$, for $t=t_3$ one obtains 

$$h=h_3:=vw(3+6v+v^2)/(24vw+23v^2w+3v+6v^2+v^3+4wv^3+9w)<w~.$$
One has $a_{10}=4+3v-2t+(w-h)>4+3v-2t$. For $t=t_3$ one checks directly 
that 

$$4+3v-2t_3=(6+5v+4v^2)/(3+2v)>0~~~,~~~{\rm i.e.}~~a_1>0~.$$ 

In case 4) one has $b=3+3v+3w+vw-3t-tv-tw$, therefore 
$t=t_4:=(3+3v+3w+vw)/(3+v+w)$. As $a_1=-tvw(-vwt-2vwh+4thwv+2thv+2thw)$, 
for $t=t_4$ it follows from $a_1=0$ that 

$$h=h_4:=\frac{vw(3+3v+3w+vw)}{2(9vw+6v^2w+6vw^2+2v^2w^2+3v+3v^2+3w+3w^2)}$$
which is $<w/2$. One has $a_{10}=4+2v+2w-2t-h$ which for $h=h_4$ and $t=t_4$ is 

$$>4+2v+3w/2-2t_4=(1/2)(12+8v+5w+4v^2+3vw+3w^2)/(3+v+w)>0~.$$
In case 5) one has $b=1+4v+v^2+2w+2vw-2t-2tv-tw$, therefore 

$$t=t_5:=(1+4v+v^2+2w+2vw)/(2+2v+w)~.$$ 
As $a_1=-tv^2w(-vwt-2vwh+3thwv+2thv+3thw)$, the condition $a_1=0$ yields 

$$h=h_5:=\frac{vw(1+4v+v^2+2w+2vw)}
{15vw+15v^2w+10vw^2+3wv^3+6v^2w^2+2v+8v^2+2v^3+3w+6w^2}$$
which is $<w/2$. One has $a_{10}=3+3v+2w-2t-h$ which for $t=t_5$, $h=h_5$ is 

$$>3+3v+3w/2-2t_5=(1/2)(8+8v+4w+8v^2+4vw+3w^2)/(2+2v+w)>0~.$$

Now consider the case $j=5$. The matrices $M$ and $N$ equal respectively 

$$M:=\left( \begin{array}{ccccc}1&0&0&0&0\\ a&1&0&0&0\\ f&d&c&b&a\\ 
0&0&0&g&f\end{array}\right) ~~~~,~~~
N=\left( \begin{array}{ccc}c&b&a\\ 0&g&f\end{array}\right) ~.$$
One has rank$\, N<2$ only for $c=0$ and $bf=ag$. Similarly to the case $j=6$ 
we show that the equality $c=0$ leads to the contradiction $a_{10}>0$. 
We define the cases 
1) -- 5) in the same way as above.

In case 1) one has $c=10-10t$, so $t=1$. As $a_1=vw-4vwh-hv-hw$, the equality 
$a_1=0$ implies $h=h^1:=vw/(4vw+v+w)<1/4$. One has $a_{10}=4+v+w-h$ which for 
$h=h^1$ is positive -- a contradiction.

In case 2) one gets $c=-2u(-2u^2-3uv+3ut+2tv)$, so $c=0$ implies 
$t=t^2:=(2+3v)/(3+2v)$. From $a_1=-kv(-vwk-2vwh+thv+5thwv+2thw)=0$ one gets 
(for $t=t^2$) 

$$h=h^2:=vw(2+3v)/(11v^2w+2v+3v^2+10vw+4w)<w~.$$
From $a_{10}=5+2v+w-2t-h$ one sees that for $h=h^2$, $t=t^2$ it is true that 

$$a_{10}>5+2v-2t^2=(11+10v+4v^2)/(3+2v)>0~.$$

In case 3) one obtains $c=1+6v+3v^2-3t-6tv-v^2t$, so 
$t=t^3:=(1+6v+3v^2)/(3+6v+v^2)$. The condition 
$a_1=-tv^2(-vwt-2vwh+thv+4thwv+3thw)=0$ with $t=t^3$ implies 

$$h=h^3:=vw(1+6v+3v^2)/(16vw+21v^2w+10wv^3+v+6v^2+3v^3+3w)<w~.$$
But then from $a_{10}=4+3v+w-2t-h$ with $t=t^3$, $h=h^3$ follows 

$$a_{10}>4+3v-2t^3=(10+21v+16v^2+3v^3)/(3+6v+v^2)>0~.$$

In case 4) one has $c=1+3v+3w+3vw-3t-3tv-3tw-vwt$, so $c=0$ implies 
$t=t^4:=(1+3v+3w+3vw)/(3+3v+3w+vw)$. For $t=t^4$ the condition 
$a_1=-tvw(-vwt-2vwh+4thwv+2thv+2thw)=0$ implies 

$$h=h^4:=(1/2)vw(1+3v+3w+3vw)/(5vw+6v^2w+6vw^2+5v^2w^2+v+3v^2+w+3w^2)$$
which is $<w/2$. Thus $a_{10}=4+2v+2w-2t-h$ with $t=t^4$, $h=h^4$ implies 

$$\begin{array}{ccl}a_{10}&>&4+2v+3w/2-2t^4\\ 
&=&\displaystyle{\frac{20+24v+21w+17vw+12v^2+4v^2w+9w^2+3vw^2}
{2(3+3v+3w+vw)}>0~.}\end{array}$$

In case 5) we get $c=2v+2v^2+w+4vw+v^2w-t-4tv-v^2t-2tw-2vwt$ and $c=0$ implies 

$$t=t^5:=(2v+2v^2+w+4vw+v^2w)/(1+4v+v^2+2w+2vw)~.$$
For $t=t^5$ the equalities $a_1=-tv^2w(-vwt-2vwh+3thwv+2thv+3thw)=0$ yield 

$$h=h^5:=\frac{vw(2v+2v^2+w+4vw+v^2w)}
{6vw+12v^2w+6wv^3+11vw^2+11v^2w^2+3w^2v^3+4v^2+4v^3+3w^2}$$
which is $<w/2$. 
Hence $a_{10}=3+3v+2w-2t-h$ with $t=t^5$, $h=h^5$ implies 

$$\begin{array}{ccl}a_{10}&>&3+3v+3w/2-2t^5\\ 
&=&\displaystyle{\frac{6+22v+22v^2+11w+20vw+6v^3+11v^2w+6w^2+6vw^2}
{2(1+4v+v^2+2w+2vw)}>0~.}\end{array}$$

\end{proof}

\begin{proof}[Proof of Lemma~\ref{lmH6}:]
We use the same ideas and notation as in the proof of Lemma~\ref{lmH5}. 
Six of the six or more real roots of $P$ are denoted by $(u,v,w,t,h,q)$. 
The space $\mathcal{L}$ is defined by analogy with the one of the 
proof of Lemma~\ref{lmH5}. 
The Jacobian matrix $J$ is of the form 

$$J:=(\partial (a_{10},a_9,a_j,a_1)/\partial (u,v,w,t,h,q))^t~.$$ 
Set $P_{u,v,w,t,h,q}:=x^5+ax^4+bx^3+cx^2+dx+f$ and consider  
the vector-column  

$$(0,0,0,0,0,1,a,b,c,d,f)^t~.$$
Its successive shifts by one position upward correspond to the polynomials 
$x^sP_{u,v,w,t,h,q}$, $s\leq 5$. In the case $j=6$ 
the matrices $M$ and $N$ 
look like this:

$$M=\left( \begin{array}{cccccc}1&0&0&0&0&0\\ a&1&0&0&0&0\\ 
d&c&b&a&1&0\\ 0&0&0&0&f&d\end{array}\right)~~~\, {\rm and}~~~\, 
N=\left( \begin{array}{cccc}b&a&1&0\\ 0&0&f&d\end{array}\right)~.$$
One has rank$\, M=2+$rank$\, N$ and rank$\, N=2$ 
because $f\neq 0$ and at least one of 
the two coefficients $b$ and $a$ is nonzero (Lemma~\ref{lmhyp}). Hence 
rank$\, M=4$ and the lemma is proved by analogy with Lemmas~\ref{lmH4ter} and 
\ref{lmH5}. 
In the case $j=5$ the third row of $M$ equals $(~f~d~c~b~a~1~)$, the first 
row of $N$ equals $(~c~b~a~1~)$, at least one of the two coefficients $c$ 
and $b$ is nonzero and again rank$\, M=4$.

\end{proof}

\end{document}